\documentclass[12pt]{amsart}
\numberwithin{equation}{section}
\newtheorem{theorem}{Theorem}[section]
\newtheorem{lemma}[theorem]{Lemma}
\newtheorem{corollary}[theorem]{Corollary}
\newtheorem{proposition}[theorem]{Proposition}
\newtheorem{conjecture}[theorem]{Conjecture}

\theoremstyle{definition}
\newtheorem{definition}[theorem]{Definition}

\theoremstyle{remark}
\newtheorem{remark}[theorem]{Remark}

\newcommand{\N}{\mathbb{N}}

\usepackage{amssymb}
\usepackage{caption}
\usepackage{fullpage}
\usepackage{amsmath}
\usepackage{amsfonts}
\usepackage{latexsym}

\title{Stanley Sequences with Odd Character}
\author{Richard A. Moy}
\email{rmoy@willamette.edu}

\begin{document}
\begin{abstract}
Given a set of integers containing no $3$-term arithmetic progressions, one constructs a Stanley sequence by choosing integers greedily without forming such a progression. Independent Stanley sequences are a ``well-structured'' class of Stanley sequences with two main parameters: the character $\lambda(A)$ and the repeat factor $\rho(A)$. Rolnick conjectured that for every $\lambda\in \N_0\backslash\{1,3,5,9,11,15\}$, there exists an independent Stanley sequence $S(A)$ such that $\lambda(A)=\lambda$. This paper demonstrates that $\lambda(A)\not\in \{1,3,5,9,11,15\}$ for any independent Stanley sequence $S(A)$.
\end{abstract}
\maketitle
\vspace{-0.2in}

\section{Introduction}
Let $\N_0$ denote the set of non-negative integers. A subset of $\N_0$ is called $\ell$-free if it contains no $\ell$-term arithmetic progression. We will frequently abbreviate ``arithmetic progression'' by AP. We say a subset, or sequence of elements, of $\N_0$ is free of arithmetic progressions if it is $3$-free. In 1978, Odlyzko and Stanley \cite{OS78} used a greedy algorithm (see Definition \ref{def:greedy}) to produce arithmetic progression free sequences. Their algorithm produced sequences with two distinct growth rates -- those which are highly structured (Type I) and those which are seemingly random (Type II). These classes of Stanley sequences will be more precisely defined in Conjecture \ref{conj:growth}.

\begin{definition}\label{def:greedy}
Given a finite $3$-free set $A=\{a_0,\dots,a_n\}\subset\N_{0}$, the \textit{Stanley sequence generated by $A$} is the infinite sequence $S(A)=\{a_0,a_1,\dots\}$ defined by the following recursion. If $k\ge n$ and $a_0<\dots<a_k$ have been defined, let $a_{k+1}$ be the smallest integer $a>a_k$ such that $\{a_0,\dots, a_k\}\cup\{a\}$ is $3$-free. Though formally one writes $S(\{a_0,\dots,a_n\})$, we will frequently use the notation $S(a_0,\dots,a_n)$ instead.
\end{definition}

\begin{remark}
Without loss of generality, we may assume that every Stanley sequence begins with $0$.
\end{remark}

In Rolnick's investigation of Stanley sequences \cite{R17}, he made the following conjecture about the growth rate of the two types of Stanley sequences.

\begin{conjecture}\label{conj:growth}
Let $S(A)=(a_n)$ be a Stanley sequence. Then, for all $n$ large enough, one of the following two patterns of growth is satisfied:
\begin{itemize}
\item{
Type I: $\alpha\slash 2\le \liminf{a_n\slash n^{\log_2(3)}}\le\limsup{a_n\slash n^{\log_2(3)}}\le \alpha$, or
}
\item{
Type II: $a_n=\Theta(n^2\slash \ln(n))$.
}
\end{itemize}
\end{conjecture}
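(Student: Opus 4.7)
Since the conjecture asserts a sharp dichotomy, the plan is to attach to each Stanley sequence $S(A)$ a structural invariant measuring the extent to which $S(A)$ behaves self-similarly under tripling of scale, and then argue that this invariant is forced into one of two extreme regimes. Concretely, set $B_k = S(A)\cap[0,3^k)$ and test whether, for all sufficiently large $k$, $B_{k+1}$ decomposes as $B_k \cup (c_k + B_k)$ for some shift $c_k$ of order $3^k$. When such a decomposition persists --- precisely the self-similarity enjoyed by the Odlyzko--Stanley sequence $S(0) = \{0,1,3,4,9,10,12,13,\dots\}$ of integers with only $0$s and $1$s in ternary, and more generally by Rolnick's independent Stanley sequences --- I would declare $S(A)$ to be Type~I; otherwise I would declare it Type~II and argue that in eventually every long window $S(A)$ achieves densities close to those of a random greedy $3$-free sequence.

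\textbf{Type~I bounds.} In the self-similar case the growth is essentially forced. Iterating the decomposition gives $|B_k| = 2^k$, and the greedy avoidance property bounds the maximum gap inside each block: the next admissible integer past the top of $B_{k-1}$ cannot lie too far beyond $c_{k-1}$ without forming a $3$-AP against the previous block. Interpolating between scales yields $a_n \asymp n^{\log_2 3}$, with $\alpha$ recoverable as the normalized limit of $c_k/2^{k\log_2 3}$ along the scaling subsequence, and the oscillation between the regime $n = 2^k$ and $n$ slightly less than $2^{k+1}$ accounts for the factor-of-two gap between $\alpha/2$ and $\alpha$. In this regime Rolnick's character $\lambda(A)$ should be finite, tying this case directly to the structural theory the paper develops.

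\textbf{Type~II bounds and the main obstacle.} In the non-self-similar case, I would show that the admissibility of a candidate $a > a_k$ is blocked by the forbidden values $\{2a_k - a_j : j<k\}$ and $\{(a_j+a_i)/2 : i<j \le k\}$, which cover a proportion of $(a_k, 3a_k]$ of order $k/a_k$; heuristically the typical gap is $a_k/k$, and integration gives $a_n = \Theta(n^2/\ln n)$. The genuine difficulty, however, is the dichotomy in the first paragraph: ruling out intermediate growth rates such as $n^{3/2}$ or sequences that oscillate between structured and chaotic blocks. Nothing in the greedy algorithm \emph{a priori} prevents hybrid behaviour, and even for the single sequence $S(0,1)$ the Type~II growth rate remains unproved. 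I therefore expect that any attack on the full conjecture must first establish a hard structural dichotomy via a finite-scale criterion --- well beyond the character-based techniques of this paper --- and only afterwards deploy the separate asymptotic analyses.
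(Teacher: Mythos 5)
The statement you are trying to prove is not a theorem of this paper at all: it is Rolnick's growth-dichotomy conjecture, reproduced here as Conjecture \ref{conj:growth} precisely because it is open, and the paper offers no proof of it (its actual results concern only which characters $\lambda(A)$ are attainable by independent Stanley sequences). So there is no ``paper's proof'' to compare against, and your proposal should be judged as an attempted resolution of an open problem. As such it has a genuine gap, which you yourself identify: the entire content of the conjecture is the dichotomy, i.e.\ that every Stanley sequence eventually falls into the self-similar regime or the ``random-like'' regime with nothing in between, and your finite-scale tripling criterion is only a definition of the two camps, not an argument that excludes hybrid or intermediate behaviour. Declaring a sequence ``Type II'' whenever the decomposition $B_{k+1}=B_k\cup(c_k+B_k)$ fails does not yield $a_n=\Theta(n^2/\ln n)$; the covering estimate you sketch (forbidden values occupying a proportion of order $k/a_k$ of $(a_k,3a_k]$) is the Odlyzko--Stanley heuristic, and it gives at best an upper-bound intuition for typical gaps while saying nothing about the strong correlations the greedy rule creates, which is exactly why no single Stanley sequence has ever been proved to exhibit Type II growth.

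Two smaller corrections. First, your Type I analysis is essentially sound but only conditionally: once one \emph{assumes} the persistent doubling structure, one is in the setting of independent (indeed modular) Stanley sequences, where $|B_k|=2^k$ and $a_n\asymp n^{\log_2 3}$ with the $[\alpha/2,\alpha]$ oscillation are already established in the literature the paper cites; so that half adds nothing beyond the known structured case, and the burden remains entirely on the dichotomy. Second, the example you name is wrong: $S(0,1)$ is the classical Odlyzko--Stanley sequence of ternary $\{0,1\}$-digit numbers and is a prototypical Type I (independent) sequence; the standard candidate for unproved Type II growth, as this paper notes, is $S(0,4)$.
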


Though Type II Stanley sequences are mysterious, a great deal of progress has been made in classifying Type I Stanley sequences \cite{MR16}. In \cite{R17}, Rolnick introduced the concept of the \textit{independent Stanley sequence}. These Stanley sequences follow Type I growth and are defined as follows:
\begin{definition}
A Stanley sequence $S(A)=(a_n)$ is \textit{independent} if there exists constants $\lambda=\lambda(A)$ and $\kappa=\kappa(A)$ such that for all $k\ge \kappa$ and $0\le i<2^k$, we have 
\begin{itemize}
\item{
$a_{2^{k}+i}=a_{2^k}+a_i$
}
\item{
$a_{2^k}=2a_{2^k-1}-\lambda+1.$
}
\end{itemize}
\end{definition}
The constant $\lambda$ is called the \textit{character}, and it is easy to show that $\lambda\ge 0$ for all independent Stanley sequences. If $\kappa$ is taken as small as possible, then $a_{2^\kappa}$ is called the \textit{repeat factor}. Informally, $\kappa$ is the point at which the sequence begins its repetitive behavior. Rolnick and Venkataramana proved that every sufficiently large integer $\rho$ is the repeat factor of some independent Stanley sequence \cite{RV15}.

Rolnick also made a table \cite{R17} of independent Stanley sequences with various characters $\lambda\ge 0$. He found Stanley sequences with every character up to $75$ with the exception of those in the set $\{1,3,5,9,11,15\}$. He proved that, for an independent Stanley sequence $S(A)$, $\lambda(A)\ne 1,3$ \cite[Proposition 2.12]{R17}. In light of his observations, he made the following conjecture:

\begin{conjecture}[Conjecture 2.15, \cite{R17}]
The range of the character function is exactly the set of non-negative integers $\lambda$ that are not in the set $\{1,3,5,9,11,15\}$.
\end{conjecture}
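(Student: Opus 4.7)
The starting point of my proof is the defining relation $a_{2^k} = 2 a_{2^k-1} - \lambda + 1$, which I rewrite as $(\lambda - 1) + a_{2^k} = 2\, a_{2^k-1}$. This says the triple $(\lambda - 1,\ a_{2^k-1},\ a_{2^k})$ forms a $3$-term arithmetic progression. Since $a_{2^k-1}$ and $a_{2^k}$ both lie in the $3$-free set $S(A)$, we must have $\lambda - 1 \notin S(A)$. Because $0 \in S(A)$ by convention, this immediately rules out $\lambda = 1$, recovering Rolnick's result.

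For the remaining values $\lambda \in \{3, 5, 9, 11, 15\}$, my plan is to combine this observation with further $3$-AP obstructions coming from the greedy rule and the doubling identity $A_{k+1} = A_k \cup (a_{2^k} + A_k)$. Greediness forces $a_{2^k} - 1$ to be the apex of a $3$-AP with two elements of $A_k$, giving $a_{2^k} = 2b - a + 1$ and therefore $\lambda = 2(a_{2^k-1} - b) + a$ for some $a < b$ in $A_k$. A parallel analysis of the skipped values $a_{2^k}+1, a_{2^k}+2, \ldots$ (all skipped whenever $a_1 > 1$) produces companion equations such as ``$\lambda - 2 \in A$''. Assembling these constraints, I aim to pin down precisely which small non-negative integers can or must lie in the initial block $A_\kappa$ as a function of $\lambda$.

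A further tool specific to the odd case is that $a_{2^k}$ is always even (because $a_{2^k} = 2 a_{2^k-1} - \lambda + 1$ with $\lambda$ odd), so the doubling preserves residues modulo every power of $2$, and the mod-$2^m$ profile of $S(A)$ is determined by $A_\kappa$. My plan is to classify the admissible residue patterns of $A_\kappa$ modulo $4$ and $8$, and for each $\lambda \in \{3, 5, 9, 11, 15\}$ to rule out every compatible pattern by exhibiting a forbidden $3$-AP built from small elements of $A$ together with $a_{2^k-1}$ and $a_{2^k}$. The main obstacle, I expect, will be $\lambda = 15$: the initial block can accommodate more small elements and the modular constraints are weakest, so the contradiction will likely require iterating the argument across several values of $k$ and combining obstructions from both ends of $A_\kappa$. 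A secondary difficulty is uniformity in the repeat factor $\rho$: every $3$-AP we exhibit must be composed of elements whose presence is forced solely by the independence axioms, not by the particular choice of initial block.
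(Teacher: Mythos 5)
Your proposal cannot prove the statement as written, because the statement is a two-sided claim: the range of the character function equals $\N_0\backslash\{1,3,5,9,11,15\}$. That requires both (i) no independent Stanley sequence has character in $\{1,3,5,9,11,15\}$ and (ii) every other non-negative integer actually occurs as a character. Your proposal addresses only (i). Half (ii) is not something that can be absorbed into the same argument: it is open, and the paper itself proves only (i) (Theorem \ref{thm:main}), noting in its final section that attainment is known just for a positive-density set of even values (Sawhney) and is completely open for odd $\lambda$. So even a fully executed version of your plan would establish at most half of the conjecture.

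Within half (i), the only thing you actually prove is $\lambda\ne 1$: the observation that $(\lambda-1,\,a_{2^k-1},\,a_{2^k})$ is a $3$-term AP, hence $\lambda-1\not\in S(A)$, is correct and is essentially Rolnick's argument. For $\lambda\in\{3,5,9,11,15\}$ you give a plan in the future tense, and the plan is not the route that is known to succeed. The paper does not work directly from the independence recursion; it passes to modular Stanley sequences: every independent sequence is modular modulo $3^\ell\rho$, an odd character forces an even modulus $2N$, tensoring with the modular set $\{0,1\}$ lets one assume $N$ is large, and then $\lambda(A)=2\max(A)-2N+1$ together with the requirement that every non-element of $\{0,\dots,2N-1\}$ be mod-covered (and every non-element above $\omega(A)<\lambda$ be genuinely covered) reduces each character value to a finite case analysis over which of $N+1,\dots,N+6$ and which small integers lie in $A$, with contradictions extracted from the need to cover $2N-1,2N-2,\dots$ and from Lemma \ref{lemma:restriction} identifying $S(A)$ with explicit greedy sequences of known character. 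Your substitute --- classifying residue patterns of the initial block modulo $4$ and $8$ and hunting for forbidden $3$-APs among small elements --- has no mechanism for the step that does the real work there, namely forcing membership or non-membership of elements near $N$ and near $2N$; in particular, the obstruction the paper is specifically designed to handle (initial segments such as $S(0,4)$, which are consistent with all small-element constraints for $\lambda=5$ but are not known to be independent) is exactly the kind of case a purely local analysis of $A_\kappa$ cannot close. As it stands the proposal has a genuine gap both in scope (the attainment half is missing and is not provable by these means) and in execution (no actual argument for $\lambda\in\{3,5,9,11,15\}$).
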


In recent work, Sawhney \cite{S17} has shown that a positive density of even integers appear as characters of independent Stanley sequences. Analyzing the character of an independent Stanley sequence is closely related to another feature of a Stanley sequence which we introduce now.

\begin{definition}
Given a Stanley sequence $S(A)$, we define the omitted set $O(A)$ to be the set of nonnegative integers that are neither in $S(A)$ nor are covered by $S(A)$. For $O(A)\ne \emptyset$, we let $\omega(A)$ denote the largest element of $O(A)$. 
\end{definition}
\begin{remark}
The only Stanley sequence $S(A)$ where $O(A)=\emptyset$ is $S(0)$.
\end{remark}
Using this definition, one can show the following lemma.

\begin{lemma}[Lemma 2.13, \cite{R17}]\label{lemma:omitted}
If $S(A)$ is independent, then $\omega(A)<\lambda(A)$.
\end{lemma}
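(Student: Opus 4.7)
The plan is to argue by contradiction: suppose $m \in O(A)$ satisfies $m \geq \lambda := \lambda(A)$, and construct an explicit covering of $m$ by two elements of $S(A)$ to contradict the omission. The key tool will be a reflection of $m$ around the large sequence element $a_{2^k - 1}$ combined with the greedy property of the sequence at the level-$k$ gap.

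First I would record a structural consequence of the greedy definition. For every $k \geq \kappa := \kappa(A)$, the identity $a_{2^k} = 2 a_{2^k - 1} - \lambda + 1$ forces every integer in the open interval $(a_{2^k - 1}, a_{2^k})$ to be covered by the initial segment $\{a_0, \dots, a_{2^k-1}\}$; otherwise, the greedy algorithm would select such an integer in place of $a_{2^k}$, contradicting the identity.

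Next, choose $k \geq \kappa$ large enough that $a_{2^k - 1} > m$, and consider the reflection $m^* := 2 a_{2^k - 1} - m$. Using the identity above, $m^* = a_{2^k} - (m - \lambda + 1)$, so the hypothesis $m \geq \lambda$ forces $m^* \leq a_{2^k} - 1$, while the bound $m < a_{2^k - 1}$ forces $m^* > a_{2^k - 1}$. Thus $m^*$ sits in the gap and is covered: $m^* = 2 a_l - a_{l'}$ for some $a_{l'} < a_l$ in $S(A) \cap [0, a_{2^k - 1}]$. Rearranging yields $m = 2(a_{2^k - 1} - a_l) + a_{l'}$, so $m$ is the last term of the 3-term arithmetic progression $\bigl(a_{l'},\ a_{l'} + (a_{2^k - 1} - a_l),\ m\bigr)$.

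The contradiction closes once the middle term $\mu := a_{l'} + (a_{2^k - 1} - a_l)$ is shown to belong to $S(A)$, for then $m = 2\mu - a_{l'}$ exhibits a covering of $m$ by $a_{l'}, \mu \in S(A)$. This final step is the main obstacle I anticipate. I would attack it by exploiting the binary-additive structure of independent Stanley sequences: iterating the recursion $a_{2^p + i} = a_{2^p} + a_i$ for $p \geq \kappa$ yields both the decomposition $a_n = \sum_{i \in \mathrm{bits}(n)} a_{2^i}$ and the symmetry identity $a_{2^k - 1} - a_l = a_{2^k - 1 - l}$, from which $\mu = a_{l'} + a_{2^k - 1 - l}$. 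Membership $\mu \in S(A)$ then reduces to arranging that the binary supports of $l'$ and $2^k - 1 - l$ be disjoint, which one hopes to arrange by choosing the cover $(l', l)$ of $m^*$ judiciously among possibly many representations. Should no direct choice succeed, I would pair the argument with a strong induction on $m$: any offending $\mu \notin S(A)$ is either covered (allowing substitution via a new AP and a second reflection) or itself in $O(A)$, in which case the inductive hypothesis forces $\mu < \lambda$, contradicting the largeness of $a_{2^k - 1} - a_l$ for $k \geq \kappa$ large.
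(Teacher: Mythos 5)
Your setup (the reflection $m^{*}=2a_{2^{k}-1}-m$, the fact that every integer strictly between $a_{2^{k}-1}$ and $a_{2^{k}}$ must be covered, and the resulting identity $m=2\mu-a_{l'}$ with $\mu=a_{l'}+(a_{2^{k}-1}-a_{l})$) is correct, but the proof genuinely breaks at the step you yourself flag: showing $\mu\in S(A)$. The tool you propose for it, the ``symmetry identity'' $a_{2^{k}-1}-a_{l}=a_{2^{k}-1-l}$, is false in general. Iterating $a_{2^{p}+i}=a_{2^{p}}+a_{i}$ only reduces that identity to the corresponding statement for the initial block $a_{0},\dots,a_{2^{\kappa}-1}$, on which independence imposes no structure, and the block need not be symmetric. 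A counterexample appears in this very paper: in Lemma \ref{lemma:char15h} (Case II.2), $S(0,3,4,9,12,13,16)$ is an independent Stanley sequence with character $24$; its terms begin $0,3,4,9,12,13,16,25,27,30,31,\dots$, and $a_{7}-a_{1}=25-3=22$ is not a term at all (nor is $a_{7}-a_{2}=21$), so differences of block elements from the top of the block need not be terms. Your fallback strategies do not close the gap either: if $\mu$ is merely covered, say $\mu=2a_{s}-a_{t}$, then $m=4a_{s}-2a_{t}-a_{l'}$ has no $3$-AP structure, and if $\mu\in O(A)$, the inductive bound $\mu<\lambda$ only says $a_{2^{k}-1}-a_{l}<\lambda$, which is no contradiction because the covering pair of $m^{*}$ may perfectly well use an $a_{l}$ within $\lambda$ of $a_{2^{k}-1}$.

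For comparison, note the paper does not prove this lemma; it quotes it from Rolnick \cite{R17}, and the cited argument uses a translation rather than a reflection, precisely so that no membership claim is needed. Take $m\in O(A)$ and $k\ge\kappa$ with $a_{2^{k}-1}\ge m+\lambda$, and set $z=a_{2^{k}}+m$. Then $a_{2^{k}}\le z<a_{2^{k}}+a_{2^{k}-1}=a_{2^{k+1}-1}$, and the terms of $S(A)$ in this range are exactly $a_{2^{k}}+a_{i}$ with $i<2^{k}$; since $m\notin S(A)$, we get $z\notin S(A)$, so by the greedy rule $z=2a_{j}-a_{i}$ for some terms $a_{i}<a_{j}$. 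If $a_{j}\le a_{2^{k}-1}$, then $z\le 2a_{2^{k}-1}-a_{i}=a_{2^{k}}+\lambda-1-a_{i}$, hence $m\le\lambda-1$. If $a_{j}\ge a_{2^{k}}$, write $a_{j}=a_{2^{k}}+a_{j'}$, so $m=a_{2^{k}}+2a_{j'}-a_{i}$; then either $a_{i}\ge a_{2^{k}}$, whence $a_{i}=a_{2^{k}}+a_{i'}$ and $m=2a_{j'}-a_{i'}$ is covered (contradicting $m\in O(A)$), or $a_{i}\le a_{2^{k}-1}$, whence $m\ge a_{2^{k}}-a_{2^{k}-1}=a_{2^{k}-1}-\lambda+1>m$, a contradiction. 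Thus every omitted element is less than $\lambda$, i.e.\ $\omega(A)<\lambda(A)$. If you want to salvage your reflection argument, you would need to replace the membership claim for $\mu$ by something of this translation type; as written, the proposal is incomplete.
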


Since $\max(A)>\omega(A)$, the following corollary easily follows.

\begin{corollary}[Corollary 2.14, \cite{R17}]\label{conj:char}
At most finitely many independent Stanley sequences exist with a given character $\lambda$.
\end{corollary}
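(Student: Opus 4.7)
My plan is to show that every independent Stanley sequence of character $\lambda$ is completely determined by its intersection with $\{0,1,\dots,\lambda-1\}$; since there are only $2^\lambda$ subsets of this finite set, this forces at most finitely many such sequences.

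The key step is a three-way partition of $\N_0$ relative to $S(A)$: every non-negative integer either lies in $S(A)$, is covered by two earlier elements of $S(A)$, or belongs to $O(A)$. Mutual exclusivity of ``in $S(A)$'' and ``covered'' follows from $S(A)$ being $3$-free, and the three classes are jointly exhaustive directly from the definition of $O(A)$. The quoted fact $\max(A)>\omega(A)$ is a special case of this partition above $\max(A)$: the greedy algorithm either adds or refuses each candidate based on coverage alone, never leaving it omitted, so $\omega(A)<\max(A)$. More crucially, Lemma~\ref{lemma:omitted} gives $\omega(A)<\lambda$, so every $n\ge\lambda$ lies above $\omega(A)$ and therefore cannot be omitted; consequently
\[
n\in S(A)\iff n\text{ is not covered by }S(A)\cap[0,n-1].
\]

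Setting $B:=S(A)\cap[0,\lambda-1]$, I would finish by strong induction on $n\ge\lambda$. The displayed equivalence shows that once $S(A)\cap[0,n-1]$ has been determined from $B$, so too is $S(A)\cap[0,n]$; iterating, $S(A)$ is completely recovered from the subset $B\subseteq\{0,\dots,\lambda-1\}$. This yields an injection from the collection of independent Stanley sequences of character $\lambda$ into the $2^\lambda$ subsets of $\{0,\dots,\lambda-1\}$, proving the corollary. The only genuine input is Lemma~\ref{lemma:omitted}, which is precisely what secures the base case $n=\lambda$ of the induction; the remainder is unwinding definitions, so I anticipate no real obstacle.
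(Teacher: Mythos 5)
Your argument is correct and is essentially the paper's (i.e., Rolnick's) route: the only real input is Lemma~\ref{lemma:omitted}, after which every integer above $\omega(A)<\lambda$ is in the sequence exactly when it is not covered by earlier terms, so the sequence is determined by its intersection with $\{0,\dots,\lambda-1\}$, giving at most $2^{\lambda}$ sequences. The paper states this only as a one-line consequence; your write-up just makes the reconstruction-by-coverage induction explicit, and it is sound.
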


Using this corollary, one can show that there are no independent Stanley sequences of a given character $\lambda$ by classifying every Stanley sequence with $\omega<\lambda$. One can utilize this technique to prove that $\lambda\ne 1,3$ because every Stanley sequence with $\omega(A)<3$ is independent with $\lambda(A)\ne 1,3$. Unfortunately, this argument does not work for $\lambda=5$ because the Stanley sequence $S(0,4)$ does not \emph{appear} to be independent and experimentally exhibits Type II growth. Though no Stanley sequence, including $S(0,4)$, has been proven to follow Type II growth, we will prove that no independent Stanley sequence has character $\lambda=1,3,5,9,11,15$ by showing sequences such as $S(0,4)$ cannot be independent \emph{and} have certain characters.

\begin{theorem}\label{thm:main}
Let $S(A)$ be an independent Stanley sequence where $A$ is a finite $3$-free subset of $\N_0$. Then $\lambda(A)\not\in \{1,3,5,9,11,15 \}$.
\end{theorem}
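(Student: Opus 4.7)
The plan is to extend Rolnick's argument that $\lambda(A) \ne 1,3$, which proceeds by enumerating all Stanley sequences with $\omega(A) < 3$ and checking that each is independent with a character outside the forbidden set. By Lemma~\ref{lemma:omitted}, any independent $S(A)$ with character in $\{5,9,11,15\}$ has $\omega(A) \le 14$, and Corollary~\ref{conj:char} then guarantees that only finitely many candidates need be examined. I would first enumerate, for each $\lambda \in \{5,9,11,15\}$ separately, all $3$-free sets $A$ (containing $0$) whose greedy continuation $S(A)$ has $\omega(A) < \lambda$.

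For each candidate, I would compute the initial terms of $S(A)$ directly from the greedy rule. The majority are recognizable Type~I sequences whose doubling pattern surfaces quickly, so $\lambda(A)$ can be read off and verified to lie outside $\{1,3,5,9,11,15\}$. The essential cases are candidates whose greedy dynamics do not visibly fit the doubling pattern --- the prototype for $\lambda = 5$ being $S(0,4)$, which empirically looks Type~II but has not been proven non-independent. For these I would argue by contradiction: assume $S(A)$ is independent with character $\lambda$ and minimal parameter $\kappa$, so that
\[
a_{2^\kappa} = 2 a_{2^\kappa - 1} - \lambda + 1, \qquad a_{2^\kappa + i} = a_{2^\kappa} + a_i \quad (0 \le i < 2^\kappa)
\]
both hold inside the greedy sequence. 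The first relation pins down $\lambda$ from $a_{2^\kappa - 1}$ and $a_{2^\kappa}$, so requiring $\lambda \in \{5,9,11,15\}$ becomes a tight numerical condition on a short prefix of the greedy sequence; the second relation then forces the next $2^\kappa$ terms, which must agree with the greedy continuation. The goal is to exhibit, for each viable $\kappa$, a concrete disagreement within a finite window.

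The main obstacle is bounding $\kappa$ a priori so that the verification is genuinely finite. The leverage comes from the rigidity of the independence recursion: chaining the two relations yields $a_{2^{k+1}} = 3 a_{2^k}$ for every $k \ge \kappa$, so once the initial block and $\lambda$ are fixed the entire tail is determined and grows like $3^k$. For sequences such as $S(0,4)$ the greedy growth already outpaces this rate at modest indices, so sufficiently large $\kappa$ would force $a_{2^\kappa}$ below the greedy value and is automatically excluded; this leaves only a short list of small $\kappa$ to check by hand, where the two independence relations fail directly. The remaining forbidden characters $9,11,15$ are handled by analogous case analyses with more candidate sets $A$ and longer verification windows, but the template is the same: the combination of the Lemma~\ref{lemma:omitted} bound, the growth comparison, and the rigidity of the doubling recursion collapses the problem to a finite computation that rules out every configuration yielding $\lambda \in \{5,9,11,15\}$.
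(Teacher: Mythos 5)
Your reduction collapses at exactly the point the paper identifies as the real difficulty. The enumeration of candidates with $\omega(A)<\lambda$ (via Lemma~\ref{lemma:omitted} and Corollary~\ref{conj:char}) is fine, but for $\lambda=5$ it necessarily includes $S(0,4)$, and your method for disposing of such candidates is not sound. To rule out independence with character $5$ you must exclude \emph{every} value of $\kappa$, and your proposed a priori bound on $\kappa$ rests on the claim that the greedy growth of $S(0,4)$ ``outpaces'' the rate $a_{2^{k+1}}=3a_{2^k}$ at dyadic indices. That is precisely the experimentally observed but unproven Type~II behavior (Conjecture~\ref{conj:growth}); the paper states explicitly that no Stanley sequence, including $S(0,4)$, has been proven to follow Type~II growth. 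Moreover, the comparison itself is not well posed: if independence is only assumed from $\kappa$ onward, the prefix $a_0,\dots,a_{2^\kappa-1}$ consists of genuine greedy values, the recursion's prediction for $a_{2^\kappa}$ is $2a_{2^\kappa-1}-\lambda+1$ in terms of those same greedy values, and there is no fixed geometric tail to compare against; for each $\kappa$ one gets a fresh condition involving ever larger greedy terms, so without a theorem about the long-range behavior of $S(0,4)$ the verification is not finite. The same obstruction recurs for $\lambda=9,11,15$.

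The paper circumvents this by never analyzing the greedy dynamics of sequences like $S(0,4)$ at all. It passes to modular Stanley sequences: every independent Stanley sequence is modular modulo $3^\ell\rho$, and an odd character forces an even modulus $2N$ with $\max(A)=N+(\lambda-1)/2$. Theorem~\ref{thm:modular} is then proved by purely finite combinatorics on the modular set $A\subset\{0,\dots,2N-1\}$: the constraint that $x\in A$ forbids $x+N\pmod{2N}\in A$, the requirement that every non-element above $\omega(A)$ be covered (in particular the elements $2N-1,2N-2,\dots$ and the small elements near $0$), the restriction Lemma~\ref{lemma:restriction} pinning $S(A)$ to explicit sequences, and the tensoring trick of Remark~3.9 allowing $N$ to be taken large. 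If you want to salvage your approach you would need some substitute for the modular framework that makes the $\kappa$-verification finite without assuming anything about Type~II growth; as written, that step is a genuine gap, not a detail.
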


\section{Modular sequences}
In order to prove our main result, we will use the theory of modular sequences developed in \cite{MR16} and more recently studied in \cite{ST17}. Modular sequences are a class of Stanley sequences of Type I which contains all independent Stanley sequences as a strictly smaller subset. 
\begin{definition}
Let $A$ be a set of integers and $z$ be an integer. We say that $z$ is \emph{covered} by $A$ if there exist $x,y\in A$ such that $x<y$ and $2y-x=z$. We frequently say that $z$ is covered by $x$ and $y$.

Suppose that $N$ is a positive integer. If $x,y,z\in\{0,\dots,N-1\}$ and $x\ne y$, we say they form an arithmetic progression modulo $N$, or a \emph{mod-AP} if $2y-x\equiv z\pmod N$. 

Suppose again that $N$ is a positive integer and $A\subseteq \{0,\ldots,N-1\}$. Then, we say that $z$ is \emph{covered by $A$ modulo $N$}, or \emph{mod-covered}, if there exist $x,y\in A$ with $x<y$ such that $x,y,z$ form an arithmetic progression modulo $N$.
\end{definition}

\begin{definition}
Fix a positive integer $N\ge 1$. Suppose the set $A\subset\{0,\ldots,N-1\}$ containing $0$ is 3-free modulo $N$, and all $x\in\{0,\ldots,N-1\}\backslash A$ are covered by $A$ modulo $N$. Then $A$ is said to be a \emph{modular set modulo $N$} and $S(A)$ is said to be a \emph{modular Stanley sequence modulo $N$}.
\end{definition}

Observe that the modulus $N$ of a modular Stanley sequence is analagous to the repeat factor $\rho$ of an independent Stanley sequence. One can make this statement more precise in the following proposition:

\begin{proposition}[Proposition 2.3, \cite{MR16}]
Suppose $A$ is a finite subset of $\N_0$ and suppose $S(A)$ is an independent Stanley sequence with repeat factor $\rho$. Then $S(A)$ is a modular Stanley sequence modulo $3^\ell\cdot \rho$ for some integer $\ell\ge 0$.
\end{proposition}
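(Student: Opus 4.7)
The plan is to establish the proposition by exhibiting an explicit modular generating set. Fix any $\ell \ge 0$, let $N^{\ast} := 2^{\kappa+\ell}$ and $N := 3^\ell \rho$, and set $A^{\ast} := \{a_0, a_1, \dots, a_{N^{\ast}-1}\}$, the initial segment of $S(A)$ of length $N^{\ast}$. Since $A^{\ast} \subseteq S(A)$ generates $S(A)$ greedily, one has $S(A^{\ast}) = S(A)$, so it suffices to prove that $A^{\ast}$ is a modular set modulo $N$.

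The preliminary computation is that the two defining identities of an independent Stanley sequence combine to give $a_{2^{k+1}} = 3 a_{2^k}$ for every $k \ge \kappa$. Indeed, $a_{2^{k+1}-1} = a_{2^k} + a_{2^k-1}$ together with $a_{2^k} = 2 a_{2^k-1} - \lambda + 1$ yields $a_{2^{k+1}} = 2 a_{2^{k+1}-1} - \lambda + 1 = 3 a_{2^k}$. Iterating gives $a_{N^{\ast}} = 3^\ell \rho = N$, so $A^{\ast} \subset \{0, \dots, N - 1\}$ and, because $S(A)$ is strictly increasing, $S(A) \cap [0, N) = A^{\ast}$.

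Next I would verify the two defining conditions of a modular set. For 3-freeness modulo $N$: given distinct $x, y, z \in A^{\ast}$ with $2y - x \equiv z \pmod{N}$, the constraint $x, y, z \in [0, N)$ forces $2y - x = z + kN$ for some $k \in \{-1, 0, 1\}$. Case $k = 0$ gives an honest 3-AP in $S(A)$, a contradiction. Case $k = 1$: a short estimate ($x \ge y$ would force $y \ge N$) yields $x < y$, and the doubling identity at level $\kappa+\ell$ puts $z + N = a_{N^{\ast}+i} \in S(A)$; then $(x, y, z+N)$ is a 3-AP in $S(A)$, a contradiction. Case $k = -1$: analogously, $z - y = y + N - x > 0$ gives $y < z$, and $(x, y+N, z+N)$ is a 3-AP in $S(A)$ by the same doubling. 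For mod-coverage: let $w \in \{0, \dots, N-1\} \setminus A^{\ast}$. Since $S(A) \cap [0, N) = A^{\ast}$, $w \notin S(A)$; by the greedy rule, $w$ was blocked at the moment it was first considered, meaning $w = 2b - a$ for some $a < b < w$ in $S(A)$. Both $a$ and $b$ lie in $[0, N)$ and hence in $A^{\ast}$, so they witness the mod-coverage of $w$.

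The main obstacle is the wrap-around analysis $k = \pm 1$ in the 3-freeness step: a putative mod-AP inside $A^{\ast}$ must be converted into an actual 3-AP of $S(A)$ by shifting one or two of its entries by $N$. The doubling identity $a_{N^{\ast}+i} = N + a_i$ makes these shifts legal, but one still has to verify the inequalities ensuring that the shifted triple is arranged in proper AP order---this is the purpose of the sign analyses on $2y - x$. Once this bookkeeping is settled, the rest of the argument is routine.
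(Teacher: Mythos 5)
This proposition is quoted from \cite{MR16}; the paper at hand gives no proof of it, so your argument has to stand on its own. Its first half essentially does: the identity $a_{2^{k+1}}=3a_{2^k}$, the choice $A^{\ast}=S(A)\cap[0,N)$ with $N=a_{2^{\kappa+\ell}}=3^{\ell}\rho$, and the conversion of a wrap-around mod-AP into an honest $3$-AP of $S(A)$ via $a_{N^{\ast}+i}=N+a_i$ are all sound, and your sign analysis in the cases $k=\pm1$ is correct. (Two small repairs there: the notion of $3$-freeness modulo $N$ used in this paper also forbids degenerate mod-APs with $z=x$, i.e.\ two elements of $A^{\ast}$ differing by $N/2$ --- this is exactly what Remark \ref{remark:mod2N} and the progression $0,N,0$ in Lemma \ref{lemma:char1} rely on --- so you must also rule those out, which the same shift trick does; and for small $\ell$ the set $A^{\ast}$ need not contain the generating set $A$, so $S(A^{\ast})=S(A)$ is not automatic. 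Since the statement only asks for \emph{some} $\ell$, you should simply take $\ell$ large rather than ``any $\ell\ge0$.'')

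The genuine gap is in the coverage step. You assert that every $w\in\{0,\dots,N-1\}\setminus A^{\ast}$ ``was blocked at the moment it was first considered'' and hence satisfies $w=2b-a$ with $a<b$ in $S(A)$. But the greedy algorithm only ever examines candidates larger than the current maximum; integers below $\max(A)$ that are not in $A$ are never considered, and some of them are not covered by $S(A)$ at all --- these are precisely the elements of the omitted set $O(A)$, which is nonempty for every Stanley sequence other than $S(0)$, and all of whose elements lie below $N$. Concretely, $S(0,2)=\{0,2,3,5,9,11,12,14,27,\dots\}$ is independent with $\rho=3$, and $w=1$ is covered by no pair $a<b$ of $S(0,2)$ whatsoever; it is only \emph{mod}-covered, via $2\cdot2-0\equiv1\pmod 3$. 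So your argument proves nothing for such $w$, and what it claims (genuine coverage of all non-elements below $N$) is false; indeed the paper's own framework ($\omega(A)$, Lemma \ref{lemma:omitted}, and the remark that only elements above $\omega(A)$ are covered rather than merely mod-covered) exists exactly because of these elements. The substantive content of the proposition is to show that every omitted $w<N$ is mod-covered with the correct orientation, i.e.\ that there exist $x<y$ in $A^{\ast}$ with $2y-x=w+N$; your greedy argument applied to $w+N$ does not deliver this, since the pair covering $w+N$ may consist of one element below $N$ and one above, which after reduction gives $2y-x\equiv w$ with $y<x$, not a mod-cover in the required sense. Closing this case is where the real work (and the need for a sufficiently large modulus, hence the factor $3^{\ell}$) lies, and the proposal does not address it.
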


\begin{remark}
One can show that the modulus of a modular Stanley sequence is well-defined up to a power of $3$.
\end{remark}

Definitions made about independent Stanley sequences generalize nicely to modular Stanley sequences.

\begin{definition}\label{def:char}
Suppose that $A$ is a modular set modulo $N$. Define $\lambda(A)=2\cdot \max(A)-N+1$ and define $\omega(A)$ to be the largest element $x\in\{0,1,\dots,N-1\}\backslash A$ such that $x$ is covered by $A$ modulo $N$ but $x$ is \emph{not} covered by $A$.
\end{definition}

The definitions of $\lambda$ and $\omega$ coincide with the corresponding definitions for an independent Stanley sequence when $S(A)$ is an independent Stanley sequence.

\begin{remark}
Throughout this paper, we will repeatedly use the fact that, for a modular set $A$ modulo $N$, every element $x\in\{0,1,\dots,N-1\}\backslash A$, such that $x>\omega(A)$, is \emph{covered} by $A$ (and not merely mod-covered by $A$).
\end{remark}

\section{Proof of Main Result}
\begin{theorem}\label{thm:modular}
If $A$ is a modular set modulo $N\in\N$, then $\lambda(A)\not\in \{1,3,5,9,11,15 \}$.
\end{theorem}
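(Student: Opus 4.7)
The plan is to work case-by-case over $\lambda \in \{1, 3, 5, 9, 11, 15\}$ and derive, in each case, a contradiction from the assumption that $A$ is a modular set modulo some $N$ with $\lambda(A) = \lambda$. Setting $m = \max(A)$, we have $N = 2m - \lambda + 1$, which is even since $\lambda$ is odd, and $2m \equiv \lambda - 1 \pmod{N}$. Throughout, two principles will drive the argument. First, \emph{exclusion}: for any $a, b \in A$ with $a \neq b$, neither $(2b - a) \bmod N$ nor $(2a - b) \bmod N$ may lie in $A$. Second, \emph{covering}: each $x \in \{0, \ldots, N - 1\} \setminus A$ must admit a representation $x \equiv 2b - a \pmod{N}$ with $a < b$ in $A$. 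Since small residues admit only a handful of candidate covering pairs, these produce strong forcing conditions on the small elements of $A$. As a useful bookkeeping observation, every $x \in \{\lambda, \ldots, N - 1\} \setminus A$ is actually covered (not merely mod-covered): otherwise $x + kN = 2b - a \le 2m$ for some $k \ge 1$, which forces $x \le \lambda - 1$.

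The two smallest cases are immediate. For $\lambda = 1$, exclusion applied to the pair $(0, m)$ gives the mod-AP $(0, m, 0)$, all of whose coordinates lie in $A$, violating $3$-freeness modulo $N$. For $\lambda = 3$, exclusion forces $2 \notin A$ (via $(0, m)$) and $1 \notin A$ (via $(1, m)$, which would yield the mod-AP $(1, m, 1)$), but a short enumeration of the possible solutions of $2b - a \equiv 1 \pmod{N}$ shows that $1$ admits no valid covering pair, contradicting the modular-set property.

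For $\lambda \in \{5, 9, 11, 15\}$, I would branch the argument on the small-scale structure $A \cap [0, \lambda - 1]$. Each candidate for this intersection must be $3$-free modulo $N$, must be compatible with the exclusion constraints generated by pairs $(a, m)$ as $a$ ranges over the candidate, and (together with $m$) must suffice to mod-cover every element of $[0, \lambda - 1] \setminus A$. Most candidates are eliminated immediately: either an exclusion rules out a required element, or the covering requirement for some specific small residue has no valid solution among pairs in the candidate. The surviving candidates propagate further constraints on $A$ --- forcing new inclusions to cover larger omitted residues and generating new exclusions from each new pair --- and each such chain eventually collapses to a contradiction.

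The main obstacle will be controlling the case analysis for $\lambda = 15$, where the case tree branches widest. To keep this tractable I would prove, as a preliminary structural lemma, a uniform constraint narrowing $A \cap [0, \lambda - 1]$ to a short explicit list of possibilities for each odd $\lambda \le 15$. Formulating this lemma crisply --- roughly, that $A \cap [0, \lambda - 1]$ is controlled by only a bounded number of parameters independent of $m$ --- is the technical heart of the argument; once it is in place, the remaining verification reduces to a finite, if somewhat laborious, enumeration.
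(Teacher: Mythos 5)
There is a genuine gap, and it sits exactly where you place your hopes: in the claim that branching on $A\cap[0,\lambda-1]$, filtered by exclusions and by coverings using pairs $(a,m)$, narrows things down, after which ``each such chain eventually collapses to a contradiction.'' First, the filter itself is wrong as stated. Write the (even) modulus as $N$ and $m=\max(A)=(N+\lambda-1)/2$. A small residue $s\in[0,\lambda-1]$ need not be mod-covered by a pair involving $m$: any element $y=N/2+j\in A$ with $1\le j\le(\lambda-1)/2$ gives $2y-a\equiv 2j-a\pmod N$, so elements strictly between $N/2$ and $m$ both mod-cover small residues and generate exclusions among them. (Concretely, for $\lambda=5$ the residue $2$ can be mod-covered by $0$ and $N/2+1$, which your test ``together with $m$'' would never see.) So the membership pattern of $A$ in the band $(N/2,m)$ is an unavoidable branching variable, and your proposed structural lemma about $A\cap[0,\lambda-1]$ cannot be proved by the local check you describe; the paper's case analysis is organized precisely around which of these band elements lie in $A$ (e.g.\ its $\lambda=15$ argument runs through all of $N/2+1,\dots,N/2+6$ in its notation).

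Second, even once the small elements are pinned down, many configurations survive all local constraints because they extend to genuine $3$-free greedy sequences (such as $S(0,1,5,6,13)$, $S(0,2,5,7,11)$, $S(0,1,3,4,11)$); nothing in your outline eliminates them. The paper's mechanism is a restriction lemma identifying $S(A)$ with an explicit Stanley sequence, followed by an analysis of how the residues just below the modulus ($N-1,N-2,\dots$) must be \emph{covered} (not merely mod-covered, since they exceed $\omega(A)$); this forces elements just below $N/2$ into $A$ and ends either in a forbidden mod-AP of the shape $(N/2-i,\,0,\,N/2+i)$ or in the identification of $S(A)$ with a modular sequence of a different character. That top-end analysis in turn requires the modulus to be large compared with $\lambda$, which the paper secures by tensoring with the modular set $\{0,1\}$ modulo $3$ (same character, modulus tripled); without such a reduction your ``finite enumeration'' is not finite, since a given character is compatible with infinitely many moduli. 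Since you explicitly defer the key structural lemma and omit both the band-element branching and the large-modulus/top-end argument, the proposal as it stands does not yet constitute a proof.
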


Observe that this result implies Theorem \ref{thm:main} since every independent Stanley sequence is a modular Stanley sequence.

The proof of Theorem \ref{thm:modular} has been broken up into several more manageable results including Lemma \ref{lemma:char1}, Lemma \ref{lemma:char3}, Proposition \ref{prop:char5}, Proposition \ref{prop:char9}, Proposition \ref{prop:char11}, and Proposition \ref{prop:char15}. The proofs of Lemmas \ref{lemma:char1} and \ref{lemma:char3} and Proposition \ref{prop:char5} are more detailed in order to give the reader better guidance in understanding the various proof techniques. The later lemmas and propositions omit some details for brevity.

Lemma \ref{lemma:restriction}, though simple, will prove invaluable.

\begin{lemma}\label{lemma:restriction}
Suppose that $A=\{a_0,\dots,a_n\}$ with $0=a_0<\dots<a_n$ is a modular set modulo $N$ for some $N\in\N$. If $a_k>\omega(A)$, then $A=S(a_0,\dots,a_k)\cap\{0,1,\dots,N-1\}$ and $S(A)=S(a_0,\dots,a_k)$.
\end{lemma}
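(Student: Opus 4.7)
The plan is to show, by induction, that the greedy Stanley sequence $S(a_0,\dots,a_k)$ reproduces all of $A$ as its first $n+1$ terms and that every subsequent term is at least $N$; the two claimed equalities follow at once from this.

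First I would note that $A$ is $3$-free as a subset of $\Z$, since it is $3$-free modulo $N$. The core step is to prove by induction on $m$, for $k\le m\le n$, that the length-$(m+1)$ initial segment of $S(a_0,\dots,a_k)$ is exactly $a_0,\dots,a_m$. The base case $m=k$ is immediate. For the inductive step, assume the sequence begins $a_0,\dots,a_{m-1}$. Since $A$ is $3$-free, $a_m$ is itself a legal greedy continuation, so one only needs to rule out some smaller legal value $x$ with $a_{m-1}<x<a_m$. Because $a_m\le a_n<N$, any such $x$ lies in $\{0,\dots,N-1\}$. The ordering $a_{m-1}<x<a_m$ forces $x\notin A$, and from $x>a_{m-1}\ge a_k>\omega(A)$ together with the remark after Definition \ref{def:char}, $x$ is actually covered by $A$ in the integers: $x=2a_j-a_i$ with $a_i<a_j$ in $A$. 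Because $a_i<a_j$ forces $a_j<x<a_m$, both $a_i$ and $a_j$ are already among $a_0,\dots,a_{m-1}$, contradicting the $3$-freeness of $\{a_0,\dots,a_{m-1},x\}$.

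A nearly identical argument handles the step past $a_n$: any candidate $x$ with $a_n<x<N$ for the $(n+2)$-th term of $S(a_0,\dots,a_k)$ would satisfy $x\in\{0,\dots,N-1\}\setminus A$ and $x>\omega(A)$, hence would be blocked by integer coverage from $A$. So the first greedy choice after $a_n$ is $\ge N$, and the monotonicity of Stanley sequences forces every later term to exceed $N$ as well.

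Combining these observations gives $S(a_0,\dots,a_k)\cap\{0,\dots,N-1\}=\{a_0,\dots,a_n\}=A$, and $S(A)=S(a_0,\dots,a_n)$ and $S(a_0,\dots,a_k)$ are both determined by greedy extension of the common initial segment $a_0,\dots,a_n$, so they coincide. The only subtle point is to invoke \emph{integer} coverage rather than mere mod-coverage for elements above $\omega(A)$; this is exactly what the remark after Definition \ref{def:char} supplies, and the argument would collapse without it.
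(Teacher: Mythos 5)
Your proof is correct and follows essentially the same route as the paper: induct along the elements of $A$, and use the remark after Definition \ref{def:char} that every non-element above $\omega(A)$ is covered by $A$ in the integers (with both covering elements necessarily smaller, hence already in the greedy sequence) to block all intermediate candidates. If anything, you are slightly more explicit than the paper in checking that each $a_m$ is itself a legal greedy choice and in handling the final stretch between $a_n$ and $N$, but the underlying argument is the same.
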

\begin{proof}
If $x\in\N$ with $x\le a_k$ then $x\in A$ if and only if $x\in\{a_0,\dots,a_k\}$. Therefore $S(a_0,\dots,a_k)\cap\{0,1,\dots,a_k\}=A\cap\{ a_0,\dots,a_k\}$. Now we proceed by induction. Suppose that $S(a_0,\dots,a_k)\cap\{0,1,\dots,a_m\}=A\cap\{0,1,\dots,a_m\}$ for some $k\le m<n$. If $z\in\N$ and $a_m<z<a_{m+1}$ then $z\not\in A$ and $z>\omega(A)$. Therefore, there exist $a_i,a_j\in A$ with $a_i<a_j$ such that $a_i,a_j,z$ form an AP. Since $i,j\le m$ we see that $a_i,a_j\in S(a_0,\dots,a_k)$ and therefore $z\not\in S(a_0,\dots,a_k)$. The greedy algorithm then dictates that $a_{m+1}\in S(a_0,\dots,a_k)$ and $S(a_0,\dots,a_k)\cap\{0,1,\dots,a_{m+1}\}=A\cap\{0,1,\dots,a_{m+1}\}$. By induction we have shown that $ S(a_0,\dots,a_k)\cap\{0,1,\dots,N-1 \}=A$ and $S(a_0,\dots,a_k)=S(A)$.
\end{proof}

We begin by proving a few simple lemmas. In all of these lemma, the character being investigated is odd, thus the modulus is required to be even (see Definition \ref{def:char}). Therefore, we will only consider modular sets with modulus $2N$ for some $N\in\N$.

\subsection{Characters $\lambda=1,3$}

\begin{lemma}\label{lemma:char1}
There does not exist a modular set $A$ modulo $2N$ with $\lambda(A)=1$.
\end{lemma}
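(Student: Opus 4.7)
The plan is to derive a contradiction from $\lambda(A)=1$ by exhibiting a degenerate three-term progression inside $A$. First, I would unpack Definition \ref{def:char}: with modulus $2N$, the equation $\lambda(A) = 2\max(A) - 2N + 1 = 1$ forces $\max(A) = N$, so in particular $N\in A$. Since every modular set contains $0$ by hypothesis, this gives $\{0,N\}\subseteq A$.

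Next, I would observe that $N$ sits at exactly half the modulus, so $2N\equiv 0\pmod{2N}$. Setting $x=0$, $y=N$, $z=0$, we have $x\ne y$ (as $N\ge 1$) and $2y-x = 2N \equiv z \pmod{2N}$, so by the stated definition the triple $(x,y,z)$ is a mod-AP. All three entries lie in $A$, contradicting the fact that $A$ is 3-free modulo $2N$. This finishes the argument.

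The main obstacle is conceptual rather than technical: the crux is noticing that the paper's mod-AP definition requires only $x\ne y$, not that $x,y,z$ be pairwise distinct, so it admits the degenerate case $z=x$. This is precisely what prevents the midpoint residue $N$ from coexisting with $0$ in a 3-free set modulo $2N$. Once one notices this, no case analysis on additional elements of $A$ and no appeal to deeper structural tools such as Lemma \ref{lemma:restriction} are required; the lemma falls out directly from the two definitions.
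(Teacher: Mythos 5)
Your proof is correct and is essentially identical to the paper's: both derive $\max(A)=N$ from $\lambda(A)=2\max(A)-2N+1=1$ and then observe that $0,N,0$ forms a mod-AP modulo $2N$ (permitted because the definition requires only $x\ne y$), contradicting the $3$-freeness of $A$ modulo $2N$. Your write-up simply makes the degenerate-progression point explicit where the paper leaves it implicit.
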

\begin{proof}
Let $A$ be a modular set with modulus $2N$ where $N\in\N$ and $\lambda(A)=1$. Using the definition of $\lambda$, one finds that $\max(A)=N$. Contradiction. Every modular set contains $0$; therefore, $A$ contains the mod $2N$ arithmetic progression $0,N,0$.
\end{proof}

\begin{remark}\label{remark:mod2N}
The proof of Lemma \ref{lemma:char1} relied on the fact that if a modular set has modulus $2N$ and $x\in A$ then $x+N \pmod{2N}\not\in A$. We will use this fact repeatedly throughout the proofs of the following statements.
\end{remark}

\begin{lemma}\label{lemma:char3}
There does not exist a modular set $A$ modulo $2N$ with $\lambda(A)=3$.
\end{lemma}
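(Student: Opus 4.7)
The plan is to obtain a contradiction by pinpointing one specific element of $\{0,\dots,2N-1\}$ that must be mod-covered but cannot be. I will focus on the element $1$.

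First, I will unpack what $\lambda(A) = 3$ says about $A$. From Definition \ref{def:char} applied with modulus $2N$, the equation $\lambda(A) = 2\max(A) - 2N + 1 = 3$ gives $\max(A) = N + 1$, so $N+1 \in A$. Then Remark \ref{remark:mod2N}, applied to $x = N+1$, yields $(N+1) + N \equiv 1 \pmod{2N} \notin A$, so $1 \notin A$.

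Next, since $A$ is a modular set and $1 \notin A$, by definition $1$ must be mod-covered: there exist $x, y \in A$ with $x < y$ such that $2y - x \equiv 1 \pmod{2N}$. The crucial observation is that the integer $2y - x$ is tightly pinched between two explicit bounds. Because $0 \le x < y \le \max(A) = N+1$, one has $y \ge 1$, whence $y + 1 \le 2y - x \le 2y \le 2N + 2$, so $2y - x \in \{2, 3, \dots, 2N + 2\}$. The only element of this range congruent to $1 \pmod{2N}$ is $2N + 1$. Thus $2y - x = 2N + 1$, and combining this with $y \le N+1$ and $x \ge 0$ forces $y = N + 1$ and $x = 1$. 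But $1 \notin A$, contradicting $x \in A$.

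I do not expect a substantial obstacle. The only step that requires any thought is recognizing that the bound $y \le N + 1$ coming from $\max(A)$ confines the integer $2y - x$ to a window of width $2N + 1$ inside a single period, so that exactly one residue class mod $2N$ is available. Once this range calculation is in place, uniqueness of the pair $(x, y) = (1, N+1)$ and the contradiction with $1 \notin A$ are immediate; the rest of the write-up is bookkeeping.
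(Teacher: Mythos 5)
Your proposal is correct and follows essentially the same route as the paper's proof: derive $\max(A)=N+1$ and $1\notin A$ via Remark \ref{remark:mod2N}, then show the only way to mod-cover $1$ forces $2y-x=2N+1$ with $y=N+1$ and $x=1$, contradicting $1\notin A$. Your bounding of $2y-x$ by $y+1\le 2y-x\le 2N+2$ is just a slightly streamlined version of the paper's case analysis ruling out $2y-x=1$ and $y<N+1$.
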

\begin{proof}
Let $A$ be a modular set with modulus $2N$ where $N\in \N$ and $\lambda(A)=3$. One deduces that $\max(A)=N+1$ from the definition of $\lambda$ and $1,N\not\in A$ by \mbox{Remark \ref{remark:mod2N}}. Since $1\not\in A$, it must be mod-covered by $A$ by the definition of a modular set. That is, there exist $x,y\in A$ with $x<y$ such that $2y-x\equiv 1\pmod{2N}$. Since $0<y<2N$, one deduces that $2y-x=1$ or $2N+1$. Since $y>1$ we also know that $2y-x\ge y+1>1$ and therefore $2y-x\ne 1$. If $y<N$, then $2y-x<2N-x<2N+1$. Therefore, if $2y-x=2N+1$, then $y\ge N$ and $y=N+1=\max(A)$ necessarily. Finally, if $y=N+1$, then $2y-(2N+1)=x=1\in A$, a contradiction.
\end{proof}

Lemmas \ref{lemma:char1} and \ref{lemma:char3} were proven by Rolnick \cite{R17} in the case of independent Stanley sequences. We have proved these statements here as a warm-up for the upcoming more involved proofs.

\begin{remark}\label{remark:bigN}
In \cite{MR16}, an operation was introduced that allows one to combine the modular sets. If $A$ and $B$ are modular sets modulo $N$ and $M$ then $A\otimes B:=A+N\cdot B$ is a modular set modulo $NM$ with $\lambda(A\otimes B)=\lambda(A)+N\cdot \lambda(B)$. Through the following proofs, we will assume that $N$ is ``large.'' Let $\{0,1\}$ be the modular set of modulus $3$ with character $0$. If $A$ is a modular set modulo $2N$ then $A\otimes \{0,1\}$ is a modular set modulo $3 \cdot 2N$ with the same character $\lambda$. Thus, if we show that there is no modular set $A$ with odd character $\lambda$ of modulus $2N$ where $N>100$ (or any fixed number), then we have shown there exist no modular sets $A$ of character $\lambda$.
\end{remark}

\subsection{Character $\lambda=5$}
\begin{proposition}\label{prop:char5}
There does not exist a modular set $A$ modulo $2N$ with $\lambda(A)=5$.
\end{proposition}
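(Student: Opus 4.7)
The plan is to extract basic constraints, case-analyze on a key small element, apply Lemma~\ref{lemma:restriction} to relate $A$ to a greedy Stanley sequence, and derive a contradiction from the sequence's structure. By Definition~\ref{def:char}, $\lambda(A) = 5$ forces $\max(A) = N+2$, and Remark~\ref{remark:mod2N} yields $N, 2 \notin A$. Since $2$ must be mod-covered, the only available pairs $(x,y) \in A^2$ with $2y-x\equiv 2\pmod{2N}$ are $(0,1)$ (yielding $2y-x=2$) and $(0, N+1)$ (yielding $2y-x = 2N+2$), so $A$ contains exactly one of $1$ and $N+1$ (they cannot both belong, since otherwise $(1, N+1, 1)$ would be a mod-AP in $A$).

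In Case I ($1 \in A$), the mod-APs $(1, N+2, 3)$ and $(0, N+2, 4)$ exclude $3, 4$ from $A$, so $A \cap \{0, \ldots, 4\} = \{0, 1\}$. An enumeration for the mod-covering of $5$ (parity kills $(0, y)$; $(1, y)$ demands $y \equiv 3 \pmod N$, giving only $y = 3 \notin A$ or $y = N+3 > \max(A)$; and pairs with $5 \leq x < y$ force $2y-x > 4$ and rule out $2y-x = 2N+5$) shows $5 \in A$. A short check gives $\omega(A) = 4$, so Lemma~\ref{lemma:restriction} yields $A = S(0, 1, 5) \cap \{0, \ldots, 2N-1\}$. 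Computing $S(0, 1, 5) = 0, 1, 5, 6, 8, 13, 14, 17, 19, 31, \ldots$, I would verify that for each candidate position $N+2 \in S(0,1,5)$ the subsequent term lies strictly in $(N+2, 2N)$, forcing it into $A$ and contradicting $\max(A) = N+2$; the small exceptional cases $N \leq 6$ fall to Remark~\ref{remark:mod2N} or a direct wrap-around 3-AP check.

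In Case II ($1 \notin A$, $N+1 \in A$), the argument of Lemma~\ref{lemma:char3} forces $3 \in A$ (as $(3, N+2)$ is the only way to mod-cover $1$). The mod-APs $(0, N+2, 4)$, $(0, 3, 6)$, $(N+1, 0, N-1)$, and $(N+2, 0, N-2)$ then exclude $4, 6, N-1, N-2$ from $A$. A parallel enumeration forces $5 \in A$, and Lemma~\ref{lemma:restriction} gives $A = S(0, 3, 5) \cap \{0, \ldots, 2N-1\}$. Computing $S(0, 3, 5) = 0, 3, 5, 8, 9, 12, 14, 17, 27, \ldots$, I observe that its only consecutive pair in the initial segment is $(8, 9)$, so having both $N+1, N+2 \in A$ forces $(N+1, N+2) = (8, 9)$ and thus $N = 7$; but then the term $12 \in S(0, 3, 5)$ lies in $(9, 13) = (N+2, 2N-1)$, contradicting $\max(A) = N+2$.

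The main obstacle is this last step in each case: one must compute enough of $S(0, 1, 5)$ and $S(0, 3, 5)$ to rule out every candidate position of $N+2$, and verify that the wrap-around modular constraints do not accidentally permit a valid configuration. The uniformity across large $N$ rests on the observation that consecutive gaps in these Stanley sequences stay small relative to the sequence terms, so $N+2$ can never isolate itself as the maximum in a window of length $2N$ once the initial exceptional cases are handled.
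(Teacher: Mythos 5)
Your setup coincides with the paper's: the forced dichotomy ($1\in A$ or $N+1\in A$ from mod-covering $2$), the deductions $3,4\notin A$ resp.\ $3\in A$, the forcing of $5\in A$, and the appeal to Lemma~\ref{lemma:restriction} to get $A=S(0,1,5)\cap\{0,\dots,2N-1\}$ or $A=S(0,3,5)\cap\{0,\dots,2N-1\}$ are exactly Lemmas~\ref{lemma:char5b} and~\ref{lemma:char5a}. The endgame, however, has a genuine gap in your Case I. You propose to ``rule out every candidate position of $N+2$'' in $S(0,1,5)$ by checking that the term following $N+2$ lies below $2N$. Since $N$ is unbounded, this is a claim about \emph{every} gap of the infinite sequence $S(0,1,5)$, and no finite computation of an initial segment can establish it. Moreover, $S(0,1,5)$ is not known to be independent or modular---its terms $0,1,5,6,8,13,14,17,19,31,35,36,\dots$ already behave irregularly, much like $S(0,4)$ discussed in the introduction---so there is no structural result supplying your asserted bound that ``consecutive gaps stay small relative to the sequence terms.'' Indeed, even for an independent sequence the gap at a doubling point is $a_{2^k}-a_{2^k-1}=a_{2^k-1}-\lambda+1$, roughly half the term, and the inequality you need is then equivalent to knowing the character exceeds $5$, i.e.\ to the very thing being proved. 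The paper's Lemma~\ref{lemma:char5b} sidesteps this: since $\omega(A)<5$, the residues $2N-1,\dots,2N-5$ must be genuinely covered by $A$, which forces $N-1\in A$ and then produces a contradiction at $2N-5$ using only the finitely many known facts $0,1,5,6,8\in A$ and $3,7,9\notin A$ together with Remark~\ref{remark:mod2N}. That wrap-around covering argument is the missing idea.

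Case II also contains a concrete error: $(8,9)$ is \emph{not} the only pair of consecutive integers in $S(0,3,5)$. Because $S(0,3,5)=S(0,3,5,8)$ with $\{0,3,5,8\}$ modular modulo $9$, the full sequence is $\{0,3,5,8\}+9T$ where $T$ is the set of sums of distinct powers of $3$, so $(35,36)$, $(89,90)$, etc.\ are also consecutive pairs; hence $N=7$ is not forced and your contradiction disappears for those larger $N$. The case can be repaired---each such $N$ still fails, and more cleanly the paper's Lemma~\ref{lemma:char5a} just notes that $S(A)=S(0,3,5)$ would have character $8\ne 5$---but any repair uses the global modular structure of $S(0,3,5)$, not inspection of an initial segment. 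In both cases your plan extrapolates global gap information of a Stanley sequence from finitely many terms, which is exactly the step that cannot be justified and that the paper's covering-near-$2N$ technique is designed to avoid.
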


We will break the proof of Proposition \ref{prop:char5} into Lemmas \ref{lemma:char5a} and \ref{lemma:char5b}.

\begin{lemma}\label{lemma:char5a}
Let $A$ be a modular set modulo $2N$ with $\lambda(A)=5$. Then, $N+1\not \in A$. 
\end{lemma}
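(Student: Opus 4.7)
The plan is to suppose $N+1 \in A$ and derive a contradiction by forcing $N-1 \in A$; the triple $\{0, N-1, N+1\} \subseteq A$ is then a mod-AP modulo $2N$ via $2\cdot 0 \equiv (N-1)+(N+1) \pmod{2N}$. Throughout I invoke Remark \ref{remark:bigN} to assume that $N$ is as large as needed (say $N \ge 5$), so that the small constants appearing below sit strictly inside $\{0,\dots,2N-1\}$.

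First I would pin down the skeleton of $A$. Definition \ref{def:char} together with $\lambda(A)=5$ gives $\max(A)=N+2$, so $A \subseteq \{0,\dots,N+2\}$. Applying Remark \ref{remark:mod2N} to $0,\,N+1,\,N+2 \in A$ rules out $N,\,1,\,2 \in A$. Since $1 \notin A$, it must be mod-covered, so there exist $x<y$ in $A$ with $2y-x \equiv 1 \pmod{2N}$. A short range check, using $2 \le 2y-x \le 2N+4$, forces $2y-x = 2N+1$ and hence $y \in \{N+1,N+2\}$; the choice $y=N+1$ requires $x=1 \notin A$, leaving only $(x,y)=(3,N+2)$, whence $3 \in A$. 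Two consequences follow immediately: $\{0,3,6\}$ is an honest $3$-AP and $\{0,4,N+2\}$ is a mod-AP (since $2(N+2)\equiv 4 \pmod{2N}$), so $6$ and $4$ are also excluded from $A$.

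The crux of the argument is to exploit the mod-cover of $2N-2$. Because $2N-2 > N+2$ one has $2N-2 \notin A$, so there exist $x<y$ in $A$ with $2y-x \equiv -2 \pmod{2N}$. The same range bound forces $2y-x = 2N-2$, and the admissible pairs are exactly
\[
  (x,y) \in \bigl\{(0,N-1),\ (2,N),\ (4,N+1),\ (6,N+2)\bigr\}.
\]
Since $2,\,4,\,6,\,N \notin A$, the only surviving option is $(0,N-1)$, and therefore $N-1 \in A$. The mod-AP flagged in the opening paragraph now sits inside $A$, contradicting $3$-freeness of $A$ modulo $2N$.

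The main obstacle I anticipate is ensuring that the two case analyses (for mod-covering $1$ and for mod-covering $2N-2$) truly exhaust all possibilities, so that the memberships $3 \in A$ and $N-1 \in A$ are forced rather than merely permitted. The tight bound $A \subseteq \{0,\dots,N+2\}$ makes both case analyses finite and short, and once those memberships are secured the closing contradiction is automatic.
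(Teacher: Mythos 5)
Your proof is correct, and its second half takes a genuinely different route from the paper's. The opening coincides with the paper: from $\lambda(A)=5$ and $N+1\in A$ you get $\max(A)=N+2$, exclude $1,2,N$ via Remark \ref{remark:mod2N}, and the mod-cover of $1$ forces $2y-x=2N+1$, hence $3\in A$; your further exclusions of $4$ (mod-AP $0,N+2,4$) and $6$ (AP $0,3,6$) are valid. The paper then continues upward: it deduces $5\in A$ (it cannot be mod-covered), applies Lemma \ref{lemma:restriction} to conclude $S(A)=S(0,3,5)$, and identifies this with the modular sequence of $\{0,3,5,8\}$ modulo $9$, whose character is $8$, contradicting $\lambda(A)=5$. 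You instead work at the top of the residue range, in the style the paper reserves for Lemma \ref{lemma:char5b}: since $2N-2>\max(A)$ it must be mod-covered, the only admissible pairs are $(0,N-1)$, $(2,N)$, $(4,N+1)$, $(6,N+2)$, and the exclusions already in hand force $N-1\in A$, which yields the mod-AP $N-1,0,N+1$ (as $2\cdot 0-(N-1)\equiv N+1\pmod{2N}$), contradicting $3$-freeness modulo $2N$. Your route buys some economy: it uses only mod-covering, so it bypasses Lemma \ref{lemma:restriction}, the implicit bound $\omega(A)<\lambda(A)$ behind the paper's covered-versus-mod-covered step, and the fact that the character is determined by the sequence (needed when the paper transfers $\lambda=8$ back to $A$). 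The paper's route, in exchange, illustrates the ``identify the whole sequence and compare characters'' technique that recurs in the later lemmas; both arguments are sound, yours being the more self-contained.
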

\begin{proof}
Suppose that $N+1\in A$. Observe that $\max(A)=N+2$ and $2$ is mod-covered by $0,N+1,2$ and $1,N\not\in A$. Since $1\not\in A$, there exist $x,y\in A$ with $x<y$ such that $x,y,1$ form a mod-AP. Since $y>0$, we deduce that $2y-x=2N+1$ which further implies that $y=N+2$ and $x=3$. Since we now have $3\in A$, we see that $A=S(0,3,5)\cap\{0,1,\dots,2N-1\}$ by Lemma \ref{lemma:restriction} and therefore $S(A)=S(0,3,5)$. A quick computation shows that $S(0,3,5)=S(B)$ where $B=\{0,3,5,8\}$, a modular set modulo $9$ with character $\lambda(B)=8$. Therefore, $\lambda(A)=8$ since $S(A)=S(B)$. Contradiction.
\end{proof}

\begin{lemma}\label{lemma:char5b}
There does not exist a modular set $A$ of modulus $2N$ and $\lambda(A)=5$ with $N+1\not\in A$. 
\end{lemma}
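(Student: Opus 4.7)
The plan is to deduce constraints on $A$ via mod-coverage and reach a contradiction through a case split on whether $1 \in A$. From Definition \ref{def:char} and $\lambda(A) = 5$, one has $\max(A) = N + 2 \in A$. Applying Remark \ref{remark:mod2N} to $0$ and $N + 2$ forces $N, 2 \notin A$; the mod-AP $(0, N+2, 4)$ (from $2(N+2) \equiv 4 \pmod{2N}$) forces $4 \notin A$; and $N + 1 \notin A$ by hypothesis. By Remark \ref{remark:bigN}, I may assume $N$ is sufficiently large.

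In Case 1, suppose $1 \notin A$. Then $1$ must be mod-covered. In the range $0 < 2y - x \leq 2N + 4$, the condition $2y - x \equiv 1 \pmod{2N}$ forces $2y - x = 2N + 1$, and the only valid $y \in A \cap \{N+1, N+2\}$ is $y = N+2$, giving $x = 3$, so $3 \in A$. A parallel analysis of $2y' - x' \equiv 2 \pmod{2N}$ has no solution with $x', y' \in A$: the only candidate is $(x', y') = (0, N+1)$, which fails since $N + 1 \notin A$. Hence $2$ cannot be mod-covered, a contradiction.

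In Case 2, suppose $1 \in A$. The mod-AP $(2N-1, 0, 1)$ forces $2N - 1 \notin A$; its unique mod-cover $(5, N+2)$ (other candidates require $y \in \{N, N+1\}$) forces $5 \in A$, and then the AP $(1, 3, 5)$ forces $3 \notin A$. I then expect the following cascade: $6 \in A$ (since $2y - x = 6$ admits no pair in $A \cap \{0, 1, 5\}$); $7 \notin A$ from the AP $(5, 6, 7)$; $8 \in A$ (since $2y - x = 8$ admits no pair in $A \cap \{0, 1, 5, 6\}$); $N - 1 \in A$ from mod-covering $2N - 3$ (the alternative $(7, N+2)$ being unavailable since $7 \notin A$); $N - 2 \in A$ from mod-covering $2N - 5$ (the only valid pair is $(1, N-2)$); and finally $N - 3 \notin A$ from the AP $(N-3, N-2, N-1)$.

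With these memberships established, the final contradiction should come from mod-covering $2N - 6$. In the allowable range, $2y - x$ must equal $2N - 6$, which is even, so $x$ must be even. Writing $y = N - 3 + x/2$, the bound $y \leq N + 2$ restricts $x \leq 10$, so $x \in A \cap \{0, 2, 4, 6, 8, 10\} = \{0, 6, 8\}$ (using $10 \notin A$ from the AP $(0, 5, 10)$). But these give $y \in \{N - 3, N, N + 1\}$, none of which lie in $A$. Hence $2N - 6$ cannot be mod-covered, contradicting that $A$ is a modular set. The main obstacle is the Case 2 chain of deductions, where each forced element must be checked against all candidate mod-covers; once this structure is in hand, the final contradiction is a short range-and-parity argument.
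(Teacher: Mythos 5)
Your overall strategy is sound and, apart from packaging, it is essentially the paper's: force $1,5\in A$ (and hence $6,8\in A$, $3,7\notin A$) from coverage conditions near $0$, then chase how $2N-1,2N-3,2N-5,\dots$ must be covered until no cover exists. Your Case 1 is just the paper's opening move recast as a case split (the paper forces $1\in A$ directly because $2$ can only be mod-covered by $0,1$ once $N+1\notin A$ rules out the pair $2,N+2$; your candidate list there should also mention and discard $(2,N+2)$, though $2\notin A$ makes this immediate). Getting $5\in A$ from the cover of $2N-1$ rather than from $\omega(A)<\lambda(A)$ is a nice touch, since it avoids invoking the omitted-set bound.

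There is, however, one genuine gap at the $2N-5$ step. You assert that the only valid cover of $2N-5$ is $(1,N-2)$, but the pair $(9,N+2)$ also satisfies $2(N+2)-9=2N-5$, and you never establish $9\notin A$. Without that, the forced conclusions $N-2\in A$ and $N-3\notin A$ are not justified, and your final argument at $2N-6$ (which uses $N-3\notin A$) collapses in the branch $9\in A$. The fix is one line: $1,5\in A$ and $1,5,9$ is an AP, so $9\notin A$ (equivalently, $9\notin S(0,1,5)$). You should also note that once the cover of $2N-5$ is pinned to $(1,N-2)$, you are already done: $N-2\in A$ together with $0,N+2\in A$ gives the mod-AP $N-2,0,N+2$ (since $2\cdot 0-(N-2)\equiv N+2 \pmod{2N}$), contradicting $3$-freeness modulo $2N$. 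This is exactly how the paper concludes, so your further step to $2N-6$, while correct once the gap above is filled, is unnecessary.
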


\begin{proof}
Let $A$ be a modular set modulo $2N$ with $\lambda(A)=5$ and $N+1\not\in A$. Observe that $\max(A)=N+2$ and $2\not\in A$. Since $2\not\in A$, there exist $x,y\in A$ that mod-cover $2$. A quick computation shows that we require $x=0$ and $y=1$. Thus $1\in A$ and we see that $3$ is mod-covered by $1,N+2,3$ and $4$ is mod-covered by $0,N+2,4$. Is $5\in A$? If not, then there exist $x,y\in A$ that cover $5$ mod $2N$ since $5>\omega(A)$. This is impossible and thus $5\in A$. Hence, $S(A)=S(0,1,5)=\{0,1,5,6,8,13,\dots\}$ by Lemma \ref{lemma:restriction}.

Since $N$ is ``big,'' we know that $2N-1,2N-2,2N-3,2N-4,\dots,N+3\not\in A$. Hence, these numbers are mod-covered by $A$ and are in fact \emph{covered} by $A$ since $\omega(A)<5$. We see that $2N-1$ is covered by $5,N+2$ and $2N-2$ is covered by $6,N+2$. However, we can only cover $2N-3$ by $1,N-1$ which implies $N-1\in A$. We see $2N-4$ is covered by $8,N+2$. We know $2N-5\not\in A$ and is therefore covered by $x,y\in A$ with $x<y$. We see that $y\ne N+2$ otherwise $9\in A$, a contradiction. We also see that $y\ne N-1$ otherwise $3\in A$, a contradiction. We could cover $2N-5$ by $1,N-2$, but this is a contradiction because then $A$ contains the mod-AP $N-2,0,N+2$. Hence, $y<N-2$. However, $2N-5=2y-x\le 2(N-3)+x$, a contradiction.

Therefore, there does not exist a modular set $A$ of modulus $2N$ with $\lambda(A)=5$ with $N+1\not\in A$.
\end{proof}

The techniques from Lemmas \ref{lemma:char5a} and \ref{lemma:char5b} will be used repeatedly in the following propositions and lemmas.

\subsection{Character $\lambda=9$}

\begin{proposition}\label{prop:char9}
There does not exist a modular set $A$ modulo $2N$ with $\lambda(A)=9$.
\end{proposition}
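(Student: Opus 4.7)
The plan is to follow the template of Proposition \ref{prop:char5}: use the definition of the character to fix $\max(A)$, analyze how small elements must be mod-covered by $A$, split into a few cases based on the resulting initial segment, and in each case derive a contradiction either by identifying $S(A)$ with a modular Stanley sequence of a different character (as in Lemma \ref{lemma:char5a}), or by the big-$N$ technique (as in Lemma \ref{lemma:char5b}).

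Concretely, from $\lambda(A) = 9$ and Definition \ref{def:char} I first deduce $\max(A) = N + 4$, hence by Remark \ref{remark:mod2N} both $N$ and $4$ lie outside $A$. Since $4 \notin A$ it must be mod-covered: enumerating pairs $x < y$ in $A$ with $2y - x \equiv 4 \pmod{2N}$, and using $4 \notin A$ to rule out $(x,y) = (4, N+4)$, I am left with the options $2 \in A$ (from $2y - x = 4$, forced to $(x,y)=(0,2)$), or $N+2 \in A$ (from $(x,y) = (0, N+2)$), or $\{2, N+3\} \subset A$ (from $(x,y)=(2, N+3)$). These collapse into two principal cases: $2 \in A$, or $N + 2 \in A$ with $2 \notin A$.

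Within each case I iterate two observations. First, the mod-covering requirement applied in turn to the small missing values among $1, 2, 3, 5, 6, 7, 8$ severely restricts further membership, because for each such $x$ only a handful of pairs $(y, 2y - x \bmod 2N)$ are compatible with the elements already placed, and Remark \ref{remark:mod2N} kills many of these by forbidding $y$ and $y+N$ from simultaneously lying in $A$. Second, since $\omega(A) < 9$, every $x \in \{9, \dots, 2N - 1\} \setminus A$ must be \emph{actually} covered by $A$, not merely mod-covered. Once a sufficient initial segment $a_0 < \dots < a_k$ of $A$ is pinned down with $a_k > \omega(A)$, Lemma \ref{lemma:restriction} gives $S(A) = S(a_0, \dots, a_k)$; I then attempt to recognize this sequence as arising from a modular set of character different from $9$, completing that branch by contradiction. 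In the remaining branches I run the big-$N$ argument: walking $x$ downward from $2N - 1$, each such $x$ must be covered by some pair $(u, v) \in A^2$ with $v$ close to $\max(A) = N + 4$, and each forced covering either drags a small element into $A$ that conflicts with the prior analysis or produces a mod-AP with the existing top elements.

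The main obstacle is the increased combinatorial complexity over the character-$5$ argument: because $\max(A) = N + 4$ rather than $N + 2$, there are more candidate top elements ($N + 1, N + 2, N + 3$) and more small elements ($1$ through $8$) whose coverings must be tracked, so the case tree branches more widely. The delicate point will be the big-$N$ step in the sub-case where $2 \in A$ but neither $1$ nor $3$ is forced immediately: I will need to verify that every attempt to cover $2N - 1, 2N - 2, 2N - 3, \ldots$ leads to a conflict, without overlooking a salvaging covering pair whose second coordinate is close to $N + 4$ and whose first coordinate is a small element not yet ruled out.
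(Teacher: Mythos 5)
Your setup is sound: from $\lambda(A)=9$ you correctly get $\max(A)=N+4$, hence $N,4\notin A$ by Remark \ref{remark:mod2N}, and your enumeration of the ways to mod-cover $4$ does yield the exhaustive dichotomy ``$2\in A$'' or ``$N+2\in A$ and $2\notin A$.'' The general method you describe (forced mod-coverings of small elements, Lemma \ref{lemma:restriction} to pin down $S(A)$, then the big-$N$ walk down from $2N-1$) is exactly the paper's technique. But what you have written is a plan, not a proof: the entire substance of this proposition, beyond the template already established for $\lambda=5$, is the execution of the case analysis, and you never execute it. Phrases such as ``I then attempt to recognize this sequence as arising from a modular set of character different from $9$'' and ``I will need to verify that every attempt to cover $2N-1,2N-2,2N-3,\ldots$ leads to a conflict, without overlooking a salvaging covering pair'' concede that the decisive steps are unverified. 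Nothing in the proposal establishes that any particular branch actually terminates in a contradiction.

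For comparison, the paper's proof splits instead on membership of the top elements: first $N+3\in A$ is ruled out (Lemma \ref{lemma:char9a}), then $N+1$ (Lemma \ref{lemma:char9b}), then $N+2$ (Lemma \ref{lemma:char9c}), and finally the case $N+1,N+2,N+3\notin A$ (Lemma \ref{lemma:char9d}). These branches behave very differently: some die quickly on an un-mod-coverable small element (e.g.\ $6$ in the $N+1$ case), while others force a cascade of new elements ($N-1$, then $N-2$) and only collapse far down the walk --- in the $N+3$ case the contradiction arrives at $2N-14$, after checking that $2N-7,\dots,2N-13$ are all covered and that none of $N-3,\dots,N-7$ can be adjoined. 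Your coarser two-case split on how $4$ is covered would still have to refine into essentially these same subcases (within ``$2\in A$'' one must still decide which of $N+1,N+2,N+3$ lie in $A$), and the depth and delicacy of the forced-covering chains is precisely the part that cannot be waved through. Until those chains are worked out branch by branch, the proposition is not proved.
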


We break the proof of Proposition \ref{prop:char9} into Lemmas \ref{lemma:char9a}, \ref{lemma:char9b}, \ref{lemma:char9c}, and \ref{lemma:char9d}. Through case by case analysis, we will eliminate all possible sets $A$.

\begin{lemma}\label{lemma:char9a}
Let $A$ be a modular set modulo $2N$ with $\lambda(A)=9$. Then $N+3\not\in A$.
\end{lemma}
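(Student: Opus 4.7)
My plan is to assume for contradiction that $N+3 \in A$ and derive a contradiction. From $\lambda(A) = 9$ we have $\max(A) = N+4$, so $N+4 \in A$, and Remark \ref{remark:mod2N} gives $N, 3, 4 \notin A$. A useful general observation: for any $j$, both $(0, N+j, 2j)$ and $(N-j, 0, N+j)$ are mod-APs modulo $2N$, since $2(N+j) \equiv 2j$ and $-(N-j) \equiv N+j \pmod{2N}$.

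\emph{Forcing the initial segment.} Enumerating candidate pairs $(x,y) \in A^2$ with $x < y$ and $2y - x \equiv 3 \pmod{2N}$ yields exactly $\{1,2\}, \{1,N+2\}, \{5,N+4\}$; the first gives the 3-AP $\{0,1,2\}$ and the second forces the 3-AP $\{N+2, N+3, N+4\}$, so $5 \in A$. A parallel analysis for the mod-cover of $4$ forces $2 \in A$. The mod-AP $(0, N+3, 6)$ excludes $6$; the only candidate mod-covers of $7$ are $\{5,6\}$ and $\{1, N+4\}$, both unavailable, so $7 \in A$; and the APs $\{2,5,8\}, \{5,7,9\}, \{0,5,10\}$ exclude $8, 9, 10$. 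Since $6$ is mod-covered but not covered, $\omega(A) = 6$, and Lemma \ref{lemma:restriction} applied at $a_3 = 7$ identifies $S(A) = S(0, 2, 5, 7)$.

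\emph{Large-$N$ covering.} I would then follow the technique of Lemma \ref{lemma:char5b}: each $2N - k$ with small $k$ lies outside $A$ and above $\omega(A)$, so must be \emph{covered} by a pair $(u,v) \in A$. The covers of $2N-2$, $2N-3$, and $2N-6$ force $N-1, 11, N-2 \in A$ respectively, after ruling out alternative pairs by the exclusions already established. The mod-APs $(N-3, 0, N+3)$, $(N-4, 0, N+4)$, $(N-5, N-1, N+3)$, $(N-6, N-1, N+4)$, $(N-7, N-2, N+3)$, $(N-8, N-2, N+4)$ would each place a 3-AP inside $A$ if the corresponding $N - k$ belonged to $A$; so $N - k \notin A$ for every $k \in \{3, 4, 5, 6, 7, 8\}$.

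\emph{The contradiction.} Attempting to cover $2N - 14$: the admissible $v \in A$ satisfying $v \ge N-7$ reduce to $\{N-2, N-1, N+3, N+4\}$, and the required $u = 2v - (2N-14)$ takes values $10, 12, 20, 22$ respectively. Each is forbidden — $10 \notin A$ directly, while $12, 20, 22$ are excluded by the APs $\{2,7,12\}$, $\{2,11,20\}$, $\{0,11,22\}$. Hence $2N - 14$ cannot be covered, contradicting modularity of $A$. I expect the main obstacle to be the bookkeeping of the third paragraph: verifying that no additional element of $A$ sneaks into the critical range $[N-7, N+4]$ to supply an extra $v$-candidate, and that each of the listed mod-AP exclusions remains airtight as the forced set of small elements grows.
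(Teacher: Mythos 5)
Your route is essentially the paper's: force the initial segment $\{0,2,5,7\}$ (the paper also records $11\in A$ and writes $S(A)=S(0,2,5,7,11)$, which is the same sequence), then force $N-1,N-2\in A$ from the covers of $2N-2$ and $2N-6$, exclude $N-3,\dots,N-7$, and contradict at $2N-14$. However, there is one concrete hole: you never rule out $N+1\in A$, and your final step silently depends on this. Since $16\in S(0,2,5,7)$ and $A=S(0,2,5,7)\cap\{0,\dots,2N-1\}$, we have $16\in A$; so if $N+1$ were in $A$, the pair $(16,N+1)$ would cover $2N-14$ because $2(N+1)-16=2N-14$, and your reduction of the admissible $v$'s to $\{N-2,N-1,N+3,N+4\}$ — hence the contradiction — collapses. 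This is exactly the sort of ``element sneaking into the critical range'' you worried about. The repair is one line and is your own ``useful observation'' with $j=1$: $(0,N+1,2)$ is a mod-AP and $0,2\in A$, so $N+1\notin A$; the paper states this explicitly right after deducing $2\in A$, and with it your argument matches the published proof (your skipping of the checks that $2N-7,\dots,2N-13$ are covered is harmless, since those elements could only force new members of $A$ in ranges you have already excluded).

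A smaller expository point: ``since $6$ is mod-covered but not covered, $\omega(A)=6$'' only yields $\omega(A)\ge 6$, whereas Lemma \ref{lemma:restriction} at $a_3=7$ needs $\omega(A)\le 6$. The upper bound does hold: any $z$ that is mod-covered but not covered must satisfy $z+2N=2y-x\le 2\max(A)$, i.e.\ $z\le\lambda(A)-1=8$, and $7\in A$ while $8$ is covered by $2,5$. This wrap-around bound is what the paper uses implicitly whenever it invokes $\omega(A)<\lambda(A)$ for modular sets, but since your application of the restriction lemma hinges on the upper bound, you should state it.
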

\begin{proof}
Suppose that $N+3\in A$. Since $N+4=\max(A)$, we see that $N+2\not\in A$ otherwise $A$ would contain the AP $N+2,N+3,N+4$. We also see that $3,4\not\in A$ and $6$ is mod-covered by $0,N+3$ and $8$ is mod-covered by $0,N+4$. The only way to mod-cover $3$ is with $5,N+4$ and every valid way to mod-cover $4$ requires $2$; hence, $2,3\in A$. Since $0,2\in A$, we see that $N+1\not\in A$. There is no way to mod-cover $7$, so $7\in A$. We see that $9$ is covered by $5,7$ and $10$ is covered by $0,5$. However, $11$ cannot be covered, so $11\in A$ and thus we have deduced that $S(A)=S(0,2,5,7,11)=\{0,2,5,7,11,13,16,18,28,\dots \}$. 

Now we examine how $2N-1,2N-2,\dots$ are covered by $A$. We see that $2N-1$ is covered by $7,N+3$ but the only way to cover $2N-2$ is with $0,N-1$. Hence, $N-1\in A$. Similar analysis shows that $2N-3$ is covered by $11,N+4$, the element $2N-4$ is covered by $2,N-1$, and the element $2N-5$ is covered by $11,N+3$. However, $2N-6$ cannot be covered by $x<y$ using $y=N+4, N+3$ or $N-1$. We see that $y=N-3$ and $y=N-2$ are the only possible remaining choices. However, $N-3$ cannot be in $A$, otherwise $A$ contains the AP $N-3,0,N+3$. Therefore, $y=N-2$ and $x=2$ which implies that $N-2\in A$.

Further analysis shows that $2N-7,\dots,2N-13$ are covered by $A$. However, $2N-14$ cannot be covered by $x,y\in A$ with $y\in\{N-2,N-1,N+3,N+4\}$. Therefore, $y\in\{ N-7,N-6,N-5,N-4,N-3\}$. However, $N-3,N-4\not\in A$ by Remark \ref{remark:mod2N}. Furthermore, $N-5\not\in A$ otherwise $A$ would contain the AP $N-5,N-1,N+3$. Similarly, one deduces that $N-6,N-7\not\in A$. Contradiction.
\end{proof}

\begin{lemma}\label{lemma:char9b}
Let $A$ be a modular set modulo $2N$ with $\lambda(A)=9$ and $N+3\not\in A$. Then $N+1\not\in A$.
\end{lemma}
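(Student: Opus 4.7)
The plan is to assume for contradiction that $N+1\in A$ and then progressively force the small part of $A$ until either a mod-AP appears inside $A$ or some element forced to lie outside $A$ cannot be mod-covered. From $\lambda(A)=9$ and Definition~\ref{def:char}, $\max(A)=N+4\in A$; Remark~\ref{remark:mod2N} applied to $0$, $N+1$, $N+4$ gives $1,4,N\notin A$, and the mod-AP generated by the pair $(0,N+1)$ forces $2\notin A$. Combined with the hypothesis $N+3\notin A$, this pins down $\{1,2,4,N,N+3\}\cap A=\emptyset$.

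Next I would mod-cover the small forbidden residues. Enumerating the pairs $(x,y)\in A^2$ with $x<y\le N+4$ and $2y-x\equiv 4\pmod{2N}$ yields exactly $(0,2),(2,3),(0,N+2),(2,N+3),(4,N+4)$; the exclusions leave only $(0,N+2)$, so $N+2\in A$. The analogous enumeration for $2y-x\equiv 1\pmod{2N}$ gives candidates $(1,N+1),(3,N+2),(5,N+3),(7,N+4)$, so at least one of $3$ or $7$ must lie in $A$.

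Two sub-cases follow. If $3\in A$, then $2\cdot 3\equiv(N+2)+(N+4)\pmod{2N}$ shows that $\{3,N+2,N+4\}\subset A$ is a 3-term mod-AP with $3$ in the middle, violating 3-freeness. If instead $7\in A$ and $3\notin A$, then every pair that could mod-cover $5$ involves one of $1,3,N+3$, forcing $5\in A$; this in turn forces $6\notin A$ (otherwise $5,6,7$ is a genuine AP in $A$), and every pair capable of mod-covering $6$ requires one of $2,3,4,N+3$ in $A$, giving the desired contradiction.

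The main obstacle---and the genuine ``$\lambda=9$'' phenomenon---is the wrap-around mod-AP $\{3,N+2,N+4\}$ that kills the first sub-case. It is invisible from the forward relation $z\equiv 2y-x$ alone: one has to track the backward orientation $z\equiv 2x-y\pmod{2N}$ as well to spot it. The remaining steps are routine enumerations of pairs $(x,y)$ whose residue $2y-x\pmod{2N}$ can match a given small target, in the spirit of the earlier lemmas.
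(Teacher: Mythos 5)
Your proof is correct and follows essentially the same route as the paper: forcing $N+2\in A$ to mod-cover $4$, eliminating $3$ via the mod-AP $N+2,3,N+4$, forcing $5,7\in A$, and reaching the same final contradiction that $6$ can neither lie in $A$ nor be mod-covered. The only cosmetic difference is that you organize the argument as a case split on whether $(3,N+2)$ or $(7,N+4)$ mod-covers $1$, and you obtain $5\in A$ from the impossibility of mod-covering $5$ rather than from the need to mod-cover $3$.
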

\begin{proof}
Suppose that $N+1\in A$. We see that $2$ and $8$ are mod-covered by $A$ and that $1,4\not\in A$. The only way to mod-cover $4$ is with $0,N+2$; therefore, $N+2\in A$. Observe that $3\not\in A$ otherwise $A$ would contain the mod-AP $N+2,3,N+4$. Therefore, the only way to mod-cover $1$ is with $7,N+4$ which implies $7\in A$. The only way to mod-cover $3$ is with $5,N+4$ which implies $5\in A$. Since $5,7\in A$, we see that $6\not\in A$ yet unfortunately $6$ cannot be mod-covered by $A$. Contradiction. 
\end{proof}

\begin{lemma}\label{lemma:char9c}
Let $A$ be a modular set modulo $2N$ with $\lambda(A)=9$ and $N+1,N+3\not\in A$. Then $N+2\not\in A$.
\end{lemma}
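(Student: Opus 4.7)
The plan is to suppose for contradiction that $N+2 \in A$ and split on whether $1 \in A$, following the case-by-case template of Lemmas \ref{lemma:char9a} and \ref{lemma:char9b}. From $\lambda(A)=9$ one reads off $\max(A)=N+4$, so $N+4 \in A$, and Remark \ref{remark:mod2N} rules out $2,4,N \in A$.

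I would first dispatch the easier case $1 \notin A$. Enumerating the pairs $(x,y) \in A$ with $2y-x \equiv 2 \pmod{2N}$ leaves only $(6,N+4)$ available, so $6 \in A$; enumerating the pairs for mod-covering $1$ then forces $3 \in A$ or $7 \in A$. The subcase $3 \in A$ produces the AP $\{0,3,6\} \subset A$; the subcase $3 \notin A$, $7 \in A$ leaves no way to mod-cover $5$ (every surviving candidate needs $3$), forcing $5 \in A$ and producing the AP $\{5,6,7\} \subset A$.

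The main case $1 \in A$ opens with the observation that $2(N+2)-1 \equiv 3 \pmod{2N}$, so $\{1,3,N+2\}$ would form a mod-AP and therefore $3 \notin A$. The same style of enumeration then forces $5 \in A$ and $6 \in A$ (every candidate pair for mod-covering $5$ or $6$ uses $3$), so $A \supseteq \{0,1,5,6,N+2,N+4\}$. The pivotal step is to examine the element $2N-4$: since $\omega(A) < 9$ and $N$ is taken sufficiently large via Remark \ref{remark:bigN}, $2N-4$ must be genuinely covered by a pair in $A$. Restricting $y$ to $A \cap [N-2,N+4]$ and computing $x = 2y - (2N-4)$, the only surviving options are $(x,y) = (0,N-2)$ and $(x,y) = (8,N+2)$.

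I expect the main obstacle to be the final observation that each of these two surviving possibilities immediately produces a mod-AP inside $A$. If $8 \in A$, then $\{0,8,N+4\}$ is a mod-AP because $2(N+4) \equiv 8 \pmod{2N}$; if $N-2 \in A$, then $\{1,N-2,N+4\}$ is a mod-AP because $2 \cdot 1 - (N-2) \equiv N+4 \pmod{2N}$. Either outcome contradicts $A$ being $3$-free modulo $2N$, closing the proof.
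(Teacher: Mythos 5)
Your proof is correct, but it takes a leaner route than the paper's. The paper never splits on $1$: after ruling out $3$ via the mod-AP $N+2,3,N+4$ (which needs no knowledge of $1$), it forces $5,6\in A$ exactly as you do, notes that $7$ is then covered by $5,6$, and concludes that $1$ cannot be mod-covered, so $1\in A$ --- your Case $1\notin A$ is thereby subsumed, though your handling of it (forcing $6\in A$ from the mod-cover of $2$ and ending in the APs $0,3,6$ or $5,6,7$) is perfectly valid. In the main case the paper does more work before the endgame: it forces $13\in A$, identifies $S(A)=S(0,1,5,6,13)$, and walks down from $2N-1$, using $2N-3$ to force $N-1\in A$, and only then reaches the contradiction at $2N-4$ by forcing $N-2\in A$ against the mod-AP $N-2,0,N+2$. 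You jump straight to $2N-4$ and observe that every admissible cover is already barred by a mod-AP ($0,N+4,8$ if $8\in A$; $N-2,1,N+4$ if $N-2\in A$), which shortens the argument and avoids introducing $13$ and $N-1$ altogether. One small point to spell out: in your enumeration of covers of $2N-4$, the pair $(12,N+4)$ also has $y\in[N-2,N+4]$ and must be discarded; it is, since $0,6\in A$ cover $12$, so $12\notin A$ --- add a sentence so the list of surviving options is visibly complete. Your appeal to $\omega(A)<9$ to upgrade mod-coverage of $2N-4$ to genuine coverage matches the paper's own (implicit) use of $\omega(A)<\lambda(A)$ for modular sets.
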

\begin{proof}
Suppose that $N+2\in A$. We see that $4,8$ are mod-covered by $A$ and $2\not\in A$. Also observe that $3\not\in A$ since otherwise $A$ would contain the mod-AP $N+2,3,N+4$. We see that $5,6\in A$ since there is no way to mod-cover them. Therefore, $3,4,7$ are mod-covered by $A$. This leaves us with no way to mod-cover $1$, so $1\in A$. We see that $9,10,11,12$ are covered and $13$ cannot be covered by $A$. Therefore, $13\in A$ and $S(A)=S(0,1,5,6,13)$.

Observe that $2N-1$ is covered by $5,N+2$ and $2N-2$ is covered by $6,N+2$ and $2N-3$. However, neither $N+2$ nor $N+4$ may be used to cover $2N-3$. Therefore, $2N-3$ is necessarily covered by $1,N-1$ which implies $N-1\in A$. However, we again deduce that $N-1,N+2,N+4$ cannot cover $2N-4$. The only way to cover $2N-4$ requires $N-2\in A$. This is a contradiction since including $N-2$ in $A$ would introduce the mod-AP $N-2,0,N+2$.  
\end{proof}

\begin{lemma}\label{lemma:char9d}
There does not exist a modular set $A$ of modulus $2N$ and $\lambda(A)=9$ with $N+1,N+2,N+3\not\in A$. 
\end{lemma}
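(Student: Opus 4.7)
The plan is to derive a contradiction through a case analysis on the small elements of $A$, exploiting the fact that the hypothesis $N+1, N+2, N+3 \notin A$, together with Remark \ref{remark:mod2N}, leaves $N+4$ as essentially the only element of $A$ in the window $[N, N+4]$. From $\lambda(A) = 9$ we have $\max(A) = N+4 \in A$, and applying Remark \ref{remark:mod2N} to $N+4$ and to $0$ forces $4 \notin A$ and $N \notin A$. Consequently, for any $x \in \{1, \ldots, 8\}$, the only ``large-$b$'' option for mod-covering $x$ is $(a, b) = (8 - x, N+4)$; and the only $b \in A$ that can participate in covering elements near $2N - 1$ are $N+4$ and possibly $N - k$ (for small $k$) when $N - k \in A$.

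I will split first on whether $1 \in A$. If $1 \in A$, then $2 \notin A$ by the AP $0, 1, 2$, and mod-covering $4$ becomes impossible: the small-$b$ options $(0, 2)$ and $(2, 3)$ both require $2 \in A$, while the large-$b$ option requires $a = 4 \in A$. If instead $1 \notin A$, mod-covering $1$ forces $7 \in A$ (the only option is $a = 7, b = N+4$). A second split on whether $2 \in A$ follows: if $2 \notin A$, mod-covering $2$ forces $6 \in A$, after which mod-covering $4$ again has no option; if $2 \in A$, the mod-AP $(2, N+4, 6)$ forces $6 \notin A$, and mod-covering $5$ requires either $3 \in A$ or $5 \in A$. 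In the first sub-case the mod-AP $(3, N+4, 5)$ forces $5 \notin A$, the covering of $2N - 1$ via $(9, N+4)$ forces $9 \in A$, and finally the covering of $2N - 3$ admits only the pairs $(11, N+4)$ and $(1, N-1)$, both of which fail because $11 \notin A$ (from the AP $3, 7, 11$) and $1 \notin A$. In the second sub-case the AP $5, 7, 9$ forces $9 \notin A$, contradicting the requirement $9 \in A$ from the covering of $2N - 1$.

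The main obstacle I anticipate is the bookkeeping: in each branch one must verify that the enumeration of $(a, b)$ pairs for the relevant mod-covering or covering conditions is truly exhaustive. The hypothesis $N+1, N+2, N+3 \notin A$ is essential throughout, as it reduces the available large values of $b \in A$ to just $N+4$ (and possibly $N - 1$), which is precisely what closes each case.
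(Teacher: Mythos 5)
Your proposal is correct and follows essentially the same route as the paper: after forcing $2,7\in A$ (and $1,4\not\in A$), you split on $3\in A$ versus $5\in A$ and reach the same contradictions via the required coverings of $2N-3$ and $2N-1$, respectively. The only differences are cosmetic—explicit preliminary case splits on $1$ and $2$, and deducing $9\in A$ from the covering of $2N-1$ (and excluding $11$ via the AP $3,7,11$) instead of computing $S(0,2,3,7,9)$ explicitly.
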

\begin{proof}
Let $A$ be a modular set modulo $2N$ with $\lambda(A)=9$ and $N+1,N+2,N+3\not\in A$. We see that $8$ is mod-covered by $A$ and $4\not\in A$. The element $2$ is necessarily in $A$ in order to mod-cover $4$. The element $7\in A$ is needed to mod-cover $1$. We break our proof into the cases where either (Case I) $3\in A$ or (Case II) $5\in A$.

\medskip

Case I: Since $3\in A$, we see that $5$ is mod-covered by $A$ and $9\in A$ since it cannot be mod-covered by $A$. We deduce that $S(A)=S(0,2,3,7,9)=\{0,2,3,7,9,10,19,\dots\}$. Now, $2N-1$ is covered by $9,N+4$ and $2N-2$ is covered by $10,N+4$. However, $2N-3$ cannot be covered using $N+4$ and can only be covered by $1,N-1$. This is a contradiction since $1\not\in A$. 

\medskip

Case II: Since $5\in A$, we see that $3,9,10$ are mod-covered by $A$ and $11$ cannot be mod-covered by $A$. Therefore, $11\in A$ and $S(A)=S(0,2,5,7,11).$ Since $9\not\in A$, we cannot use $N+4$ to cover $2N-1$. Hence $2N-1$ cannot be covered, a contradiction.

\medskip

Therefore, a modular set $A$ of modulus $2N$ with $\lambda(A)=9$ and $N+1,N+2,N+3\not\in A$ cannot exist. 
\end{proof}

\subsection{Character $\lambda=11$}
Throughout the remainder of the paper, we will frequently write ``covered'' or ``mod-covered'' to mean ``covered by $A$'' or ``mod-covered by $A$.''

\begin{proposition}\label{prop:char11}
There does not exist a modular set $A$ modulo $2N$ with $\lambda(A)=11$.
\end{proposition}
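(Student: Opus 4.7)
The plan is to follow the case-analysis template used in Propositions~\ref{prop:char5} and~\ref{prop:char9}. Since $\lambda(A)=11$ and the modulus is $2N$, Definition~\ref{def:char} forces $\max(A)=N+5\in A$, and by Remark~\ref{remark:mod2N} we have $5\notin A$. I would split the proof according to which of $N+1,N+2,N+3,N+4$ belong to $A$, noting first that $N+3$ and $N+4$ cannot both lie in $A$ (else $N+3,N+4,N+5$ is an AP), and that $N+k\in A$ forces $k\notin A$ for $k=1,\ldots,5$. This gives roughly four or five subcases, handled in successive lemmas, mirroring Lemmas~\ref{lemma:char9a}--\ref{lemma:char9d}.

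In each case I would first run a \emph{low-end sweep}. Starting from the known large elements of $A$ together with $0$, I list the residues in $\{1,\ldots,10\}$ that are automatically mod-covered, then look at each residue that is neither in $A$ nor yet mod-covered. For such a residue $z$, the condition that $z$ be mod-covered usually pins down a unique pair $(x,y)\in A^2$, forcing a new small element into $A$ or producing a forbidden mod-AP with an already-present element. After a few iterations the initial segment of $S(A)$ is fully determined, and one of three things happens: (i) an immediate contradiction via a forbidden mod-AP; (ii) $S(A)$ coincides with $S(B)$ for some finite $B$ whose modular closure carries a different character, so Lemma~\ref{lemma:restriction} gives a $\lambda$-conflict; or (iii) the low end survives and we must pass to the high end.

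When the low-end survives, I run a \emph{high-end sweep}, examining $2N-1,2N-2,\ldots$ in order. Lemma~\ref{lemma:omitted} and the definition of $\omega$ give $\omega(A)<11$, so every such residue that is not in $A$ must be \emph{covered} (not merely mod-covered) by a pair $x<y$ in $A$. The candidates for $y$ are restricted to the already-identified elements in $[N+1,N+5]$ and to unknown elements just below $N$; each admissible choice either forces an element $N-j\in A$ whose presence creates a mod-AP with an existing entry (the technique that killed $2N-3$, $2N-6$, and $2N-14$ in the $\lambda=9$ proof), or exhausts the list of admissible pairs entirely.

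The main obstacle will be the case where none of $N+1,N+2,N+3,N+4$ is in $A$. There only $0$ and $N+5$ are available as ``large'' elements for mod-covering the small residues, so a cascade of small elements is forced into $A$; I expect this case to split further into two or three subcases depending on which of $\{1,2,3,4\}$ first enters via the mod-covering of a low residue, as happened in Lemma~\ref{lemma:char9d}. Each subcase should then die either by the low-end mechanism (matching a known Stanley sequence with a different character) or by an immediate cover failure at $2N-1$ or $2N-2$. The real difficulty is bookkeeping the cascading forced inclusions and exclusions without missing a branch; a careful table of which residues are in $A$, excluded from $A$, mod-covered, or covered in each subcase should make the verification routine.
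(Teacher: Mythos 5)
Your outline is exactly the strategy the paper follows: Proposition~\ref{prop:char11} is proved there by eliminating $N+4$, $N+2$, $N+3$, $N+1$ from $A$ in turn (Lemmas~\ref{lemma:char11a}--\ref{lemma:char11e}), each time by the low-end forcing you describe (a non-element of $\{1,\dots,10\}$ must be mod-covered, which pins down new elements or creates a forbidden mod-AP, sometimes ending in a character conflict such as $S(0,2,3,9,11)$ having character $20$), and then handling the residual case $N+1,N+2,N+3,N+4\not\in A$ (Lemma~\ref{lemma:char11f}) by a low-end cascade followed by a high-end covering sweep. So there is no disagreement about the approach.

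The genuine gap is that your proposal never executes any of it: every case closure is asserted in the conditional (``I expect this case to split,'' ``each subcase should then die''), and for a statement of this kind the verification \emph{is} the proof --- there is no structural principle guaranteeing in advance that each branch terminates in a contradiction, which is precisely why the conjecture for $\lambda=5,9,11,15$ was open despite the method being known from the $\lambda=1,3$ cases. Moreover, where you do predict details, they are off in ways that matter for the bookkeeping: in the hardest case ($N+1,\dots,N+4\not\in A$) the contradiction does not come from ``an immediate cover failure at $2N-1$ or $2N-2$'' --- both of those are covered by $11,N+5$ and $12,N+5$ once the low end forces $S(A)=S(0,1,3,4,11)$ --- but only after a long high-end sweep that successively forces $N-1$, $N-2$, and $N-4$ into $A$ and fails at $2N-10$ via the forbidden mod-AP $N-5,0,N+5$; similarly the case $N+2\in A$, $N+4\not\in A$ forces $S(A)=S(0,1,7,8,11)$ and dies only at $2N-10$. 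Also note that the elimination cannot be organized purely as ``which of $N+1,\dots,N+4$ lie in $A$'' with four or five independent cases: the paper must split the elimination of $N+4$ according to whether $N+2\in A$ (Lemmas~\ref{lemma:char11a} and~\ref{lemma:char11b}), because the forced low-end elements differ in the two situations. Until those forcing chains are actually carried out and each branch is shown to terminate, the proposal is a plan for a proof rather than a proof.
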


We break the proof of Proposition \ref{prop:char11} into Lemmas \ref{lemma:char11a}, \ref{lemma:char11b}, \ref{lemma:char11c}, \ref{lemma:char11d}, \ref{lemma:char11e}, and \ref{lemma:char11f}. When $\lambda(A)=11$, observe that $\max(A)=N+5$.

\begin{lemma}\label{lemma:char11a}
Let $A$ be a modular set modulo $2N$ with $\lambda(A)=11$ with $N+2\in A$. Then $N+4\not\in A$.
\end{lemma}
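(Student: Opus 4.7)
The plan is to suppose for contradiction that $N+4\in A$ alongside $N+2\in A$ and derive a three-term mod-AP inside $A$. Since $\lambda(A)=11$, Definition \ref{def:char} forces $\max(A)=N+5$, so $\{0,N+2,N+4,N+5\}\subseteq A$. Remark \ref{remark:mod2N} immediately removes $4$ and $5$ from $A$, and applying modular $3$-freeness to the pair $\{N+4,N+5\}$ (or equivalently to $\{N+2,N+4\}$) removes $N+3$. By Remark \ref{remark:bigN} I may take $N$ large enough that no wraparound collisions among these residues occur.

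The key step is the analysis of how $5$ gets mod-covered. Since $5\notin A$, there must exist $x,y\in A$ with $0\le x<y\le N+5$ and $2y-x\equiv 5\pmod{2N}$. Solving this congruence yields exactly the five candidate pairs
\[
(x,y)\in\{(1,3),\ (3,4),\ (1,N+3),\ (3,N+4),\ (5,N+5)\}.
\]
The exclusions $4,\,N+3,\,5\notin A$ kill three of these pairs, and each of the two survivors $(1,3)$ and $(3,N+4)$ demands $3\in A$.

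Having forced $3\in A$, the contradiction is one line: the triple $\{3,N+2,N+4\}\subseteq A$ satisfies $2\cdot 3 = 6 \equiv (N+2)+(N+4)\pmod{2N}$, so $(N+2,3,N+4)$ is a mod-AP of common difference $N+1$ contained entirely in $A$, violating modular $3$-freeness. The main obstacle in the argument is the mod-cover enumeration for $5$: it must be exhaustive, and one must verify that every pair not forcing $3\in A$ is eliminated by the short exclusion list. Once that is in place, spotting the mod-midpoint relation $2\cdot 3\equiv (N+2)+(N+4)$ is immediate, and I expect the same template — exclude, then find a small element whose mod-cover list is almost entirely blocked — to drive the subsequent lemmas in the $\lambda=11$ case.
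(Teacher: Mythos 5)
Your proof is correct and is in substance the same as the paper's: the paper excludes $3$ up front (since $N+2,3,N+4$ would be a mod-AP) and then observes that $5$ cannot be mod-covered at all, whereas you run the same two facts in the opposite order, letting the mod-cover of $5$ force $3\in A$ and then invoking that mod-AP. The essential content — the exhaustive enumeration of mod-covers of $5$ and the incompatibility of $3$ with $\{N+2,N+4\}$ modulo $2N$ — is identical.
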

\begin{proof}
Assume $N+4\in A$. Observe that $4,8,10$ are mod-covered and $2,3,5,N+3\not\in A$. Contradiction. There is no way to mod-cover $5$. 
\end{proof}

\begin{lemma}\label{lemma:char11b}
Let $A$ be a modular set modulo $2N$ with $\lambda(A)=11$ with $N+2\not\in A$. Then $N+4\not\in A$.
\end{lemma}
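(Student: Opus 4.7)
The plan is to assume for contradiction that $N+4\in A$ and combine a small-element case analysis with a ``near-$2N$'' covering argument in the spirit of Proposition \ref{prop:char5} and Lemma \ref{lemma:char9a}. Since $\lambda(A)=11$, $\max(A)=N+5$, so $N+5\in A$; the $3$-AP $\{N+3,N+4,N+5\}$ forces $N+3\notin A$, and Remark \ref{remark:mod2N} gives $4,5\notin A$. First I would mod-cover $4$: the equation $2y-x\equiv 4\pmod{2N}$ admits only the pairs $\{0,2\}$, $\{2,3\}$, and $\{6,N+5\}$, so either $2\in A$ or $6\in A$.

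If $2\notin A$, then $6\in A$, and the $3$-AP $\{0,3,6\}$ forces $3\notin A$; the unique mod-cover of $3$ is $\{7,N+5\}$, so $7\in A$. But with $1,3,5,N+3\notin A$, every candidate pair for mod-covering $5$ fails, a contradiction. If $2\in A$, then $\{0,1,2\}$ and the mod-AP $\{0,N+1,2\}$ give $1,N+1\notin A$; if moreover $3\notin A$, the same mod-cover-$3$ step forces $7\in A$ and the same $5$-argument closes the sub-branch. This reduces to the essential subcase $2,3\in A$. Here the mod-AP $\{3,N+5,7\}$ forces $7\notin A$, so mod-covering $1$ via $\{9,N+5\}$ forces $9\in A$, and a short check shows $11$ admits no mod-cover from $A\cap\{0,\ldots,10\}\cup\{N+4,N+5\}$, hence $11\in A$. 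By Lemma \ref{lemma:restriction}, $A=S(0,2,3,9,11)\cap\{0,\ldots,2N-1\}$; direct verification forces $12,14\in A$, and the $3$-APs $\{3,9,15\}$, $\{2,9,16\}$, $\{11,14,17\}$, $\{0,9,18\}$, $\{3,11,19\}$, $\{2,11,20\}$, $\{3,12,21\}$, $\{0,11,22\}$ exclude $15,\ldots,22$ from $A$.

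The closing step is the near-$2N$ argument. Covering $2N-5$ requires $(x,y)\in A\times A$ with $y\in[N-2,N+5]\cap A=\{N-2,N-1,N+4,N+5\}$; checking $x=2y-(2N-5)$ against $A$ rules out every option except $(x,y)=(3,N-1)$, forcing $N-1\in A$. Analogous analyses of $2N-7$ and $2N-9$ force $N-2\in A$ and $N-3\in A$ (both via $x=3$), and then $\{N-3,N-2,N-1\}$ is a $3$-AP in $A$, contradicting $3$-freeness. The main obstacle will be the subcase $2,3\in A$, which requires carefully pinning down $A\cap\{0,\ldots,22\}$ before the top-end covering argument can be executed cleanly.
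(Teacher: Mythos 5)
Your proof is correct, but it reaches the contradiction by a different route than the paper. The paper forces $3\in A$ first (from mod-covering $5$), then $2\in A$ (from mod-covering $4$), arrives at $S(A)=S(0,2,3,9,11)$, and finishes in one stroke by observing that $S(0,2,3,9,11)$ is a modular Stanley sequence of character $20$, contradicting $\lambda(A)=11$; this leans on the invariance of the character under the identification of the sequence. You instead branch on the mod-cover of $4$ (the pairs $\{0,2\},\{2,3\},\{6,N+5\}$), kill the branches $2\notin A$ and $2\in A,\,3\notin A$ via the impossibility of mod-covering $5$, and in the surviving case $2,3\in A$ you recover the same set $\{0,2,3,9,11\}$ and then run a near-$2N$ covering argument: forcing $N-1$, $N-2$, $N-3$ into $A$ from the coverings of $2N-5$, $2N-7$, $2N-9$ and producing the $3$-AP $N-3,N-2,N-1$. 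Your endgame is more computational but also more self-contained (it is exactly the technique of Lemma \ref{lemma:char5b}, Lemma \ref{lemma:char9a}, and Lemma \ref{lemma:char11f}, and does not require identifying the character of $S(0,2,3,9,11)$), at the price of pinning down $A\cap\{0,\dots,22\}$ first; the paper's character identification is shorter. Two cosmetic points: in the branch $2\notin A$ the assertion $1\notin A$ is not actually established (and is not needed, since the candidate pair $(1,3)$ already fails because $3\notin A$ there), and the displayed equality $[N-2,N+5]\cap A=\{N-2,N-1,N+4,N+5\}$ should be stated as a containment of candidates, since $N-2,N-1$ are not yet known to lie in $A$ at that point.
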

\begin{proof}
Assume $N+4\in A$. Observe that $8,10$ are mod-covered and $4,5,N+3\not\in A$. We see that $3\in A$ is needed to mod-cover $5$ and thus $6,7$ are also mod-covered. Since $2$ is required to mod-cover $4$, we have $2\in A$ and $1\not\in A$. We need $9\in A$ to mod-cover $1$ and $11\in A$ since it cannot be mod-covered. Therefore, $S(A)=S(0,2,3,9,11)$, a modular Stanley sequence with character $20$. This is a contradiction with $\lambda(A)=11$.
\end{proof}

Observe that Lemmas \ref{lemma:char11a} and \ref{lemma:char11b} imply that a modular set $A$ modulo $2N$ with $\lambda(A)=11$ cannot contain the element $N+4$.

\begin{lemma}\label{lemma:char11c}
Let $A$ be a modular set modulo $2N$ with $\lambda(A)=11$ with $N+4\not\in A$. Then $N+2\not\in A$.
\end{lemma}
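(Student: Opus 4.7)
The plan is to suppose for contradiction that $N+2\in A$ and, using $\lambda(A)=11$ (so $\max(A)=N+5\in A$) together with $N+4\notin A$, to force so much of $A$'s small-element structure that the element $2N-10$ becomes impossible to cover. This mirrors the strategy of Lemma~\ref{lemma:char5b} and Lemma~\ref{lemma:char9c}.

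First I extract the forced small-element structure. Remark~\ref{remark:mod2N} immediately rules out $2$, $5$, and $N$ from $A$, and the mod-AP $(0,N+2,4)$ forbids $4\in A$. Enumerating all pairs $(x,y)$ with $2y-x\equiv 5\pmod{2N}$ shows that the only candidate covers of $5$ are $(1,3)$, $(3,4)$, and $(1,N+3)$. Since $4\notin A$, every surviving option forces $1\in A$; but then the mod-AP $(1,N+2,3)$ rules out $3\in A$, leaving $(1,N+3)$ as the only valid cover. Thus $1,N+3\in A$. Propagating the resulting mod-APs and APs yields $3,6,9,10,N+1,N-1,N-2,N-3,N-5\notin A$.

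Next I force $7,8,11,12,18,19\in A$ in this order. For each target $z$ I enumerate every pair $x<y$ in the current $A$ with $2y-x\equiv z\pmod{2N}$; in every instance each candidate $y$ has already been excluded from $A$ or forces $x$ to lie outside $A$. For instance, mod-covering $7$ requires $y\in\{4,5,6,N+5\}$ with corresponding $x\in\{1,3,5,3\}$, none of which yields a valid pair; the analyses for $8$, $11$, $12$, $18$, $19$ are parallel. Once these six elements lie in $A$, the ordinary APs $(1,7,13)$, $(0,7,14)$, $(1,8,15)$, $(8,12,16)$, $(7,12,17)$, and $(18,19,20)$ exclude $13,14,15,16,17,20$ from $A$.

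Finally I derive the contradiction at $z=2N-10$. Since $z>N+5=\max(A)$ we have $z\notin A$, and since $z>10\geq \omega(A)$, the remark following Definition~\ref{def:char} demands that $z$ be genuinely covered by some $x<y$ in $A$. The requirement $x=2y-(2N-10)\geq 0$ forces $y\geq N-5$, so $y\in A\cap[N-5,N+5]$. Of the eleven integers in this window, the exclusions above leave only $N-4$, $N+2$, $N+3$, $N+5$ as possible members of $A$; their corresponding $x$-values are $2,14,16,20$, each of which was shown not to lie in $A$. Hence no cover of $z$ exists, a contradiction. The main obstacle is the bookkeeping needed to ensure that the chain of forced inclusions $\{0,1,7,8,11,12,18,19,N+2,N+3,N+5\}\subseteq A$ really is inescapable and that no element of $[N-5,N+5]$ beyond those listed can slip into $A$ to rescue the cover of $2N-10$.
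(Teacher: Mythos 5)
Your proposal is correct and follows essentially the same route as the paper: assuming $N+2\in A$, both arguments force $1,N+3\in A$, then $7,8,11\in A$ (so $A$ begins like $S(0,1,7,8,11)$), and both reach the contradiction that $2N-10$ cannot be covered, with the same candidate analysis in the window $[N-5,N+5]$ (the paper reduces to $y=N-4$ with $x=2\notin A$; you additionally check $y=N+2,N+3,N+5$ explicitly). The only cosmetic difference is that you re-derive $12,18,19\in A$ and $13,\dots,17,20\notin A$ by direct mod-covering arguments instead of invoking Lemma \ref{lemma:restriction} to read these off the greedy sequence, and your exclusions of $13,\dots,17$ in fact only need $\{0,1,7,8,11,12\}\subseteq A$, so they are available before concluding $18,19\in A$ as required.
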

\begin{proof}
Assume $N+2\in A$. Observe that $4,10$ are mod-covered and $2,5\not\in A$. Every possible mod-cover of $5$ includes $1$, so $1\in A$ and therefore $2,3,9$ are also mod-covered. We then see that $N+3\in A$ is required to mod-cover $5$. Therefore, $N+1\not\in A$ and $6$ is mod-covered. We cannot mod-cover $7,8,11$, so they are elements of $A$. Therefore, $S(A)=S(0,1,7,8,11)$.

We see that $2N-1,2N-2,\dots,2N-9$ are covered. However, we must include an additional element into $A$ in order to cover $2N-10$. The possible candidates are $N-5,N-4,N-3,N-2,N-1$. However, $N-5,N-3,N-2,N-1$ are not allowed for they would introduce a mod-AP into $A$. Therefore, $2N-10$ is covered by $2,N-4$. This is a contradiction with $2\not\in A$.
\end{proof}

\begin{lemma}\label{lemma:char11d}
Let $A$ be a modular set modulo $2N$ with $\lambda(A)=11$ with $N+2,N+4\not\in A$. Then $N+3\not\in A$.
\end{lemma}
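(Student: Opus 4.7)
The plan is to assume $N+3 \in A$ for contradiction and to work through the forced structure of $A$ in the style of Lemmas~\ref{lemma:char11a}--\ref{lemma:char11c}. Since $\max(A)=N+5$, Remark~\ref{remark:mod2N} forces $3, 5, N \not\in A$; combining with the hypothesis $N+2, N+4 \not\in A$ and the observation that $N+1, N+3, N+5$ would be a 3-AP in $A$ modulo $2N$, we also get $N+1 \not\in A$. The pairs $(0,N+3)$, $(0,N+5)$, $(N+3,N+5)$ immediately mod-cover $6$, $10$, and $N+7$ respectively. The remaining small elements $1, 2, 4, 7, 8, 9$ must each lie in $A$ or be mod-covered, and the exclusions above leave only a very short list of feasible mod-covering pairs for each.

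The natural case split is on whether $1 \in A$. If $1 \in A$, then $2 \not\in A$ (since $\{0,1,2\}$ is an AP), and the only candidate mod-covers of $4$ all require an already-excluded element, forcing $4 \in A$; the pairs among $\{0,1,4,N+3,N+5\}$ then mod-cover all of $\{2,5,6,7,8,9,10\}$, so $A \cap \{0,\dots,10\}=\{0,1,4\}$. A direct check shows no pair in $\{0,1,4\}$ covers $11$ in $\mathbb{N}_0$, which forces $11 \in A$; since $11 > \omega(A)$, Lemma~\ref{lemma:restriction} then yields $S(A)=S(0,1,4,11)$. If $1 \not\in A$, the only available mod-cover of $1$ is via $(9, N+5)$, so $9 \in A$, and one subdivides further according to whether $2 \in A$ or $4 \in A$, each branch producing an explicit short list of small elements of $A$. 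In every branch, the contradiction is then reached by examining the required covers of $2N-1, 2N-2, \ldots$ and finding some $2N-c$ for small $c$ whose only viable covering pair forces an element into $A$ that either violates AP-freeness modulo $2N$ (via a progression involving a subset of $\{0, N+3, N+5\}$) or contradicts an earlier-established non-membership, as in the closing arguments of Lemmas~\ref{lemma:char9a} and~\ref{lemma:char11c}.

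The main obstacle is the final covering computation in each branch: one must track enough of $A$ and of the forbidden diagonals $x \leftrightarrow x+N$ to be sure that every candidate $y \in A$ used to cover a small $2N-c$ is genuinely blocked. The three fixed elements $0, N+3, N+5$ create a dense web of forbidden progressions that should knock out most candidate pairs $(x, N-j)$, but verifying this cleanly in each sub-case, while accounting for the residual freedom in the middle range $[12, N+2]$, is where the bookkeeping is most error-prone.
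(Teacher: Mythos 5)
There is a genuine error at the heart of your Case~I, and it causes you to miss the short contradiction the configuration actually forces. With $N+3\in A$ and $\max(A)=N+5\in A$, the element $4$ can never lie in $A$: since $2\cdot 4-(N+3)\equiv N+5\pmod{2N}$, the triple $N+3,4,N+5$ is a mod-AP, so $4\in A$ would violate $3$-freeness modulo $2N$ (this is the same wrap-around mechanism used in Lemma~\ref{lemma:char9b}, where $N+2,3,N+4$ excludes $3$). Your setup misses this exclusion — you list only $3,5,N,N+1$ as forbidden — and so in Case~I you conclude ``forcing $4\in A$'' precisely where the argument should terminate: after $1\in A$ is forced (to mod-cover $5$) one has $2\not\in A$, and then $4$ is neither in $A$ nor mod-coverable (the candidates $(0,2)$, $(2,3)$, $(2,N+3)$, $(6,N+5)$ all require excluded elements, $6$ being excluded because $0,N+3,6$ is a mod-AP). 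That is the entire proof in the paper; no analysis of $S(0,1,4,11)$ or of covers of $2N-1,2N-2,\dots$ is needed, and the sequence you build is vacuous because its seed set is not $3$-free mod $2N$.

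Your Case~II has related problems: if $1\not\in A$, then $5$ (which is excluded by Remark~\ref{remark:mod2N}) cannot be mod-covered at all — the only candidate pairs are $(1,3)$, $(3,4)$, $(1,N+3)$, $(5,N+5)$, and $3,4,5\not\in A$ while $N+4\not\in A$ by hypothesis — so the case collapses immediately, and the further subdivision on ``$4\in A$'' is again impossible. Finally, even granting the branching, your endgame (finding some $2N-c$ that cannot be covered) is only described, not executed; as written the proposal does not constitute a proof, and the branch structure it rests on is inconsistent with the forced exclusion of $4$.
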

\begin{proof}
Assume $N+3\in A$. Observe that $6,10$ are mod-covered and $3,4,5,N+1\not\in A$. We require $1\in A$ to mod-cover $5$, so $1\in A$ and therefore $2,9$ are also mod-covered. This is a contradiction since there is no way to mod-cover $4$.
\end{proof}

\begin{lemma}\label{lemma:char11e}
Let $A$ be a modular set modulo $2N$ with $\lambda(A)=11$ with $N+2,N+3,N+4\not\in A$. Then $N+1\not\in A$.
\end{lemma}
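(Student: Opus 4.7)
The plan is to argue by contradiction: assume $N+1 \in A$ and show that $11$ cannot be mod-covered by $A$. Since $\max(A) = N+5$, we have $N+5 \in A$, and Remark \ref{remark:mod2N} yields $1, 5, N \notin A$. The first task is to extract forced small elements from mod-covering requirements. For $1$, the equation $2y - x \equiv 1 \pmod{2N}$ reduces to $2y - x = 2N + 1$, whence $y \in \{N+1, N+5\}$; the case $y = N+1$ gives $x = 1 \notin A$, so the only possibility is $y = N+5, x = 9$, forcing $9 \in A$. For $5$, the analogous analysis excludes $y = N+5$ (which would need $x = 5 \notin A$) and collapses to $y = 4, x = 3$, forcing $3, 4 \in A$.

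Next I would look at $2N - 3$, which, because $\omega(A) < 11$ and $N$ can be taken large via Remark \ref{remark:bigN}, must be genuinely covered by $x, y \in A$ with $2y - x = 2N - 3$. The witnessing $y$ lies in $A \cap [N-1, N+5]$, and since $N, N+2, N+3, N+4 \notin A$, the only candidates are $y \in \{N-1, N+1, N+5\}$, giving $x \in \{1, 5, 13\}$. Only $13$ can be in $A$, so $13 \in A$. Combined with $9 \in A$, the arithmetic progression $9, 11, 13$ forces $11 \notin A$.

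The crux --- and what I expect to be the main obstacle, since it is not suggested by the routine casework --- is to spot the twin mod-AP identities $2(N+5) - 7 \equiv 3$ and $2(N+5) - 10 \equiv 0 \pmod{2N}$. Since $0, 3, N+5$ all lie in $A$, admitting $7$ or $10$ into $A$ would create a mod-AP in $A$, violating 3-freeness mod $2N$; hence $7, 10 \notin A$. Also $6 \notin A$ (from $0, 3, 6$) and $8 \notin A$ (from $0, 4, 8$). Now mod-covering $11$ requires $x, y \in A$ with $2y - x \equiv 11 \pmod{2N}$. Since $\max(A) = N+5$, the branch $2y - x = 2N + 11$ demands $y \geq N+6$, which is impossible, so $2y - x = 11$ with $y \in \{6, 7, 8, 9, 10\}$. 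Having eliminated $6, 7, 8, 10$ from $A$, only $y = 9$ remains, and $y = 9$ forces $x = 7 \notin A$. No mod-cover of $11$ exists, contradicting that $A$ is modular.
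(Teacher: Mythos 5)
Your proof is correct, but it takes a genuinely longer route than the paper's, which disposes of this case in two lines: with $N+1,N+5\in A$, the element $3$ cannot lie in $A$ because $N+1,3,N+5$ is a mod-AP ($2\cdot 3-(N+1)\equiv N+5\pmod{2N}$), and then $5\notin A$ has no possible mod-cover at all --- the wrap-around branch $2y-x=2N+5$ needs $y\in\{N+3,N+4,N+5\}$, all excluded or forcing $x=5$, while the exact branch $2y-x=5$ needs the pair $(1,3)$ or $(3,4)$, both requiring an excluded element. You overlook the exclusion of $3$, so instead of stopping there you force $3,4\in A$ (a configuration that is already self-contradictory, since $3$ together with $N+1$ and $N+5$ violates $3$-freeness mod $2N$) and continue the forcing: $9\in A$ from mod-covering $1$, $13\in A$ from covering $2N-3$, hence $11\notin A$, and finally $11$ admits no mod-cover since $6,7,8,10\notin A$ and $y=9$ would need $x=7$. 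Each of these forcing steps is sound (your enumerations of the covers of $1$, $5$, $2N-3$, and $11$ are complete, and your appeal to $\omega(A)<\lambda(A)=11$ together with large $N$ via Remark \ref{remark:bigN} matches the paper's standard usage), so the contradiction you reach is legitimate and the proof stands. What the paper's route buys is brevity --- the contradiction is already available at $5$; what yours buys is only a demonstration of the heavier machinery (covering the large elements $2N-k$) that the paper reserves for the cases where the short argument is unavailable.
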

\begin{proof}
Assume $N+1\in A$. Observe that $2,10$ are mod-covered and $1,3,5\not\in A$. There is no way to mod-cover $5\not\in A$. Contradiction.
\end{proof}

\begin{lemma}\label{lemma:char11f}
There does not exist a modular set $A$ of modulus $2N$ and $\lambda(A)=11$ with $N+1,N+2,N+3,N+4\not\in A$. 
\end{lemma}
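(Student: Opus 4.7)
The plan is to use mod-APs containing $N + 5$ to exclude several elements of $A$ near $0$ and near $N$, and then to show that no pair in $A$ can cover $2N - 10$. Since $\lambda(A) = 11$, we have $\max(A) = N + 5$, and by Remark \ref{remark:mod2N}, $N, 5 \not\in A$. The crucial observation is that $N + 5 \in A$ participates in two useful mod-APs with each small $y \in A$: the mod-AP $(y, N + 5, 2(N + 5) - y)$ where $N + 5$ is the middle, and the mod-AP $(N + 5, y, 2y - (N + 5))$ where $y$ is the middle, with both third terms understood modulo $2N$.

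Following the template of Lemmas \ref{lemma:char5a}--\ref{lemma:char5b}, mod-covering $5$ forces $3 \in A$. I split on whether $1 \in A$. If $1 \not\in A$, then mod-covering $1$ forces $9 \in A$, mod-covering $5$ forces $4 \in A$, the APs $(0, 2, 4)$ and $(0, 4, 8)$ give $2, 8 \not\in A$, and $2$ then has no valid mod-cover, a contradiction. So $1 \in A$. Standard greedy arguments then give $\{0, 1, 3, 4\} \subseteq A$ and $\{2, 5, 6, 7, 8\} \cap A = \emptyset$, and the two mod-AP families applied at $y \in \{0, 1\}$ yield the further exclusions $\{9, 10, N - 3, N - 5\} \cap A = \emptyset$ (from $(1, N+5, 9)$, $(0, N+5, 10)$, $(N+5, 1, N - 3)$, $(N+5, 0, N - 5)$). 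Neither $11$ nor $12$ has a valid cover/mod-cover (for $12$ the wrap is blocked by $\max(A) = N + 5$), so $11, 12 \in A$; the AP $(11, 12, 13)$ excludes $13$; and covering $2N - 3$ then forces $N - 1 \in A$.

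The finishing step is to enumerate the six pairs $(x, y) \in A \times A$ with $x < y$, $y \in \{N - 5, \ldots, N + 5\}$, and $2y - x = 2N - 10$:
\[
(0, N - 5),\ (2, N - 4),\ (4, N - 3),\ (6, N - 2),\ (8, N - 1),\ (20, N + 5).
\]
Each fails: $N - 5, N - 3 \not\in A$ from the mod-APs above; $2, 6, 8 \not\in A$ by the elementary APs $(0, 1, 2)$, $(0, 3, 6)$, $(0, 4, 8)$; and $20 \not\in A$ because $(4, 12, 20)$ is an AP with $4, 12 \in A$. Hence $2N - 10$ is not covered by $A$, the desired contradiction. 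The main obstacle I anticipate is spotting the mod-APs $(N + 5, 0, N - 5)$ and $(N + 5, 1, N - 3)$: without the resulting exclusions of $N - 3$ and $N - 5$, the pairs $(4, N - 3)$ and $(0, N - 5)$ would remain candidates for covering $2N - 10$, and a much longer case analysis on middle elements like $N - 2$ and $N - 4$ would be required.
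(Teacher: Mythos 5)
Your proof is correct, and its first half is essentially the paper's argument: mod-covering $5$ forces $3\in A$, a short case split yields $\{0,1,3,4\}\subseteq A$ (your split on $1\in A$ versus $1\notin A$ is equivalent to the paper's split on whether $5$ is covered by $1,3$ or by $3,4$, with your $1\notin A$ branch dying immediately where the paper's merges), and then $11,12\in A$ with $2,5,6,7,8,9,10,13\notin A$, exactly as in the paper's reconstruction $S(A)=S(0,1,3,4,11)$. Where you genuinely diverge is the endgame. The paper walks down through $2N-1,2N-2,\dots$, successively forcing $N-1$, $N-2$ (after excluding $N-3$ via the mod-AP $N-3,1,N+5$), and $N-4$ into $A$, and only reaches its contradiction at $2N-10$, where the needed element $N-5$ is barred by the mod-AP $N-5,0,N+5$. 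You instead jump straight to $2N-10$ and enumerate the covering pairs $2y-x=2N-10$ with $N-5\le y\le N+5$, killing each one using facts already in hand ($N-5,N-3$ barred by the same two mod-APs, $2,6,8,20\notin A$, and $y\in\{N,N+1,\dots,N+4\}$ excluded by Remark \ref{remark:mod2N} and the hypothesis of the lemma). This shows the paper's intermediate forcing of $N-1,N-2,N-4$ is logically unnecessary, and indeed your own deduction that covering $2N-3$ forces $N-1\in A$ is likewise superfluous, since the pair $(8,N-1)$ is already dead because $8\notin A$. Two small presentational points: your ``six pairs'' are candidate pairs, not pairs in $A\times A$, and the list silently omits the five values $y=N,N+1,\dots,N+4$, so you should say explicitly that those are excluded by Remark \ref{remark:mod2N} and by hypothesis; and when you assert that $2N-10$ must be \emph{covered} (not merely mod-covered), note that this is automatic here since $0<2y-x\le 2N+10<4N-10$ forces $2y-x=2N-10$ exactly for large $N$, which is the justification the paper leaves to its standing remark about $\omega(A)$.
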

\begin{proof}
Let $A$ be a modular set modulo $2N$ with $\lambda(A)=11$ and $N+1,N+2,N+3,N+4\not\in A$. Observe that $10$ is mod-covered and $5\not\in A$. We see that $5$ must be covered by (Case I) $1,3$ or (Case II) $3,4$. In both cases, $3\in A$, so $6$ and $7$ are mod-covered.

\medskip

Case I: In this case $1\in A$ which implies, $2,4,9$ are mod-covered. We see that $4\in A$ since it cannot be mod-covered, so $8$ is covered. Since $11$ also cannot be mod-covered, we have $11\in A$ and $S(A)=S(0,1,3,4,11)$.

\medskip

Case II: In this case $4\in A$ which implies $5,6,8$ are mod-covered and $2\not\in A$. We see that $1$ is required to cover $2$ and in turn $2,7,9$ are mod-covered. Since $11$ cannot be mod-covered, we have $11\in A$ and $S(A)=S(0,1,3,4,11)$. 

\medskip

In both these cases, we have $S(A)=S(0,1,3,4,11)$. Now, we examine how $A$ covers $2N-1,2N-2,\dots$. The elements $2N-1,2N-2$ are covered by $11,N+5$ and $12,N+5$. However, $2N-3$ requires $N-1\in A$. Using similar reasoning, one observes that $2N-4,2N-5,2N-6$ are covered. However, covering $2N-7$ requires $N-2$ or $N-3$. We cannot include $N-3$ in $A$ otherwise it would contain the mod-AP $N-3,1,N+5$. Therefore, $N-2\in A$ and $2N-7$ is covered by $3,N-2$. We see that $2N-8$ is covered but $2N-9$ requires $N-4\in A$. Even after including $N-4\in A$, we need $N-5$ to cover $2N-10$. This is a contradiction since the set $A$ would then include the mod-AP $N-5,0,N+5$.

Therefore, there does not exist a modular set of modulus $2N$ with $\lambda(A)=11$ and $N+1,N+2,N+3,N+4\not\in A$.
\end{proof}

\subsection{Character $\lambda=15$}
\begin{proposition}\label{prop:char15}
There does not exist a modular set $A$ modulo $2N$ with $\lambda(A)=15$.
\end{proposition}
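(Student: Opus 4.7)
The plan is to follow the template of Proposition \ref{prop:char11}. Since $\lambda(A)=15$, Definition \ref{def:char} forces $\max(A)=N+7\in A$, and then Remark \ref{remark:mod2N} gives $7,N\not\in A$, while $14$ is automatically mod-covered by $0,N+7$. I would organize the proof as a chain of lemmas, one per value $k\in\{1,2,3,4,5,6\}$, each ruling out the possibility that $N+k\in A$ under the hypothesis that $N+k',\dots,N+6\not\in A$ for the previously-handled $k'>k$, together with a final lemma handling the case when $N+1,\dots,N+6$ are all absent from $A$. As in Lemmas \ref{lemma:char11a}--\ref{lemma:char11b}, two of these may need to be split into parallel sub-branches to reach the clean statement $N+k\not\in A$.

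In the branch $N+k\in A$, the immediate consequences are: $k\not\in A$ by Remark \ref{remark:mod2N}; the element $2k$ is mod-covered by the pair $0,N+k$; and $N+14-k$ is mod-covered by the pair $N+k,N+7$. I would then work through the small residues $j$ that are not yet forced into $A$ or mod-covered, enumerating the admissible pairs $(x,y)\in A\times A$ with $2y-x\equiv j\pmod{2N}$. For small $j$, the workable $y$ must be either a small element of $A$ or one of $N+k,N+7$, so the menu of options is short; in practice this forces specific small elements into $A$ and either produces an immediately uncoverable residue or pins down the initial segment of $A$. In the latter case, Lemma \ref{lemma:restriction} identifies $S(A)$ as $S(a_0,\dots,a_m)$ for an explicit small set, and I would finish by examining coverage of $2N-1,2N-2,\dots$ from the top down: each such residue is covered either by a pair using one of the forced large elements of $A$ (pinning down the small partner $x$) or by a pair of the form $(x,N-j')$ with $N-j'$ newly forced into $A$. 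Each newly forced $N-j'$ must then be checked against Remark \ref{remark:mod2N} and against the mod-APs it would form with $0$, with $N+7$, or with any previously-forced $N+k$; one of these checks is expected to fail, producing the contradiction.

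The main obstacle I anticipate is the final lemma, which parallels Lemma \ref{lemma:char11f}. Here the element $5$ must be mod-covered using only small elements of $A$, and the admissible small pairs $(x,y)$ with $2y-x=5$ reduce to $(1,3)$ and $(3,4)$; both force $3\in A$ and split the analysis into two subcases, after which the small part of $A$ should become fully determined. The top-end analysis in each subcase then requires tracking a cascade of forced inclusions $N-j'\in A$, each of which must be checked for creating a mod-AP of the form $(N-j',0,N+j')$ with some already-present $N+j'$, or a similar configuration involving $N+7$; the bookkeeping is heavier than for $\lambda=11$ because $\max(A)=N+7$ allows more room at the top (and thus deeper cascades before a mod-AP finally appears), but no new technique is required beyond those already exercised in the proofs of Propositions \ref{prop:char9} and \ref{prop:char11}.
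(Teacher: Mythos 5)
Your outline reproduces the paper's general strategy (case analysis on which of $N+1,\dots,N+6$ lie in $A$, forcing the small elements, invoking Lemma \ref{lemma:restriction}, then checking coverage of $2N-1,2N-2,\dots$), but as written it is a plan rather than a proof: the entire content of Proposition \ref{prop:char15} \emph{is} the exhaustive verification, and you repeatedly defer it (``one of these checks is expected to fail''). Moreover, the anticipated shape of the argument is off in two substantive ways. First, you assume every branch dies either in an uncoverable residue or in a forbidden mod-AP among the forced elements. In fact several branches terminate only because the forced initial segment makes $S(A)$ a modular/independent Stanley sequence of a different, \emph{even} character --- e.g.\ $S(0,2,3,9,11)$ (character $20$), $S(0,1,9,10,15)$, $S(0,3,4,9,12,13,16)$, $S(0,1,6,9,10,15)$ (character $24$) --- and the contradiction is the character mismatch, not a coverage failure one can exhibit after finitely many top-down steps for arbitrary large $N$. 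A plan that only allows your two termination mechanisms has no way to close those subcases. Second, your enumeration in the final case is incomplete: the ways to mod-cover $5$ are not only $(1,3)$ and $(3,4)$, since $2(N+7)-9\equiv 5\pmod{2N}$, so the wrap-around pair $(9,N+7)$ is also admissible and does not force $3\in A$. (This is precisely why the paper's final lemma pivots on covering $7$, whose wrap-around partner would be $x=7\notin A$, rather than on $5$ as in the $\lambda=11$ case.)

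You also underestimate the branching. The paper does not eliminate $N+k$ in a simple chain $k=1,\dots,6$ with ``two'' extra splits: it needs eighteen lemmas (Lemmas \ref{lemma:char15a}--\ref{lemma:char15r}), eliminates the residues in the order $N+1$, $N+5$, $N+3$, $N+2$, $N+6$, $N+4$, and within a single elimination (e.g.\ $N+1\in A$) must condition separately on $N+3$, $N+5$, $N+6$, $N+2$, with several lemmas splitting further into sub-subcases. None of this is conceptually beyond the techniques of Propositions \ref{prop:char9} and \ref{prop:char11}, as you say, but until the cases are actually worked --- including the character-mismatch terminations and the wrap-around covers you omitted --- the proposition is not proved.
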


We break the proof of Proposition \ref{prop:char15} into Lemmas \ref{lemma:char15a} through \ref{lemma:char15r}. When $\lambda(A)=15$, observe that $\max(A)=N+7$.

\begin{lemma}\label{lemma:char15a}
Let $A$ be a modular set modulo $2N$ with $\lambda(A)=15$ with $N+1\in A$. Then $N+3\not\in A$.
\end{lemma}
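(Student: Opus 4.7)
The plan is to assume, for contradiction, that $N+3 \in A$, and then to follow the three-stage template of Lemmas~\ref{lemma:char9a}, \ref{lemma:char11c}, and \ref{lemma:char11f}: first use Remark~\ref{remark:mod2N} and short 3-APs to rule out many small and large residues, then enumerate the few possible mod-covers of the surviving forbidden residues to obtain a short case split, and finally, in each case, examine the covers of $2N-1, 2N-2, \ldots$ to extract a contradiction.

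In the first stage, since $\max(A) = N+7$ we have $N+7 \in A$, so Remark~\ref{remark:mod2N} gives $1, 3, 7 \notin A$. The 3-APs $\{N+1, N+2, N+3\}$, $\{N+1, N+3, N+5\}$, and $\{N+1, N+4, N+7\}$, each of which already contains two members of $A$, rule out $N+2, N+4, N+5$. The mod-APs $\{0, N+1, 2\}$, $\{0, N+3, 6\}$, and $\{0, N+7, 14\}$ further eliminate $2, 6$, and $14$ from $A$.

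In the second stage, I would enumerate the pairs $(x, y) \in A$ with $2y - x \equiv k \pmod{2N}$ that could mod-cover each of $1, 3, 5$ (and $4$ if $4 \notin A$). Using the prohibitions above and $y \leq N+7$, only a handful of options survive: mod-covering $1$ requires $5 \in A$, or $\{11, N+6\} \subset A$, or $13 \in A$; mod-covering $3$ requires $\{9, N+6\} \subset A$ or $11 \in A$; and mod-covering $5$ (when $5 \notin A$) forces $9 \in A$ via the pair $(9, N+7)$. This yields a small case split keyed on membership of $\{5, 9, 11, 13, N+6\}$ in $A$.

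In the third stage, within each branch I would continue the same mod-cover/AP reasoning to determine $A \cap \{0, 1, \dots, m\}$ up to the point where Lemma~\ref{lemma:restriction} pins down $S(A)$ completely. Then, as in the preceding lemmas, I would examine how $2N-1, 2N-2, \ldots$ must be covered by $A$, each of which must be properly covered since it exceeds $\omega(A) < 15$. With $y \leq N+7$ in any cover $(x, y)$, each successive $2N-k$ admits only a few candidate covers, and I expect covering some $2N-k$ either to force a new element near $N$ into $A$ that forms a forbidden mod-AP with one of $\{0, N+1, N+3, N+7\}$, or to identify $S(A)$ with a known modular Stanley sequence of character $\neq 15$. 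The principal obstacle will be containing the case explosion, but the tightness of the internal differences $2, 4, 6$ among $N+1, N+3, N+7$ severely restricts viable candidates, so each branch should collapse within only a few further AP checks, keeping the argument comparable in length to the preceding lemmas.
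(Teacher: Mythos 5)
Your stage one matches the paper exactly, and your enumerations of the possible mod-covers of $1$, $3$, and $5$ are correct as far as they go. But there is a genuine gap, and it comes in two places. First, your stage-two enumeration omits the element $7$, even though you correctly noted $7\notin A$ in stage one. Covering $7$ is the decisive constraint: $2y-x\equiv 7\pmod{2N}$ with $1,3\notin A$ and $N+4,N+5\notin A$ leaves only the pairs $(5,6)$ and $(5,N+6)$, and $6\notin A$ is forced by the mod-AP $6,N+3,0$, so $5,N+6\in A$ immediately. This is what collapses your proposed case split on membership of $\{5,9,11,13,N+6\}$: once $5,N+6\in A$, the element $9$ is mod-covered by $5,N+7$ (hence $9\notin A$), so covering $3$ forces $11\in A$, then $8\notin A$ (AP $5,8,11$), then $13\in A$ (it cannot be mod-covered), then $4\in A$ (to cover $8$), and Lemma~\ref{lemma:restriction} pins down $S(A)=S(0,4,5,11,13,16)$ uniquely. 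In particular your branch ``$5\notin A$, so $9\in A$'' is vacuous, but as written you never refute it, and without the constraint from $7$ your tree of cases does not visibly terminate.

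Second, and more seriously, your stage three is only an expectation, not an argument: ``I expect covering some $2N-k$ either to force \dots or to identify \dots'' is precisely the part of the proof that has to be carried out. The actual contradiction requires the concrete computation that $2N-1,\dots,2N-11$ are covered by the determined set, while any cover $(x,y)$ of $2N-12$ must have $y=N-2$ and hence $x=8$, contradicting $8\notin A$ --- and note that the fact $8\notin A$, which you never derive, is exactly what the contradiction hinges on. Until the set is pinned down and this terminal computation (or an equivalent one) is performed in each surviving branch, the proposal is an outline of the paper's method rather than a proof of the lemma.
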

\begin{proof}
Suppose $N+3\in A$. Then $2,6, 14$ are mod-covered and $1,3,7,N-7,N-4,N-3,N-1,N+2,N+4,N+5\not\in A$. Necessarily $7$ is mod-covered by $5,N+6\in A$; therefore, $1,9,10,12$ are mod-covered and $N-4,N-6\not\in A$. We need $11\in A$ to mod-cover $3$ which implies $8\not\in A$. Furthermore, $13\in A$ since it cannot be mod-covered. We see that $4\in A$ in order to cover $8$. We then see that $S(A)=S(0,4,5,11,13,16).$ One computes that $2N-1,\dots,2N-11$ are covered. However, $2N-12$ and therefore must be covered by $x<y$ with $x,y\in A$. However, $y=N-2$ necessarily which implies $x=8$, a contradiction with $8\not\in A$.
\end{proof}

\begin{lemma}\label{lemma:char15b}
Let $A$ be a modular set modulo $2N$ with $\lambda(A)=15$ with $N+1\in A$ and $N+3\not\in A$. Then $N+5\not\in A$.
\end{lemma}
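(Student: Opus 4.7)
The plan is to assume $N+5\in A$ for contradiction and to replicate the forcing-then-covering strategy used in Lemma \ref{lemma:char15a}. Since $\lambda(A)=15$, one has $\max(A)=N+7$.

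First I would record the immediate consequences of having $0,N+1,N+5,N+7\in A$. Remark \ref{remark:mod2N} yields $N,1,5,7\notin A$, and three-term AP constraints eliminate $N+4$ (via $N+1,N+4,N+7$) and $N+6$ (via $N+5,N+6,N+7$), leaving only $N+2$ undetermined among the top residues. Several small residues are automatically mod-covered: $2$ by $0,N+1$; $10$ by $0,N+5$; $14$ by $0,N+7$; and $N+9,N+13$ by the upper pairs.

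The crucial step is to find which small residues cannot be mod-covered and are thus forced into $A$. A short enumeration shows that $5\notin A$ can only be mod-covered by the pair $(3,4)$ or by the pair $(9,N+7)$; similarly $3$ forces $11\in A$, and $4$ forces $N+2\in A$ or $6\in A$. I would split into cases accordingly, the most natural split being on whether $9\in A$ or instead $\{3,4\}\subseteq A$. In each branch, I would propagate the forced elements using Remark \ref{remark:mod2N}, AP checks against $N+1,N+5,N+7$, and the requirement that each residue be in $A$ or mod-covered. This should determine enough of $A$ that Lemma \ref{lemma:restriction} expresses $S(A)$ via an explicit short generator in every branch.

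The final step mirrors the upper cascade in Lemma \ref{lemma:char15a}: examining how $2N-1,2N-2,\dots$ are covered in descending order. Since the only elements of $A$ above $N$ are $N+1,N+5,N+7$ (and possibly $N+2$), the options for covering successive $2N-j$ with $y\ge N$ are quickly exhausted, forcing new elements $N-m\in A$ for small $m$. The main obstacle will be verifying that in each branch some such forced $N-m$ creates a forbidden mod-AP with one of $\{0,N+1,N+5,N+7\}$---for instance the AP $N-3,N+1,N+5$ or the mod-AP $N-5,0,N+5$---or else coincides with a residue already ruled out (such as $N$ or $N+4$). Establishing this cleanly in every branch of the case analysis will yield the contradiction and complete the proof.
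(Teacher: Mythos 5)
There is a genuine gap: your plan never actually reaches a contradiction, and it overlooks the short decisive observation that makes this lemma (unlike Lemma \ref{lemma:char15a}) essentially one line. With $0,N+1,N+5,N+7\in A$, Remark \ref{remark:mod2N} gives $1,5,7\notin A$, and mod-AP checks against the top elements give $3\notin A$ (else $N+1,3,N+5$ is a mod-AP, since $2\cdot 3-(N+1)\equiv N+5$), $4\notin A$ (else $N+1,4,N+7$), and $6\notin A$ (else $N+5,6,N+7$). But then $7$ is neither in $A$ nor mod-coverable: $2y-x=7$ forces $(x,y)\in\{(1,4),(3,5),(5,6)\}$, all involving excluded elements, while $2y-x=2N+7$ forces $y\in\{N+5,N+7\}$ (as $N+3,N+4,N+6\notin A$) and hence $x=3$ or $x=7$, again excluded. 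That is the paper's entire proof; the whole forcing-plus-cascade machinery is unnecessary here.

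Your proposal, by contrast, treats $3,4\in A$ as live options (e.g.\ claiming $5$ might be mod-covered by the pair $(3,4)$), which shows the facts $3,4,6\notin A$ were not derived; and it locates the expected contradiction in an ``upper cascade'' over $2N-1,2N-2,\dots$ that is never carried out --- you explicitly defer it (``establishing this cleanly in every branch \dots will yield the contradiction''). Since the contradiction in fact occurs already at the residue $7$, before any branch can even be completed into an explicit $S(A)$ via Lemma \ref{lemma:restriction}, the proposed endgame is both unverified and misdirected. As written this is a strategy sketch, not a proof of the lemma.
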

\begin{proof}
Assume $N+5\in A$. Then $N+4,N+6\not\in A$ and $2,10,14$ are mod-covered and $1,3,4,5,6,7\not\in A$. This is a contradiction since it is impossible to mod-cover $7$.
\end{proof}

\begin{lemma}\label{lemma:char15c}
Let $A$ be a modular set modulo $2N$ with $\lambda(A)=15$ with $N+1\in A$ and $N+3,N+5\not\in A$. Then $N+6\not\in A$.
\end{lemma}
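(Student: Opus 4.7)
Following the template of Lemmas \ref{lemma:char15a} and \ref{lemma:char15b}, I would assume $N+6\in A$ for contradiction and trace through the forced structure of $A$ until either a mod-AP appears or some residue cannot be covered. The immediate deductions are $1, 6, 7\notin A$ (applying Remark \ref{remark:mod2N} to $N+1, N+6, N+7\in A$), together with the observation that including $N-1$ in $A$ would produce the mod-AP $\{N-1, 0, N+1\}$, so $N-1\notin A$ as well. The pairs among $\{0,N+1,N+6,N+7\}$ immediately mod-cover $2$, $12$, $14$, and (at the mid-range) $N+8$.

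The next step is to analyze how the small non-elements (principally $5$ and $7$) and the forbidden mid-range elements $N+3$ and $N+5$ get mod-covered. Since every element of $A$ lies in $[0,N+7]$, for each such target $z$ the congruence $2y-x\equiv z\pmod{2N}$ reduces to at most one or two Diophantine equations, and a short enumeration over $y\in\{N+7, N+6, N+5, N+4, N+3, N+1\}$ plus the small-$y$ regime forces specific elements into $A$. For example, the only remaining mod-covers of $N+5$ pair $y=N+1$ with $x=N-3$, so $N-3\in A$ unless a middle element is used, while mod-covering $7$ forces one of $\{3,5\}\subset A$ or $\{3,4\}\subset A$, etc. Likely this splits into two or three parallel subcases based on how $5$ and $7$ are mod-covered, mirroring Cases I and II of Lemma \ref{lemma:char9d} or Lemma \ref{lemma:char11f}. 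In each subcase, once the initial segment of $A$ is pinned down, Lemma \ref{lemma:restriction} identifies $S(A)$ explicitly.

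With $S(A)$ in hand, I would then work through the coverings of $2N-1, 2N-2, \ldots$ in decreasing order; each value that cannot be covered by the currently known elements forces a specific middle element $N-k$ into $A$. The main obstacle will be the bookkeeping at this stage: for each forced $N-k$, I must verify against the accumulated list $\{0, N-3, N+1, N+6, N+7, \ldots\}$ and any previously forced middle elements that no mod-AP of the form $\{N-k, 0, N+k\}$, $\{N-k, N+1, N+k+2\}$, or $\{N-k, N+6, N+k+12\}$ has been introduced. In each subcase the constraints should become incompatible after a bounded number of such forced inclusions---either the next required middle element produces a mod-AP, or some $2N-k$ simply cannot be covered---yielding the desired contradiction and proving $N+6\notin A$.
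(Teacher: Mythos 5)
Your outline matches the paper's general method (assume $N+6\in A$, force the structure of $A$, derive a contradiction), but it stops at the level of a plan, and for a lemma of this kind the plan \emph{is} not the proof: the entire content is the specific finite deduction chain, which you defer with phrases like ``likely this splits into two or three subcases'' and ``should become incompatible after a bounded number of forced inclusions.'' Moreover, the places where you do commit to specifics go wrong. (i) You list $5$ among the ``small non-elements'' to be mod-covered, but the hypotheses exclude $N+5$, not $5$; in fact $5$ is forced \emph{into} $A$, since the only ways to mod-cover $7$ are the pairs $(3,5)$ and $(5,N+6)$, both requiring $5\in A$. (ii) Your claimed dichotomy ``mod-covering $7$ forces $\{3,5\}\subset A$ or $\{3,4\}\subset A$'' fails: $2\cdot 4-3=5\neq 7$, and $4\notin A$ in any case because $4,N+1,N+7$ form a mod-AP (as do $N+1,N+4,N+7$, giving $N+4\notin A$) --- exclusions your outline misses. (iii) The analysis of how $N+3$ and $N+5$ are covered (your $N-3$ remark) is a detour the argument never needs, and your anticipated endgame --- accumulating forced middle elements $N-k$ until a mod-AP appears --- is not how this case closes.

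For comparison, the paper's proof is a single short chain with no case split: from $0,N+1,N+6,N+7\in A$ one gets $1,4,6,7,N-1,N+4\notin A$ and $2,12,14$ mod-covered; mod-covering $7$ forces $5\in A$ (so $7,9,10$ are mod-covered); $8$ and then $3$ cannot be mod-covered, so both lie in $A$; $N+2\in A$ is forced to mod-cover $1$; Lemma \ref{lemma:restriction} then gives $S(A)=S(0,3,5,8,15)=\{0,3,5,8,15,17,18,20,\dots\}$. The contradiction is immediate at $2N-2$: any cover $2y-x=2N-2$ with $x<y\le N+7$ needs $y\in\{N-1,N,N+1,N+2,N+6,N+7\}$, forcing $x\in\{0,2,4,6,14,16\}$, and every such pair fails ($N,N-1\notin A$; $2,4,6,14\notin A$; $16\notin S(0,3,5,8,15)$ because of the progression $0,8,16$). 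So there is no gradual accumulation of elements $N-k$ at all. To make your proposal a proof you would need to carry out exactly this kind of forcing, correcting the mis-stated cover of $7$ and the status of $4$ and $5$ along the way.
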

\begin{proof}
Assume $N+6\in A$. Then $2,12,14$ are mod-covered and $1,4,6,7,N-1,N+4\not\in A$. We need $5\in A$ in order to mod-cover $7$ which implies $7,9,10$ are mod-covered. We see that $8\in A$ since it cannot be mod-covered and therefore $4,6,11$ are mod-covered. Similarly, $3$ cannot be mod-covered, so $3\in A$ and thus $13$ is mod-covered. Lastly, $N+2\in A$ necessarily to mod-cover $1$.

Observe that $S(A)=S(0,3,5,8,15)=\{0,3,5,8,15,17,18,20,\dots\}$. However, there is no way to cover $2N-2\not\in A$. Contradiction.
\end{proof}

\begin{lemma}\label{lemma:char15d}
Let $A$ be a modular set modulo $2N$ with $\lambda(A)=15$ with $N+1\in A$ and $N+3,N+5,N+6\not\in A$. 
\end{lemma}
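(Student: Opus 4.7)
The plan is to mirror the strategy of Lemmas \ref{lemma:char15a}--\ref{lemma:char15c}: assume toward a contradiction that $N+2\in A$ (the only remaining undetermined element of $A\cap(N,2N)$ under the running hypotheses), pin down the rest of $A$ from the resulting constraints, invoke Lemma \ref{lemma:restriction} to pass to the full Stanley sequence, and then descend through $2N-1,2N-2,\ldots$ until a forced cover produces a forbidden modular AP.

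First I would collect the immediate consequences. Since $\lambda(A)=15$ gives $\max(A)=N+7\in A$, and 3-freeness applied to the mod-AP $N+1,N+4,N+7$ rules out $N+4$, combining with the standing assumptions $N+3,N+5,N+6\notin A$ and the new assumption $N+2\in A$ pins down $A\cap(N,2N)=\{N+1,N+2,N+7\}$. Remark \ref{remark:mod2N} then excludes $1,2,7$ from $A$, while the mod-APs $0,N+1,2$; $0,N+2,4$; $0,N+7,14$; $N+1,N+2,N+3$; and $N+1,N+7,N+13$ automatically mod-cover $2,4,14,N+3,N+13$.

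Next I would sweep the small residues in increasing order, using the fact that every residue less than $\omega(A)<15$ either lies in $A$ or is mod-covered by $A$. The element $1\notin A$ requires $2y-x\equiv 1\pmod{2N}$, which given the pinned upper half forces $(x,y)\in\{(3,N+2),(13,N+7)\}$; hence $3\in A$ or $13\in A$. Each branch propagates via Remark \ref{remark:mod2N} and through the additional mod-APs it introduces with $0,N+1,N+2,N+7$; I would continue the sweep at $3,5,6,8,9,\ldots$, at each stage either closing a branch through a residue with no available mod-cover or forcing a concrete new element into $A$. In at least one of the surviving branches, Lemma \ref{lemma:restriction} then upgrades the short initial segment so obtained to a full description of $S(A)$.

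The final step is the descending analysis of $2N-1,2N-2,\ldots$. The upper partner $y$ in any cover is drawn from $\{N+1,N+2,N+7\}$ together with any $N-j$'s forced along the way, and each candidate forces the lower partner $x$. The main obstacle is precisely this descent: whenever a new $N-j\in A$ is required to cover some $2N-k$, I must check that it does not complete a three-term modular AP with the already-placed elements. For example $N-j,0,N+j$ conflicts for $j\in\{1,2,7\}$, and further triples of the form $N-j,N+1,N+2+j$ or $N-j,N+2,N+4+j$ eliminate additional values of $j$. I expect every branch to die at such a step, where the unique admissible cover of some small $2N-k$ demands an $N-j$ whose inclusion creates a forbidden mod-AP with $0,N+1,N+2,$ or $N+7$, delivering the desired contradiction.
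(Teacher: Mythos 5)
Your setup is sound and matches the paper's framework: assume $N+2\in A$, note $\max(A)=N+7$, kill $N+4$ via the AP $N+1,N+4,N+7$ so that $A\cap(N,2N)=\{N+1,N+2,N+7\}$, record the exclusions from Remark \ref{remark:mod2N} and the automatic mod-covers, then force small elements, invoke Lemma \ref{lemma:restriction}, and finally descend through $2N-1,2N-2,\dots$. (You also correctly read the intended conclusion, $N+2\not\in A$, which is missing from the lemma statement.) But as written this is a plan, not a proof: the entire content of the lemma is the finite forcing argument, and you never carry it out. You stop at ``$3\in A$ or $13\in A$,'' say you ``would continue the sweep,'' and conclude with ``I expect every branch to die.'' That expectation is exactly what has to be verified. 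In the paper's argument the sweep is executed: mod-covering $7$ forces $5\in A$ (and, with $3\in A$, the cover $3,5$), after which $1,6,9,10,11$ are mod-covered, $8$ and then $12$ are forced into $A$ because they cannot be mod-covered, and Lemma \ref{lemma:restriction} yields $S(A)=S(0,3,5,8,12,15)$; none of these concrete deductions appear in your proposal, and without them there is nothing to feed into the descent.

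Your description of the terminal contradiction is also not what actually happens here, which underlines that the final step is speculative rather than established. In this lemma no element $N-j$ ever gets forced into $A$: one checks that $N-1\not\in A$ (it would complete a mod-AP with $0,N+1$), and then $2N-2$ simply has \emph{no} cover at all --- the admissible upper partners $N+1,N+2,N+7$ would require $4,6,16\in A$ respectively, all of which have already been excluded. So the contradiction is ``an element above $\omega(A)$ that cannot be covered,'' not ``a forced $N-j$ that creates a mod-AP.'' Moreover the branch structure you would have to sweep is genuinely nontrivial (e.g.\ a priori $7$ could be mod-covered by $3,5$ or by $5,6$, and $1$ by $3,N+2$ or $13,N+7$), and different branches die at different stages --- some at a small residue with no available mod-cover, some only deep in the descent. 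Until each such branch is explicitly closed, the proof has a gap.
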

\begin{proof}
Assume $N+2\in A$. Then $2,4,14$ are mod-covered and $1,7,N+4\not\in A$. Observe that $3,5\in A$ necessarily to cover $7$. Therefore, $1,6,7,9,10,11$ are mod-covered. We see that $8\in A$ since it cannot be mod-covered. Therefore, $13$ is mod-covered. Lastly, $12\in A$ since it cannot be covered.

Therefore, $S(A)=S(0,3,5,8,12,15)$. We know that $N-1\not\in A$ otherwise this would introduce mod-AP. However, this leaves us with no way to cover $2N-2$. Contradiction.
\end{proof}

\begin{lemma}\label{lemma:char15e}
There does not exist a modular set $A$ modulo $2N$ with $\lambda(A)=15$ and $N+1\in A$ and $N+2,N+3,N+5,N+6\not\in A$.
\end{lemma}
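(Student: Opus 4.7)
The plan is to derive a contradiction by analyzing how the element $7$ is mod-covered by $A$. Since $N+1, N+7 \in A$, Remark \ref{remark:mod2N} forces $1, 7 \notin A$, while $2$ and $14$ are mod-covered by $(0, N+1)$ and $(0, N+7)$ respectively. I will enumerate every pair $x < y$ in $A$ satisfying $2y - x \equiv 7 \pmod{2N}$. The only possible elements of $A$ exceeding $N$ lie in $\{N+1, N+4, N+7\}$; the equation $2y - x = 2N + 7$ restricted to these $y$ yields $x \in \{-5, 1, 7\}$, none of which lie in $A$. Any $y \in A$ with $6 < y \leq N$ would require $x = 2y - 7 > y$ or $x = 2y - 2N - 7 < 0$, and so contributes nothing. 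This leaves $y \in A \cap \{3, 5, 6\}$ and forces $(x, y) = (3, 5)$ or $(5, 6)$. In every scenario, mod-covering $7$ requires $5 \in A$.

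Case A ($N+4 \in A$): Then $4 \notin A$ by Remark \ref{remark:mod2N}. A parallel enumeration for mod-covers of $4$ shows $2y - x = 4$ has no solutions with $x, y \in A$, while $2y - x = 2N + 4$ with $y \in \{N+1, N+4, N+7\}$ yields $x \in \{-2, 4, 10\}$, forcing $10 \in A$. But then the AP $0, 5, 10$ rules out $5 \in A$, contradicting the requirement derived above.

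Case B ($N+4 \notin A$): I split on whether $4 \in A$. If $4 \in A$, the arithmetic progressions $3, 4, 5$ and $4, 5, 6$ rule out $\{3, 5\} \subseteq A$ and $\{5, 6\} \subseteq A$ respectively, so neither pair mod-covering $7$ is available, contradiction. If $4 \notin A$, the only remaining mod-cover of $4$ is $(x, y) = (10, N+7)$ (since $y = N+1$ gives $x = -2$ and $y = N+4$ is unavailable), forcing $10 \in A$; the AP $0, 5, 10$ again forbids $5 \in A$, contradicting the need for $5 \in A$.

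The principal obstacle is making the enumerations of mod-cover pairs for $7$ and $4$ rigorously exhaustive, especially the verification that intermediate $y \in A$ with $6 < y \leq N$ cannot contribute. Once this bookkeeping is settled, every sub-case collapses to the same AP conflict $0, 5, 10$.
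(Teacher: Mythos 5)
Your proposal is correct and takes essentially the same route as the paper's proof: both force $5\in A$ from the mod-cover of $7$ and then obtain the contradiction from the mod-covering of $4$, where the paper notes $10$ is already covered by $0,5$ (so $4$ cannot be mod-covered) while you equivalently note that covering $4$ forces $10\in A$, creating the progression $0,5,10$. The only difference is organizational---you case-split on membership of $N+4$ and $4$ instead of on whether $3$ or $6$ completes the cover of $7$---and your implicit use of $2\notin A$ (from $2$ being mod-covered by $0,N+1$) is the same standard inference the paper uses.
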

\begin{proof}
Suppose $A$ is such a modular set. We see that $2,14$ are mod-covered and $1,7\not\in A$. Furthermore, $5\in A$ is needed to cover $7$ and $13\in A$ is needed to mod-cover $1$. Therefore, $9,10$ are covered. In order to mod-cover $7$, we require either (Case I) $3\in A$ or (Case II) $6\in A$.

\medskip

Case I: Assume $3\in A$, then $6,7,11$ are mod-covered and $4\not\in A$. This is a contradiction since there is no way to mod-cover $4$.

\medskip

Case II: Assume $6\in A$, then $7,8,12$ are mod-covered and $3,4\not\in A$. This is a contradiction since there is no way to mod-cover $4$.

\medskip

Therefore, there does not exist a modular set $A$ modulo $2N$ with $\lambda(A)=15$ such that $N+1\in A$ and $N+2,N+3,N+5,N+6\not\in A$.
\end{proof}

Observe that Lemmas \ref{lemma:char15a}, \ref{lemma:char15b}, \ref{lemma:char15c}, \ref{lemma:char15d}, and \ref{lemma:char15e} imply that $N+1\not\in A$ for a modular set $A$ modulo $2N$ with character $\lambda(A)=15$.

\begin{lemma}\label{lemma:char15f}
Let $A$ be a modular set modulo $2N$ with $\lambda(A)=15$ with $N+5\in A$ and $N+1\not\in A$. Then $N+4\not\in A$.
\end{lemma}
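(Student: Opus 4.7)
The plan is to argue by contradiction. Suppose that $N+4\in A$; together with the hypotheses $N+5\in A$, $N+1\notin A$, and $\max(A)=N+7$ (from $\lambda(A)=15$ and Definition \ref{def:char}), we have $\{0,N+4,N+5,N+7\}\subset A$.

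First I would record the immediate consequences. By Remark \ref{remark:mod2N} the elements $N,4,5,7$ lie outside $A$; 3-freeness among adjacent top-half elements rules out $N+3$ and $N+6$ (else $N+3,N+4,N+5$ or $N+4,N+5,N+6$ would be an AP); and the mod-APs generated by $(0,N+4)$, $(0,N+5)$, $(0,N+7)$, $(N+5,N+7)$, and $(N+4,N+7)$ show that $8,10,14,N+9,N+10$ are all mod-covered, hence excluded from $A$.

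Next I would analyze how the remaining small non-elements are mod-covered. By Remark \ref{remark:bigN} we may assume $N$ is large, so the only admissible top-half witnesses are drawn from $\{N+4,N+5,N+7\}$, and each wraps around to give just one candidate small partner per element being covered (for instance, mod-covering $5$ requires either $(3,N+4)$, hence $3\in A$, or $(9,N+7)$, hence $9\in A$, or a purely lower-half pair such as $(1,3)$). I would branch on these cases in the style of Lemmas \ref{lemma:char15d} and \ref{lemma:char15e}, propagate each forced inclusion to the mod-covers of $1,2,3,6$, and in each branch expect one of three outcomes: (a) some small element is left with no legal mod-cover, (b) a forced inclusion creates a mod-AP with elements already in $A$ (typically of the shape $N-k,0,N+k$ or involving the tight cluster $N+4,N+5$), or (c) the sequence $S(A)$ becomes fully determined by Lemma \ref{lemma:restriction}, at which point scanning $2N-1,2N-2,\dots$ for covers (as in the closing argument of Lemmas \ref{lemma:char15c} and \ref{lemma:char11f}) either demands a forbidden new element of the form $N-k$ or exhausts all options.

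The main obstacle is the case analysis. Because $N+4$ and $N+5$ are adjacent and both lie in $A$, they jointly suppress many mod-covers that would otherwise be available, but they also generate many overlapping covers with $N+7$, so the bookkeeping on which small elements can and must lie in $A$ is delicate. Dispatching each sub-case — ensuring no admissible configuration of the lower-half elements survives — is where the bulk of the work will lie; the rest of the argument is the routine mod-cover and AP-avoidance bookkeeping already used throughout Section~3.
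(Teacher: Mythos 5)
Your outline correctly sets up the contradiction and names the right general strategy (bookkeeping of mod-covers, forced inclusions, and either an uncoverable residue, a forced mod-AP, or a fully determined $S(A)$ followed by scanning $2N-1,2N-2,\dots$), which is indeed how the paper argues. But the proposal never actually carries out the case analysis: you write that you ``would branch,'' ``expect one of three outcomes,'' and that ``dispatching each sub-case \dots is where the bulk of the work will lie.'' For this lemma that case analysis \emph{is} the mathematical content, so as it stands nothing has been proved. For comparison, the paper's argument is short and concrete: since $7\notin A$ must be mod-covered and $4,5,6,7\notin A$, either $1\in A$ or $3\in A$; if $1\in A$, then mod-covering $4$ forces $N+2\in A$, after which $5$ has no possible mod-cover; if $3\in A$, the set is forced step by step to $S(A)=S(0,2,3,9,13,19)$ with $N-1\notin A$, and then $2N-2$ cannot be covered.

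There is also a concrete error in your setup that would make the planned enumeration unsound: you assert that ``the only admissible top-half witnesses are drawn from $\{N+4,N+5,N+7\}$.'' At this stage that is unjustified. The hypotheses exclude $N+1$, and adjacency to $N+4,N+5$ excludes $N+3$ and $N+6$, but $N+2$ is still available --- it is only ruled out later, in Lemma~\ref{lemma:char15g}, \emph{after} $N+4\notin A$ is known. Indeed, in the branch $1\in A$ the unique way to mod-cover $4$ is by $0,N+2$, so $N+2\in A$ is forced there; an enumeration that omits $N+2$ would declare $4$ uncoverable and reach a spurious contradiction. (Smaller omissions: you list $N,4,5,7\notin A$ but not $6\notin A$, which follows from the mod-AP $N+5,6,N+7$ and is used in narrowing the covers of $7$ to the two cases above.) To repair the proposal you need to redo the witness analysis allowing $N+2$, pin down the branching (on how $7$ is mod-covered), and then actually verify each branch terminates in a contradiction.
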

\begin{proof}
Suppose $N+4\in A$. Then $8,10,14$ are mod-covered and $4,5,6,7,N+3,N+6\not\in A$. In order to mod-cover $7$, we require either (Case I) $1\in A$ or (Case II) $3\in A$.

\medskip

Case I: Assume $1\in A$, then $2,7,9,13$ are mod-covered. We see $N+2\in A$ necessarily to mod-cover $4$ and thus $3$ is mod-covered. This is a contradiction since there is no way to mod-cover $5$.

\medskip

Case II: Assume $3\in A$, then $5,6,7,11$ are mod-covered and $N-1,N+2\not \in A$. We require $2\in A$ to cover $4$ Therefore, $1\not\in A$ and $12$ is mod-covered. We see $13\in A$ since it cannot be mod-covered which implies $1$ is mod-covered. Lastly $9\in A$ since it cannot be mod-covered.

\medskip

Therefore, $S(A)=S(0,2,3,9,13,19)$. We see that $N-1$ is needed to cover $2N-2$. This is a contradiction with $N-1\not\in A$.
\end{proof}

\begin{lemma}\label{lemma:char15g}
Let $A$ be a modular set modulo $2N$ with $\lambda(A)=15$ with $N+5\in A$ and $N+1,N+4\not\in A$. Then $N+2\not\in A$.
\end{lemma}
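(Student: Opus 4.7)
The plan is to suppose $N+2 \in A$ for contradiction, following the template of Lemmas \ref{lemma:char15a}--\ref{lemma:char15f}: record the immediate consequences of the hypotheses, force small elements into or out of $A$ via mod-cover requirements (with a short case split), and then derive a contradiction from the failure to cover some element near $2N$. Throughout I would invoke Remark \ref{remark:bigN} to take $N$ large and Remark \ref{remark:mod2N} for the standard exclusions $x\in A \Rightarrow x+N\notin A$.

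First I would record the immediate consequences of $0, N+2, N+5, N+7 \in A$ together with $N+1, N+4 \notin A$: the elements $2, 5, 7, N, N+1, N+3, N+4$ are excluded from $A$; the pairs $(0, N+2), (0, N+5), (0, N+7)$ mod-cover $4, 10, 14$; and the mod-APs among $N+2, N+5, N+7$ exclude $N-1$ and $N-3$. Next, the only mod-cover of $7$ is $(3, N+5)$, so $3 \in A$. I would then split on whether $1 \in A$: if so, the mod-AP $1, N+2, 3$ contradicts $3 \in A$; otherwise $1 \notin A$ is mod-covered by $(3, N+2)$, and the only remaining mod-cover of $5$ is $(9, N+7)$, forcing $9 \in A$. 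With $3, 9 \in A$ the mod-AP $3, N+6, 9$ rules out $N+6$, and brief case analyses then show that $8$, $12$, and $17$ each admit no mod-cover by the already-identified elements, so $8, 12, 17 \in A$.

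The contradiction should come from the top of the range. By Lemma \ref{lemma:omitted}, $\omega(A) < 15$, so $2N-1$ must be \emph{covered} (not merely mod-covered) by some $x < y$ in $A$ with $2y - x = 2N-1$. This forces $y \geq N$, and the exclusions above yield $A \cap [N, 2N-1] = \{N+2, N+5, N+7\}$, which give $x \in \{5, 11, 15\}$; since none of these lie in $A$, no cover exists. I anticipate the main obstacle to be the bookkeeping required to guarantee each ``only mod-cover'' claim above: for each of $5, 7, 8, 12, 17$ one must sweep over all $y \in A$ of the appropriate magnitude and verify that every alternative $x$ has already been excluded at an earlier step of the argument.
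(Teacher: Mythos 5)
Your proposal is correct and follows essentially the same route as the paper's proof: assume $N+2\in A$, force $3$, then $9$, then $8$, $12$ (and $17$) into $A$ via the unique-mod-cover arguments, and reach the same contradiction that $2N-1$ cannot be covered, since $A\cap[N,2N-1]=\{N+2,N+5,N+7\}$ would require $x\in\{5,11,15\}\subset A$. The only small caveat is your citation of Lemma \ref{lemma:omitted}, which is stated for independent sequences; for a modular set the needed bound $\omega(A)<\lambda(A)$ follows directly (if $z$ is mod-covered but not covered then $z=2y-x-2N\le 2\max(A)-2N=\lambda(A)-1$), which is exactly how the paper uses it implicitly.
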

\begin{proof}
Suppose $N+2\in A$. Then $4,10,14$ are mod-covered and $2,5,6,7,N+3,N+6\not\in A$. We need $3\in A$ to mod-cover $7$ and therefore $1,6,11$ are also mod-covered. We deduce $9\in A$ to mod-cover $5$ and then deduce $8\in A$ since it cannot be mod-covered. Hence, $2,13$ are mod-covered. Lastly, $12\in A$ since it cannot be mod-covered. Therefore, $S(A)=S(0,3,8,9,12,17)$. However, $2N-1$ cannot be covered. Contradiction.  
\end{proof}

\begin{lemma}\label{lemma:char15h}
There does not exist a modular set $A$ modulo $2N$ with $\lambda(A)=15$ with $N+5\in A$ and $N+1,N+2,N+4\not\in A$.
\end{lemma}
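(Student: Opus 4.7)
The plan is to follow the case-analysis template of Lemmas \ref{lemma:char15f} and \ref{lemma:char15g}, specialized to the hypothesis $N+5\in A$ with $N+1,N+2,N+4\not\in A$. First I pin down the elements of $A$ near $N$: since $N+5,N+7\in A$ and $A$ is 3-free modulo $2N$, the APs $N+3,N+5,N+7$ (common difference $2$) and $N+5,N+6,N+7$ (common difference $1$) force $N+3\not\in A$ and $N+6\not\in A$. Combined with $\max(A)=N+7$ and the given hypotheses, this yields $A\cap\{N,N+1,\ldots,2N-1\}=\{N+5,N+7\}$. By Remark \ref{remark:mod2N}, we also have $5,7\not\in A$, so both of these must be mod-covered by $A$.

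A short enumeration of the solutions to $2y-x\equiv 5\pmod{2N}$ and $2y-x\equiv 7\pmod{2N}$ with $x<y$ and $x,y\in A$ shows that the only admissible mod-covers of $5$ are the pairs $\{1,3\}$, $\{3,4\}$, and $\{9,N+7\}$, and the only admissible mod-covers of $7$ are $\{1,4\}$ and $\{3,N+5\}$. Pairing these possibilities gives a short list of subcases, in each of which a specific small subset of $\{1,3,4,9\}$ is forced into $A$ (namely one of $\{1,3,4\}$, $\{1,3\}$, $\{3,4\}$, $\{1,4,9\}$, or $\{3,9\}$).

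In each subcase I continue the standard forcing argument: for every small $z\not\in A$ below the repeat point, either $z$ is mod-covered by elements already known to lie in $A$, or else $z$ must itself be forced into $A$; simultaneously, 3-free modulo $2N$ excludes any element whose presence would create a mod-AP with previously forced elements (for instance, any $x\in A$ excludes $x+N\bmod 2N$, and a pair $\{x,N+5\}$ or $\{x,N+7\}$ excludes the corresponding third term of the induced mod-AP). This determines enough of $A$ that Lemma \ref{lemma:restriction} pins down $S(A)$ explicitly in every branch.

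Each subcase is then closed by inspecting how $2N-1,2N-2,\ldots$ are covered by $A$. As in Lemmas \ref{lemma:char15f} and \ref{lemma:char15g}, the need to cover some particular $2N-k$ forces a new element $N-m\in A$ for a small $m$, and this element either creates a forbidden mod-AP with one of $0$, $N+5$, $N+7$, or a previously forced small element, or else leaves a later $2N-k'$ with no admissible covering pair. The main obstacle is purely combinatorial bookkeeping: a handful of initial subcases, each with a few further branches arising from which of $2,6,8,9,\ldots$ end up in $A$, must be tracked and closed separately. The contradictions themselves are routine applications of the techniques already developed, but care is required to confirm that every branch terminates.
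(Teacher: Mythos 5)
Your setup is sound and matches the paper's: the deductions $N+3,N+6\notin A$, hence $5,7\notin A$ by Remark \ref{remark:mod2N}, and your enumeration of the possible mod-covers of $5$ (namely $\{1,3\}$, $\{3,4\}$, $\{9,N+7\}$) and of $7$ (namely $\{1,4\}$, $\{3,N+5\}$) are all correct, and the resulting branch structure is equivalent to the paper's Case I ($1,4\in A$) and Case II ($3\in A$, subdivided by membership of $1,2,9$). However, what you have written is a plan rather than a proof: in a lemma of this type the content \emph{is} the branch-by-branch forcing and the explicit identification of the contradiction in each branch (which element of $A$ gets forced, which $2N-k$ fails, etc.), and you defer all of that with the assertion that ``every branch terminates'' by either a forbidden mod-AP or an uncoverable $2N-k'$. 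That assertion needs to be verified, and it is not even true as stated.

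Concretely, in the branch where $5$ is mod-covered by $9,N+7$ and $7$ by $3,N+5$ with $2\notin A$ (the paper's Case II.2), the forcing does \emph{not} end in a mod-AP conflict or a covering failure: one is driven to $S(A)=S(0,3,4,9,12,13,16)$, which is a perfectly consistent modular (indeed independent) Stanley sequence. The contradiction there is of a different kind: since $S(A)$ coincides with a modular sequence of character $24$, and the character is an invariant of the sequence, this is incompatible with $\lambda(A)=15$ --- the same mechanism used in Lemma \ref{lemma:char5a} (and in Lemmas \ref{lemma:char15b}, \ref{lemma:char15i}, \ref{lemma:char15l}). Your proposed toolkit omits this character-comparison argument, so following your plan literally, that branch would not close; moreover a covering-failure argument there would have to depend on the unspecified relation between $N$ and the repeat structure of $S(0,3,4,9,12,13,16)$, which is exactly what the character argument avoids. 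So the missing ingredients are (i) the actual execution of the forcing and the explicit contradictions in each of your five subcases, and (ii) the recognition that at least one subcase must be closed by the character-invariance argument rather than by local covering obstructions.
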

\begin{proof}
Suppose $A$ is such a modular set. Observe that $10,14$ are mod-covered and $5,6,7,N+3,N+6\not\in A$. In order to mod-cover $7$, we require either (Case I) $1,4\in A$ or (Case II) $3\in A$.

\medskip

Case I: Assume $1,4\in A$. Therefore, $2,6,7,8,9,13$ are mod-covered. We require $3\in A$ to mod-cover $5$. Therefore, $11$ is mod-covered and $N-1\not\in A$. We have $12\in A$ since it cannot be mod-covered. We deduce that $S(A)=S(0,1,3,4,12,15)$. This is a contradiction since one cannot cover $2N-3$.

\medskip

Case II: Assume $3\in A$. Observe that $6,7,11$ are mod-covered and $N-1\not\in A$. We break this case into the following four subcases: (Case II.1) $2,9\in A$, (Case II.2) $9\in A$ and $2\not\in A$, (Case II.3) $9\not\in A$ and $1\in A$, and (Case II.4) $1,9\not\in A$.

Case II.1: In this case $2,9\in A$. We see that $1,4,5,8,12$ are mod-covered and $13\in A$ since it cannot be mod-covered. We deduce that $S(A)=S(0,2,3,9,13,19)$. This is a contradiction since there is no way to cover $2N-1$.

Case II.2: In this case $9\in A$ and $2\not\in A$. We see that $1,5$ are mod-covered. We see that $12\in A$ in order to mod-cover $2$. We deduce $4\in A$ since it cannot be mod-covered which implies $8$ is mod-covered. Lastly, $13\in A$ since it cannot be mod-covered. Therefore, $S(A)=S(0,3,4,9,12,13,16)$ and thus $S(A)$ is an independent Stanley sequence with character $\lambda(A)=24$. This is a contradiction with $\lambda(A)=15$.

Case II.3: In this case $1\in A, 9\not\in A$ and thus $2,5,9,13$ are mod-covered. We see that $4\in A$ since it cannot be mod-covered and thus $8$ is mod-covered. Lastly, we include $12\in A$ since it cannot be mod-covered. Therefore, $S(A)=S(0,1,3,4,12,15)$. This is a contradiction since there is no way to cover $2N-3$.

Case II.4: In this case $3\in A$ and $1,9\not\in A$. We see that $6,7,11$ are mod-covered. We require $4\in A$ to cover $5$. Therefore, $5,8$ are covered and $2\not\in A$. This is a contradiction since there is no way to mod-cover $9$.

Therefore, there does not exist a modular set $A$ modulo $2N$ with $\lambda(A)=15$ such that $N+5\in A$ and $N+1,N+2,N+4\not\in A$.
\end{proof}

Observe that Lemmas \ref{lemma:char15f}, \ref{lemma:char15g}, and \ref{lemma:char15h}, along with previous results, imply that $N+5\not\in A$ for a modular set $A$ modulo $2N$ with character $\lambda(A)=15$.

\begin{lemma}\label{lemma:char15i}
Let $A$ be a modular set modulo $2N$ with $\lambda(A)=15$ with $N+3\in A$ and $N+1,N+5\not\in A$. Then $N+4\not\in A$.
\end{lemma}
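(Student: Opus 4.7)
The plan is to argue by contradiction, assuming $N+4 \in A$. Combined with the hypotheses $N+3 \in A$, $N+1, N+5 \not\in A$, and $\max(A) = N+7 \in A$, Remark \ref{remark:mod2N} immediately gives $3, 4, 7 \not\in A$, while the AP $N+2, N+3, N+4$ rules out $N+2 \in A$. Pairs in $\{0, N+3, N+4, N+7\}$ then mod-cover $6$, $8$, $14$ (via $0$) and $N+10$, $N+11$ (via $N+7$); moreover the mod-APs $0, N+3, 6$, $0, N+4, 8$, and $0, N+7, 14$ force $6, 8, 14 \not\in A$ as well.

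The pivot of the argument is mod-covering $7$. Enumerating solutions of $2y - x \equiv 7 \pmod{2N}$ with $x < y$ in $A$, and using the exclusions just derived together with $N+2, N+5 \not\in A$, leaves precisely two possibilities: either (i) $1 \in A$, so that $\{1, N+4\}$ mod-covers $7$, or (ii) $5, N+6 \in A$, so that $\{5, N+6\}$ mod-covers $7$. I would dispose of each in turn, following the template of Lemmas \ref{lemma:char15f}--\ref{lemma:char15h}: repeatedly adjoin to $A$ any element that has no available mod-cover, exclude any element whose presence would create a mod-AP with elements already in $A$, and iterate until $S(A)$ is pinned down by Lemma \ref{lemma:restriction}.

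In Case (i), the pairs $(0, 1)$, $(1, N+3)$, $(1, N+7)$ mod-cover $2, 5, 13$ respectively, and the corresponding mod-APs in $A$ exclude each of these from $A$. The next uncovered element is $3$, and after discarding $\{1, 2\}$ (since $2 \not\in A$) and $\{1, N+2\}$ (since $N+2 \not\in A$), mod-covering $3$ requires $\{9, N+6\} \subset A$ or $11 \in A$. Each of these sub-cases is pursued by the usual deduction down to a completely determined $S(A)$, and the contradiction is then extracted by examining coverage of $2N-1, 2N-2, \ldots$: the only candidate element near $N$ that could provide such a covering creates a mod-AP with one of $0, N+3, N+4, N+7$. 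In Case (ii), $5 \in A$ makes $\{5, N+3\}$ mod-cover $1$ (so $1 \not\in A$) and $\{0, 5, 10\}$ a mod-AP (so $10 \not\in A$), while $N+6 \in A$ makes $\{N+3, N+6\}$ mod-cover $N+9$; these constraints similarly pin down $S(A)$, at which point an analogous obstruction in covering $2N - k$ for small $k$ delivers the contradiction.

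The main obstacle is the branching in Case (i), together with verifying that at each step the enumeration of mod-covers is exhaustive. At the terminal stage of every branch, the contradiction hinges on the fact that each element near $N$ needed to cover some $2N - k$ is already forbidden because it would form a mod-AP with $0$ or with one of the large fixed elements $N+3, N+4, N+7$; consequently, keeping careful track of which translates $x + N \pmod{2N}$ and which mod-APs with the large elements are already blocked is where essentially all of the labor lies.
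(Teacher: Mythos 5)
Your opening deductions are correct (the exclusions $3,4,7\not\in A$, $N+2\not\in A$, and $6,8,14\not\in A$, and the enumeration showing $7$ must be mod-covered by $\{1,N+4\}$ or $\{5,N+6\}$), but two things separate your plan from a proof. First, you miss that $5\not\in A$ outright: since $(N+3)+(N+7)\equiv 10\equiv 2\cdot 5\pmod{2N}$, the elements $N+3,5,N+7$ would form a mod-AP. The paper notes this at the start, which eliminates your Case (ii) instantly (it is vacuous, so your planned long deduction there is wasted) and, in Case (i), streamlines everything: $1\in A$ is forced, then $9,10\in A$ because they cannot be mod-covered (this kills your ``$11\in A$'' subbranch, since $11=2\cdot 10-9$ is then covered), then $N+6\in A$ to mod-cover $3$, and $S(A)=S(0,1,9,10,15)$ with no branching.

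Second, and more seriously, your proposed contradiction mechanism is both unverified and, for the surviving branch, not correct as stated. You claim the contradiction comes from some $2N-k$ being uncoverable because ``the only candidate element near $N$ ... creates a mod-AP with one of $0,N+3,N+4,N+7$.'' But in the configuration $S(A)=S(0,1,9,10,15)$ the elements $2N-1,\dots,2N-4$ are covered (by $9,N+4$; $10,N+4$; $9,N+3$; $10,N+3$), and to block, say, $2N-5$ you would have to exclude coverers such as $1,N-2$; the element $N-2$ forms no mod-AP with $0,N+3,N+4,N+7$ alone, so your stated criterion does not dispose of it---you would need the full structure of $S(0,1,9,10,15)$ modulo $27$ to rule such elements out. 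The paper avoids this entirely: once $S(A)=S(0,1,9,10,15)$ is pinned down via Lemma \ref{lemma:restriction}, that sequence is modular with character $24$, contradicting $\lambda(A)=15$. So the decisive step of your argument---extracting the contradiction---is a genuine gap, not just deferred bookkeeping.
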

\begin{proof}
Suppose $N+4\in A$. Then $6,8,14$ are mod-covered and $3,4,5,7\not\in A$. We need $1\in A$ to mod-cover $7$ and therefore $2,5,7,13$ are mod-covered. We see that $9,10$ are in $A$ since they cannot be mod-covered and thus $4,11$ are mod-covered. We need $N+6\in A$ to mod-cover $3$ which implies $12$ is also mod-covered. Thus $S(A)=S(0,1,9,10,15)$ and $S(A)$ is an independent Stanley sequence with character $\lambda(A)=24$. This is a contradiction with $\lambda(A)=15$.
\end{proof}

\begin{lemma}\label{lemma:char15j}
Let $A$ be a modular set modulo $2N$ with $\lambda(A)=15$ with $N+3\in A$ and $N+1,N+4,N+5\not\in A$. Then $N+2\not\in A$.
\end{lemma}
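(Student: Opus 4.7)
I would assume for contradiction that $N+2 \in A$, aiming to show that the element $7$ cannot be mod-covered, contradicting modularity. Since $\lambda(A)=15$ forces $\max(A)=N+7$, the set $A$ contains $\{0, N+2, N+3, N+7\}$. Remark \ref{remark:mod2N} then puts $2, 3, 7, N$ outside $A$; the hypotheses exclude $N+1, N+4, N+5$; and the pairs $(0,N+2), (0,N+3), (0,N+7)$ mod-cover $4, 6, 14$, removing these from $A$ as well.

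The crucial additional exclusion is $5 \not\in A$. The standard middle of the pair $\{N+3, N+7\}$ is $N+5$, which is already forbidden by hypothesis. However, modulo $2N$ the equation $2b \equiv (N+3)+(N+7) \equiv 10 \pmod{2N}$ has a \emph{second} solution $b=5$, so $\{N+3, 5, N+7\}$ is a mod-AP with common difference $N+2$. Were $5 \in A$, the set $A$ would fail to be 3-free modulo $2N$.

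Next, since $7 \not\in A$, it must be mod-covered, so I would enumerate all pairs $x<y$ in $\{0,\ldots,N+7\}$ with $2y-x \equiv 7 \pmod{2N}$. For $N$ large (Remark \ref{remark:bigN}), only $2y-x=7$ and $2y-x=2N+7$ are feasible; the first yields $(1,4), (3,5), (5,6)$ and the second yields $(1,N+4), (3,N+5), (5,N+6), (7,N+7)$. Each of these seven candidates contains an element already excluded from $A$, namely one of $3, 4, 5, 7, N+4, N+5$. Hence $7$ cannot be mod-covered by $A$, contradicting modularity and completing the proof.

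The main obstacle is the identification of the wrap-around middle at $5$; without the exclusion $5 \not\in A$, the pair $(5,N+6)$ would survive as a potential mod-cover of $7$ whenever $N+6 \in A$, and the short argument above would not close. Once $5 \not\in A$ is in hand, the remaining enumeration is purely mechanical, which is why this lemma avoids the extensive subcase analysis needed in some of the earlier lemmas.
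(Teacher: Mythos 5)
Your proof is correct and follows the same route as the paper's (much terser) argument: deduce $2,3,5,7\not\in A$ and $4,6,14$ mod-covered, then observe that every candidate pair mod-covering $7$ contains an excluded element. Your explicit justification that $5\not\in A$ via the wrap-around mod-AP $\{N+3,5,N+7\}$ and your enumeration of the pairs are exactly the details the paper leaves implicit.
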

\begin{proof}
Suppose $N+2\in A$. Then $4,6,14$ are mod-covered and $2,3,5,7\not\in A$. This is a contradiction since there is no way to mod-cover $7$. 
\end{proof}

\begin{lemma}\label{lemma:char15k}
There does not exist a modular set $A$ modulo $2N$ with $\lambda(A)=15$ with $N+3\in A$ and $N+1,N+2,N+4,N+5\not\in A$. 
\end{lemma}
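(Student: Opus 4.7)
The plan is to mimic the case analysis used in Lemmas \ref{lemma:char15f}--\ref{lemma:char15j}: force small elements in or out of $A$, identify $S(A)$ via Lemma \ref{lemma:restriction}, and derive the contradiction either by computing a character different from $15$ or by obstructing the coverage of some $2N-k$. To set up, $\lambda(A)=15$ gives $\max(A)=N+7\in A$, and Remark \ref{remark:mod2N} yields $3,7,N\not\in A$. The mod-AP $(N-1,N+3,N+7)$ forces $N-1\not\in A$, and $(N-3,0,N+3)$ and $(N-7,0,N+7)$ force $N-3,N-7\not\in A$. The direct mod-covers coming from $\{0,N+3,N+7\}$ are $6$, $14$, and $N+11$; the mod-APs $(0,N+3,6)$ and $(0,N+7,14)$ additionally force $6,14\not\in A$.

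Next I would split into cases according to how $7\not\in A$ is mod-covered. Solving $2y-x\equiv 7\pmod{2N}$ with $x<y$ in $A$ and all the above exclusions in force, the only admissible pairs are $(1,4)$ and $(5,N+6)$; the ostensible pair $(5,6)$ is ruled out by $6\not\in A$.

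In each case I would chase forced inclusions --- small elements not otherwise mod-coverable (commonly $9$, $12$, or $13$ in the analogous earlier lemmas) --- and forced exclusions from fresh mod-APs. For instance, Case $(1,4)\subset A$ immediately forces $5\not\in A$ via the mod-AP $(1,N+3,5)$, after which $9$ must be in $A$ because every other mod-cover of $9$ uses one of $3,5,7$; Case $(5,N+6)\subset A$ forces $1\not\in A$ via $(5,N+3,1)$. Each branch collapses to at most a handful of configurations for $A\cap[0,N]$. Lemma \ref{lemma:restriction} then identifies $S(A)$ with a Stanley sequence $S(a_0,\ldots,a_j)$ from an explicit short list, whose character I compute directly; any mismatch with $15$ closes that configuration. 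Otherwise I would analyze the covering (not merely mod-covering) of $2N-1,2N-2,\ldots$, which is valid because each exceeds $\omega(A)<15$. The pool of large partners available for these covers reduces to $\{N+3,N+7\}$ together with possibly $N+6$, and once it is exhausted one is forced to place some $N-k\in A$ for small $k$, which then either duplicates a residue (Remark \ref{remark:mod2N}) or forms a mod-AP with $\{0,N+3,N+7\}$ or a forced small element.

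The main obstacle I anticipate is Case $(1,4)\subset A$: this is the richer branch, with sub-configurations governed by whether $N+6$ lies in $A$ and by how $11$ and $12$ are mod-covered. Each subcase has to be pushed through the $2N-k$ analysis, where the contradiction typically surfaces only around $2N-9$ or $2N-10$; tracking which $N-k$ remain admissible after all mod-AP exclusions (from $\{0,N+3,N+7\}$, from $(N+3,N+7)$, from $9$, and from any additional forced small elements) is the delicate bookkeeping. The case $(5,N+6)\subset A$ should close more quickly, along the lines of Lemma \ref{lemma:char15g}.
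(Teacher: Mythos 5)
Your setup is sound and your intended strategy is the same as the paper's (chase forced inclusions/exclusions, identify $S(A)$ via Lemma \ref{lemma:restriction}, then contradict either the character value or the coverage of some $2N-k$), but what you have written is a plan, not a proof: the decisive steps are deferred rather than carried out. You never pin down the forced configurations, and you never exhibit a specific element $2N-k$ that cannot be covered; you only predict that a contradiction ``typically surfaces only around $2N-9$ or $2N-10$.'' In fact that prediction is off. The paper's argument splits on whether $11\in A$: if $11\in A$, one is forced to $1,4\in A$ (the only remaining mod-cover of $7$), then $9,12\in A$ since they cannot be mod-covered, giving $S(A)=S(0,1,4,11,12,16)$, and the contradiction is already that $2N-1$ cannot be covered; if $11\not\in A$, then mod-covering $3$ forces $9,N+6\in A$, again $1,4\in A$, giving $S(A)=S(0,1,4,9,15)$, and the contradiction is that $2N-11$ cannot be covered. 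None of this bookkeeping --- which is the entire content of the lemma --- appears in your write-up, so the argument as submitted is incomplete.

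A smaller but telling omission: you never deduce $5\not\in A$, which follows immediately from the mod-AP $N+3,\,5,\,N+7$ (since $2\cdot 5\equiv (N+3)+(N+7)\pmod{2N}$). With that exclusion, your second case $(5,N+6)$ covering $7$ is vacuous, and the only admissible mod-cover of $7$ is the pair $(1,4)$; there is no genuine two-case split at that stage. Carrying the empty case is not a logical error, but it shows the forced-exclusion analysis was not pushed far enough, and the branch you flag as the ``main obstacle'' is exactly the part you would still have to do to have a proof.
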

\begin{proof}
We see that $6,14$ are mod-covered and $3,5,7,N-1\not\in A$. We divide the argument into the cases where either (Case I) $11\in A$ or (Case II) $11\not\in A$.

\medskip

Case I: In this case, $3$ is mod-covered. We see that $1,4\in A$ in order to mod-cover $7$. Therefore, $2,5,6,7,8,10,13$ are mod-covered and $N+6\not\in A$. We see that $9,12\in A$ since they cannot be mod-covered. Therefore, $S(A)=S(0,1,4,11,12,16)$. This is a contradiction since we cannot cover $2N-1$.

\medskip

Case II: We need $9,N+6\in A$ to mod-cover $3$ and therefore $5,12$ are also mod-covered. We require $1,4\in A$ in order to mod-cover $7$. Therefore, $2,8,10,11,13$ are also mod-covered and $N-5,N-4\not\in A$. Hence, $S(A)=S(0,1,4,9,15)$. This is a contradiction since there is no way to cover $2N-11$.

\medskip

Therefore, there does not exist a modular set $A$ modulo $2N$ with $\lambda(A)=15$ such that $N+3\in A$ and $N+1,N+2,N+4,N+5\not\in A$.
\end{proof}

Observe that Lemmas \ref{lemma:char15i}, \ref{lemma:char15j}, and \ref{lemma:char15k} imply that $N+3\not\in A$ for a modular set $A$ modulo $2N$ with character $\lambda(A)=15$.

\begin{lemma}\label{lemma:char15l}
Let $A$ be a modular set modulo $2N$ with $\lambda(A)=15$ with $N+2\in A$ and $N+1,N+3,N+5\not\in A$. Then $N+4\not\in A$.
\end{lemma}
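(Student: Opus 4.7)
The plan is to assume for contradiction that $N+4\in A$ and then run the cascading-deduction template established in Lemmas \ref{lemma:char15a}--\ref{lemma:char15k}. Since $\lambda(A)=15$ forces $\max(A)=N+7$, we have $\{0,N+2,N+4,N+7\}\subseteq A$ together with $\{N+1,N+3,N+5\}\cap A=\emptyset$. Remark \ref{remark:mod2N} applied to $0,N+2,N+4,N+7$ yields $\{N,2,4,7\}\cap A=\emptyset$, and $3$-freeness modulo $2N$ applied to the progressions $N+2,N+4,N+6$, $N+4,N+7,N+10$, and $N+2,N+7,N+12$ excludes $N+6,N+10,N+12$ from $A$. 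The pairs $(0,N+2),(0,N+4),(0,N+7),(N+2,N+4),(N+4,N+7),(N+2,N+7)$ mod-cover $4,8,14,N+6,N+10,N+12$ respectively.

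The decisive step is that $7\notin A$ must itself be mod-covered. A direct enumeration of pairs $(x,y)$ with $2y-x\equiv 7\pmod{2N}$, after discarding those incompatible with the exclusions above, leaves exactly three admissible witnesses: $(1,N+4)$, $(3,5)$, and $(5,6)$. I would then branch into Case I ($1\in A$), Case II ($1\notin A$ and $\{3,5\}\subseteq A$), and Case III ($1,3\notin A$ and $\{5,6\}\subseteq A$). In each case, I would iterate two complementary deductions: every new element of $A$ creates further forbidden mod-APs that exclude additional residues, and every newly excluded residue must in turn be mod-covered, frequently forcing another inclusion. For instance, in Case I one immediately obtains $3,13\notin A$ from the mod-APs with $N+2$ and $N+7$, and must then sub-branch on whether $5\in A$ (yielding mod-covers of $3,9,10,2N-1,2N-3$) or $5\notin A$ (forcing $9\in A$ as the only remaining mod-cover of $5$).

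Each terminal branch should close via one of the two standard contradiction mechanisms of this paper: either $S(A)$ is pinned down to an explicit sequence $S(a_0,\dots,a_k)$ whose character (computed directly or via the $\otimes\{0,1\}$ reduction of Remark \ref{remark:bigN}) is distinct from $15$, contradicting $\lambda(A)=15$; or some residue $2N-k$ for small $k$ admits no cover by $A$, because the only admissible larger partner $y$ is either one of $N+2,N+4,N+7$ (forcing an $x$ that fails to lie in $A$) or of the form $N-j$ for small $j$, and each such $N-j$ is itself ruled out by a mod-AP with one of $0,N+2,N+4,N+7$. The main obstacle will be bookkeeping across three top-level cases and their sub-cases, but the hypothesis $\{N+2,N+4\}\subseteq A$ is so restrictive that each branch should close in only a few further deductions.
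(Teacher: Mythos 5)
Your setup matches the paper's: assume $N+4\in A$, record the forced exclusions and mod-covers, branch on how $7$ is mod-covered, and close each branch either by pinning down $S(A)$ and contradicting $\lambda(A)=15$ or by exhibiting a residue $2N-k$ that cannot be covered. However, two things keep the proposal from being a proof. First, the initial bookkeeping is incomplete in a way that distorts your case structure: since $2\cdot 3-(N+2)\equiv N+4\pmod{2N}$, the triple $N+2,3,N+4$ is a mod-AP, so $3\not\in A$ from the outset; hence $(3,5)$ is not an admissible witness for mod-covering $7$, and the true dichotomy is exactly the paper's two cases, namely $1\in A$ (witness $(1,N+4)$) versus $1\not\in A$, which forces $5,6\in A$. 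You likewise omit $N-2,N-3\not\in A$ (from the mod-APs $N-2,0,N+2$ and $N-3,N+2,N+7$), exclusions that are needed precisely at the endgame, when one argues that certain residues of the form $2N-k$ admit no cover.

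Second, and more seriously, you never actually close any branch: the terminal contradictions are asserted (``each branch should close in only a few further deductions'') rather than derived, and for a lemma of this kind the explicit finite verification is the entire content. Concretely, in the paper's Case I with $5\in A$ one is forced to $6\in A$, pins $S(A)=S(0,1,5,6,15)$ via Lemma \ref{lemma:restriction}, then must put $N-1\in A$ to cover $2N-3$ and finds that $2N-6$ cannot be covered; in Case I with $5\not\in A$ one is forced to $9,6,10\in A$, giving $S(A)=S(0,1,6,9,10,15)$, whose character is $24\ne 15$; and in Case II ($1\not\in A$) one is forced to $5,6,13,11\in A$, giving $S(A)=S(0,5,6,11,13,18)$, with $2N-6$ again uncoverable. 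None of these deduction chains, nor the specific contradictions, appear in your proposal, so while the strategy would succeed if executed, the lemma remains unproved as written.
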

\begin{proof}
Suppose $N+4\in A$. Then $4,8,14,N+6$ are mod-covered and $2,3,7,N-2,N-3\not\in A$. We break our proof into the cases where either (Case I) $1\in A$ or (Case II) $1\not\in A$.

\medskip

Case I: Suppose $1\in A$. Then $2,3,7,13$ are mod-covered. We then have two further subcases: (Case I.1) $5\in A$ and (Case I.2) $5\not\in A$.

Case I.1: If $5\in A$, then $9,10$ are mod-covered. We see that $6\in A$ since it cannot be mod-covered and thus $11,12$ are mod-covered. Hence, $S(A)=S(0,1,5,6,15)$. We see that $N-1\in A$ is necessary to mod-cover $2N-3$. This is a contradiction since there is no way to cover $2N-6$.

Case I.2: If $5\not\in A$, then $9\in A$ is needed to mod-cover $5$. We see that $6\in A$ since it cannot be mod-covered which implies $11,12$ are covered. Lastly, $10\in A$ since it cannot be mod-covered. Therefore, $S(A)=S(0,1,6,9,10,15)$, an independent Stanley sequence with character $\lambda(A)=24$. This is a contradiction with $\lambda(A)=15$.

\medskip

Case II: If $1\not\in A$, then one requires $5,6\in A$ to mod-cover $7$ and therefore $2,3,7,9,10,12$ are mod-covered. We require $13\in A$ to mod-cover $1$. Lastly, $11\in A$ since it cannot be mod-covered. Therefore, $S(A)=S(0,5,6,11,13,18)$. This is a contradiction since there is no way to cover $2N-6$.
\end{proof}

\begin{lemma}\label{lemma:char15m}
Let $A$ be a modular set modulo $2N$ with $\lambda(A)=15$ with $N+2\in A$ and $N+1,N+3,N+4,N+5\not\in A$. Then $N+6\not\in A$.
\end{lemma}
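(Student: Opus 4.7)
The plan is to assume for contradiction that $N+6 \in A$. Together with the hypotheses and $\max(A) = N+7 \in A$, this gives $\{0, N+2, N+6, N+7\} \subseteq A$ and $N+1, N+3, N+4, N+5 \notin A$. I first record immediate consequences: Remark \ref{remark:mod2N} applied to each of $N+2, N+6, N+7$ together with the mod-APs formed by pairs within $\{0, N+2, N+6, N+7\}$ force $2, 4, 6, 7, 12, 14, N, N+8, N+10, N+12$ all to miss $A$. Mod-covering $7$ then requires either the pair $(3, 5)$ or the pair $(5, N+6)$, so $5 \in A$ in either event. I split on whether $3 \in A$.

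In Case 2 ($3 \notin A$), mod-covering $3$ requires one of $1, 9, 11$ to lie in $A$. The branch $9 \in A$ collapses at once, since $(9, N+7)$ mod-covers $5$, contradicting $5 \in A$; the branch $11 \in A$ collapses because $8 \in A$ is then forced (as $8$ admits no mod-cover by the currently determined elements) and $(5, 8)$ mod-covers $11$, contradicting $11 \in A$. The remaining sub-case $1 \in A$ propagates in parallel to Case 1: $8, 17, 18, 20, 21 \in A$ are forced successively by no-mod-cover arguments, $N - 1 \in A$ is forced to mod-cover $2N - 2$, and further elements $N - 5, N - 6 \in A$ appear, while the mod-APs $\{N-7, N-6, N-5\}$, $\{N-8, N-1, N+6\}$, $\{N-9, N-1, N+7\}$, and $\{N-12, N-5, N+2\}$ rule out most candidates among the $N - k$. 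Lemma \ref{lemma:restriction} then identifies $S(A) = S(0, 1, 5, 8, 17)$ (since the initial segment of $A$ is forced and $17 > \omega(A) \geq 14$), and the analysis culminates in a coverage failure near $2N$ of the same type used in Case 1 below.

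In Case 1 ($3 \in A$), the pair $(3, N+2)$ forces $1 \notin A$, and $8 \in A$ is forced because $8$ admits no mod-cover by the currently determined elements. This makes $13 \notin A$ via $(3, 8)$, after which $15, 17, 18, 20 \in A$ are each forced by the same no-mod-cover argument. Turning to the large end, $2N - 2$ forces $N - 1 \in A$ (via $(0, N-1)$); the elements $2N - 1, 2N - 3, \ldots, 2N - 8$ are then all covered by the standard pairs $(15, N+7), (17, N+7), (8, N+2), (17, N+6), (18, N+6), (5, N-1), (20, N+6)$. The decisive step is $2N - 9$: any $y \in A$ covering this element must satisfy $y \geq N - 4$, and for each candidate the required $x = 2y - 2N + 9$ either fails to lie in $A$ (the choices $y = N-1, N+2, N+6, N+7, N-4$ give $x = 7, 13, 21, 23, 1$ respectively, all excluded from $A$ in Case 1) or $y \in A$ itself creates a 3-term AP entirely in $A$ ($y = N-2$ gives $\{N-2, N+2, N+6\}$ and $y = N-3$ gives $\{N-3, N+2, N+7\}$). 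No valid $y$ remains, producing the desired contradiction.

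The main obstacle is the sub-case $1 \in A$ of Case 2: because the forced initial segment matches $S(0, 1, 5, 8, 17)$, which is not an independent Stanley sequence, the coverage behavior near $2N$ is more delicate, and identifying a single uncoverable $2N - k$ requires more rounds of forcing $N - k$ elements into and out of $A$. The argument proceeds in the same spirit as Case 1 and mirrors the bookkeeping already established in Lemmas \ref{lemma:char11f} and \ref{lemma:char15l}.
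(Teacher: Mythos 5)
Your set-up and your Case 1 are sound (though circuitous: once $3\in A$ forces $1\notin A$ via the mod-AP $(3,N+2,1)$, the element $2$ already has no possible mod-cover, since its only candidates are $(0,1)$, $(2,N+2)$, $(10,N+6)$, $(12,N+7)$; so that case dies immediately, and in fact no case split is needed at all: $8\in A$ is forced unconditionally, then $1\in A$ is forced to mod-cover $2$, and then $3$ is mod-covered by $(1,N+2)$, which is exactly how the paper proceeds). The genuine gap is in the one branch that actually matters, your Case 2 with $1\in A$, which is the entire content of the lemma. There you never identify an uncoverable element -- you only assert that ``the analysis culminates in a coverage failure near $2N$ of the same type used in Case 1'' -- and the intermediate claims you do make are false. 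In particular $N-5\in A$ is impossible: $2\cdot 1-(N-5)\equiv N+7\pmod{2N}$, so $N-5,1,N+7$ form a mod-AP and $1,N+7\in A$; likewise nothing forces $N-6\in A$, so the mod-APs $\{N-7,N-6,N-5\}$ and $\{N-12,N-5,N+2\}$ you invoke rest on memberships that cannot hold. (Also, the inequality you want is $\omega(A)\le 14<17$, not $\omega(A)\ge 14$.)

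The correct completion, which is the paper's proof, runs: with $0,1,5,8\in A$ one checks $9,\dots,16$ are all mod-covered while $17$ cannot be, so $17\in A$ and Lemma \ref{lemma:restriction} gives $S(A)=S(0,1,5,8,17)$; moreover $N-2,N-3,N-4,N-5\notin A$ (mod-APs with the pairs $(N+2,N+6)$, $(N+2,N+7)$, $(1,N+6)$, $(1,N+7)$), and covering $2N-2$ forces $N-1\in A$. Then $2N-11$ cannot be covered: any cover needs $y\ge N-5$, the only available $y\in A$ are $N-1,N+2,N+6,N+7$, and the corresponding $x=9,15,23,25$ all fail to lie in $S(0,1,5,8,17)$ ($23$ and $25$ are blocked by the APs $17,20,23$ and $17,21,25$). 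Until you exhibit such an explicit uncoverable element in this sub-case, the proof is incomplete precisely where the paper's argument lives.
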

\begin{proof}
Suppose $N+6\in A$, then $4,12,14$ are mod-covered and $2,6,7,N-3,N-2\not\in A$. We see that $5\in A$ in order to cover $7$ and therefore $7,9,10$ are mod-covered. We conclude that $8\in A$ since it cannot be mod-covered which implies $6,11$ are mod-covered. We need $1\in A$ to cover $2$. Hence, $2,3,13$ are mod-covered and $N-4,N-5\not\in A$. Therefore, $S(A)=S(0,1,5,8,17)$. 

We need $N-1\in A$ to cover $2N-2$. This is a contradiction since there is no way to cover $2N-11$.
\end{proof}

\begin{lemma}\label{lemma:char15n}
There does not exist a modular set $A$ modulo $2N$ with $\lambda(A)=15$ with $N+2\in A$ and $N+1,N+3,N+4,N+5,N+6\not\in A$.
\end{lemma}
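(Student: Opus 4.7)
The plan is to follow the case-analysis template of the preceding lemmas. From $\lambda(A)=15$ we have $\max(A)=N+7\in A$. Combining Remark~\ref{remark:mod2N} (so $2,7,N\not\in A$), the hypothesis, and the mod-covers from the pairs $(0,N+2),(0,N+7),(N+2,N+7)$ (yielding $4,14,N+12\not\in A$), the forbidden list up to $N+7$ is $\{2,4,7,14,N,N+1,N+3,N+4,N+5,N+6,N+12\}$. The central step is to determine how $7\not\in A$ is mod-covered: enumerating pairs $(x,y)\in A$ with $x<y$ and $2y-x\equiv 7\pmod{2N}$, the wrap-around options all require either $x=7$ or $y\in\{N+4,N+5,N+6\}$ and so are excluded; only the small pairs $(1,4),(3,5),(5,6)$ remain. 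Hence one of $\{1,4\}$, $\{3,5\}$, or $\{5,6\}$ must be contained in $A$.

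Case~I ($\{1,4\}\subset A$) fails immediately: $(4,N+2)$ mod-covers $2N\equiv 0\in A$, forming a mod-AP in $A$. In Case~II ($\{3,5\}\subset A$), 3-freeness and the requirement to mod-cover the small elements force $12\in A$ (only $(12,N+7)$ mod-covers $2$), $8\in A$ (no valid pair otherwise mod-covers $8$), and $15\in A$ (no mod-cover or non-modular cover is available, yet $\omega(A)<15$). Lemma~\ref{lemma:restriction} then gives $S(A)=S(0,3,5,8,12,15)$, whose initial segment is $0,3,5,8,12,15,17,20,33,36,38,41,45,48,50,53,\ldots$. Coverage of $2N-2$ forces $N-1\in A$ via $(0,N-1)$, as the other pairs $(6,N+2)$ and $(16,N+7)$ fail since $6,16\not\in A$; coverage of $2N-12$ then forces $N-2\in A$ via $(8,N-2)$, as the candidates $y\in\{N+7,N+2,N-1\}$ require $x\in\{26,16,10\}$, all excluded. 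But $S(0,3,5,8,12,15)$ has no two consecutive integers, so $N-1,N-2\in A$ is impossible.

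Case~III ($\{5,6\}\subset A$) proceeds analogously: mod-covering $2$ forces $1\in A$ (since $(12,N+7)$ is unavailable because $12\not\in A$), then $15\in A$ by the same $\omega(A)<15$ argument, giving $S(A)=S(0,1,5,6,15)$. Here the contradiction is more direct: to cover $2N-6$, the candidates $y\in\{N-2,N-1,N+2,N+7\}$ require $x\in\{2,4,10,20\}$, all of which lie outside $A$, leaving only $y=N-3$ with $x=0$; but $N-3\in A$ combined with $N+2,N+7\in A$ gives the 3-AP $\{N-3,N+2,N+7\}\subset A$, contradicting 3-freeness. The main obstacle is the Case~II argument, specifically rigorously establishing the no-consecutive-integers property of $S(0,3,5,8,12,15)$, which can be verified by direct computation of sufficiently many terms or by showing this sequence is independent with $\kappa=3$ and repeat factor $33$ (so its elements lie in the residues $\{0,3,5,8,12,15,17,20\}\pmod{33}$, which contains no two consecutive values).
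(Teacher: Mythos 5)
Your overall strategy is the paper's: force $5\in A$ (equivalently, rule out the $(1,4)$ cover of $7$, since $4$ is mod-covered by $0,N+2$), split according to whether $7$ is mod-covered by $(3,5)$ or $(5,6)$, pin down the forced sequences $S(0,3,5,8,12,15)$ and $S(0,1,5,6,15)$ via Lemma~\ref{lemma:restriction}, and then derive a contradiction from how $A$ must cover elements just below $2N$. Your Case~III is correct and in fact slightly more direct than the paper's corresponding case (you close it with $2N-6$ and the genuine AP $N-3,N+2,N+7$, without needing the intermediate step through $2N-3$).

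Case~II, however, has a genuine gap. First, the enumeration for covering $2N-12$ is incomplete: with $2y-x=2N-12$ one must allow every $y\in\{N-6,\dots,N+7\}$, and the pair $(0,N-6)$ is a live candidate --- nothing established up to that point excludes $N-6$ from $A$ (its mod-covers with the known elements land on $N+6$, $N+12$, $2N-12$, etc., none of which are in $A$). So the step ``coverage of $2N-12$ forces $N-2\in A$'' does not follow, and the final ``no two consecutive integers in $S(0,3,5,8,12,15)$'' contradiction (itself only sketched, as you acknowledge) never gets off the ground. Second, you overlooked two exclusions that make this whole detour unnecessary and that the paper uses: $N-2\notin A$ from the outset, because $N-2,0,N+2$ is a mod-AP ($2\cdot 0-(N-2)\equiv N+2$), and, once $3\in A$, also $N-1\notin A$, because $N-1,3,N+7$ is a mod-AP ($2\cdot 3-(N-1)\equiv N+7$). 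With these, your own forcing ``$N-1\in A$ via $(0,N-1)$'' is already impossible, i.e.\ $2N-2$ simply cannot be covered (the only candidate pairs are $(0,N-1)$, $(6,N+2)$, $(16,N+7)$ and none is available), which is exactly how the paper ends this case. As written, your Case~II needs either this repair or a completed analysis of the omitted $N-6$ branch.
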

\begin{proof}
Suppose $A$ is such a modular set. We see that $4,14$ are mod-covered and $2,7,N-3,N-2\not\in A$. We need $5\in A$ to mod-cover $7$ and thus $9,10\not\in A$. We now break the argument up into the cases where either (Case I) $1\in A$ or (Case II) $1\not\in A$.

\medskip

Case I: Suppose $1\in A$. Then $2,3,13$ are mod-covered and $N-5\not\in A$. We see that $6\in A$ to cover $7$ and thus $8,11,12$ are also mod-covered. Therefore, $S(A)=S(0,1,5,6,15)$. We require $N-1\in A$ in order to cover $2N-3$. This is a contradiction since there is no way to cover $2N-6$.

\medskip

Case II: Suppose $1\not\in A$. We need $12\in A$ to mod-cover $2$ and thus $6\not\in A$. We need $3\in A$ to mod-cover $7$ which implies $1, 6,11$ are mod-covered and $N-1\not\in A$. We see that $8\in A$ since it cannot be mod-covered and therefore $13$ is covered. Therefore, $S(A)=S(0,3,5,8,12,15)$. This is a contradiction since one cannot cover $2N-2$. 

\medskip

Therefore, there does not exist a modular set $A$ modulo $2N$ with $\lambda(A)=15$ such that $N+2\in A$ and $N+1,N+3,N+4,N+5,N+6\not\in A$.
\end{proof}

Observe that Lemmas \ref{lemma:char15m} and \ref{lemma:char15n}, along with previous results, imply that $N+2\not\in A$ for a modular set $A$ modulo $2N$ with character $\lambda(A)=15$.

\begin{lemma}\label{lemma:char15o}
Let $A$ be a modular set modulo $2N$ with $\lambda(A)=15$ with $N+6\in A$ and $N+1,N+2,N+3,N+5\not\in A$. Then $N+4\not\in A$.
\end{lemma}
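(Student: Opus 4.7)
The plan is to assume $N+4\in A$ and derive a contradiction, following the template of earlier lemmas such as Lemma~\ref{lemma:char15l}. First I would catalog the immediate consequences of $\{0,N+4,N+6,N+7\}\subseteq A$ together with the hypothesis $\{N+1,N+2,N+3,N+5\}\cap A=\emptyset$: by Remark~\ref{remark:mod2N}, $\{4,6,7\}\cap A=\emptyset$; the pairs $(0,N+4),(0,N+6),(0,N+7)$ mod-cover $\{8,12,14\}$ and hence exclude them from $A$; and the mod-APs within $\{N+4,N+6,N+7\}$ exclude $\{N+8,N+10\}$. Since $7\notin A$ must be mod-covered and the only candidate pairs are $(1,N+4)$, $(3,5)$, $(5,N+6)$, this splits into Case~I ($1\in A$) and Case~II ($1\notin A,\, 5\in A$).

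In Case~II, the mod-APs $(5,N+4,3)$ and $(5,N+7,9)$ exclude $3,9$ from $A$; because $2$ has no other mod-cover, $2\in A$. Mod-covering $1$ requires $11$ or $13$ in $A$. The subcase $11\notin A,\, 13\in A$ fails immediately since $11$ then has no mod-cover. The subcase $11\in A$ forces $13\in A$ by the same reasoning applied to $13$, and the mod-AP $(11,13,15)$ excludes $15$; then $16\in A$ is forced (as the only way to mod-cover $16$), whence Lemma~\ref{lemma:restriction} yields $S(A)=S(0,2,5,11,13,16)$. Coverage analysis of $2N-k$ then produces the contradiction in the same manner as in Lemmas~\ref{lemma:char15l}--\ref{lemma:char15n}.

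Case~I is more delicate. The AP $(0,1,2)$ gives $2\notin A$; the only viable mod-cover of $4$ is $(10,N+7)$, so $10\in A$; the AP $(0,5,10)$ then gives $5\notin A$; and the only viable mod-cover of $6$ is $(0,3)$, so $3\in A$. The mod-APs $(3,N+6,9),(3,N+7,11),(3,10,17)$ force $9,11,17\notin A$, hence $A\cap[0,14]=\{0,1,3,10\}$. Since $15$ admits no actual cover by the elements identified so far, $15\in A$, and Lemma~\ref{lemma:restriction} gives $S(A)=S(0,1,3,10,15)$. Then $2N-1,2N-2,2N-3,2N-4$ are covered by $(15,N+7),(10,N+4),(15,N+6),(16,N+6)$ respectively (with $16\in A$ from the greedy continuation), but $2N-5$ requires either one of $\{9,13,17\}$ in $A$ (all excluded) or one of $N-1,N-2$ in $A$ (paired with $3$ or $1$, respectively). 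A further mod-AP analysis, exploiting the interaction of the elements of $S(0,1,3,10,15)$ forced into $A$ near $N$ with $\{N+4,N+6,N+7\}$, rules out both $N-1$ and $N-2$, completing the contradiction.

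The main obstacle will be the final step of Case~I: ruling out both $N-1$ and $N-2$ from $A$ requires a careful interaction between the mod-AP constraints imposed by $\{N+4,N+6,N+7\}\subseteq A$ and the concrete structure of the greedy Stanley sequence $S(0,1,3,10,15)$. As an alternative, one might exhibit a later $2N-k$ whose only possible covers all involve elements that have been excluded.
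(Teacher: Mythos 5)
Your Case I tracks the paper's argument closely (you force $1,3,10\in A$, conclude $S(A)=S(0,1,3,10,15)$, and aim the contradiction at $2N-5$), but you stop exactly where the proof has to be finished: you never actually rule out $N-1$ and $N-2$ from $A$, and you flag this as ``the main obstacle.'' There is no obstacle --- it is the same wrap-around mod-AP device used throughout the paper (cf.\ Remark~\ref{remark:mod2N} and, e.g., Lemma~\ref{lemma:char9a}). Since $3,N+7\in A$ and $2\cdot 3-(N-1)\equiv N+7\pmod{2N}$, having $N-1\in A$ would put the mod-AP $N-1,3,N+7$ inside $A$; likewise $2\cdot 1-(N-2)\equiv N+4\pmod{2N}$, so $N-2\in A$ would give the mod-AP $N-2,1,N+4$. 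With $N-1,N-2\notin A$ (and $13,17,19\notin A$ --- note your candidate set $\{9,13,17\}$ should be $\{13,17,19\}$, coming from $y=N+4,N+6,N+7$, though all are excluded either way), $2N-5$ has no cover, which is precisely the paper's contradiction. As written, your Case I is a correct skeleton with its decisive step left unproved.

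The second gap is structural: your Case II should not exist. Since $N+4,N+6\in A$ and $2\cdot 5-(N+4)\equiv N+6\pmod{2N}$, the element $5$ can never lie in $A$; this is how the paper gets $4,5,6,7\notin A$ at the outset and proceeds with no case split, forcing $1\in A$ as the only mod-cover of $7$. Because you missed this, you build a whole branch on the impossible hypothesis $5\in A$, and inside it you again defer the contradiction to an unexecuted ``coverage analysis of $2N-k$ \dots in the same manner as in Lemmas~\ref{lemma:char15l}--\ref{lemma:char15n}''; you also assert that $2$ has no mod-cover other than $(0,1)$ without noting that the pair $(10,N+6)$ must be excluded (it is, via the AP $0,5,10$, but you do not say so). So in both cases the terminal contradiction is asserted rather than derived; supplying the two mod-AP exclusions above (and deleting Case II in favor of the observation $5\notin A$) would bring your argument in line with the paper's proof.
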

\begin{proof}
Suppose $N+4\in A$. Then $8,12,14$ are mod-covered and $4,5,6,7\not\in A$. We need $1\in A$ to mod-cover $7$ and thus $2,7,11,13$ are mod-covered and $N-2\not\in A$. Therefore we need $3\in A$ to mod-cover $6$ which implies $5,6,9$ are mod-covered and $N-1\not\in A$. Lastly, we need $10\in A$ to mod-cover $4$. Thus, $S(A)=S(0,1,3,10,15)$. This is a contradiction since we cannot cover $2N-5$.
\end{proof}

\begin{lemma}\label{lemma:char15p}
There does not exist a modular set $A$ modulo $2N$ with $\lambda(A)=15$ with $N+6\in A$ and $N+1,N+2,N+3,N+4,N+5\not\in A$.
\end{lemma}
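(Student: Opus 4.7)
The plan is to proceed exactly as in the preceding lemmas of this subsection: assume such a modular set $A$ exists, propagate forced memberships and non-memberships in $A$ using the modular-set definition together with Remark \ref{remark:mod2N}, and derive a contradiction either from an element that cannot be mod-covered or, once $S(A)$ has been pinned down through Lemma \ref{lemma:restriction}, from the requirement that each non-member $2N-k$ with $k$ small be \emph{covered} (not merely mod-covered).

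First I would record the immediate consequences of the hypotheses: since $\lambda(A)=15$ we have $\max(A)=N+7$, and combined with $0, N+6, N+7\in A$ Remark \ref{remark:mod2N} forces $N, 6, 7\notin A$, while the hypothesis adds $N+1,N+2,N+3,N+4,N+5\notin A$. In particular, $12$ is mod-covered by $0,N+6$ and $14$ by $0,N+7$. Using the list of forbidden elements, a short enumeration shows that the only admissible mod-covers of $7$ are $(1,4,7)$, $(3,5,7)$, and $(5,N+6,7)$, so either $5\in A$ or $\{1,4\}\subset A$. I would split the argument on this alternative (possibly with further subcases), and in each case propagate through the small elements in order: add to $A$ any element that has no mod-cover available from the currently fixed data, and forbid any element whose inclusion would create a $3$-AP modulo $2N$ with existing members. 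This procedure typically determines the initial segment of $S(A)$ uniquely, after which Lemma \ref{lemma:restriction} identifies all of $S(A)$; in some subcases the identified sequence will be a known modular sequence of character $\ne 15$, giving an immediate contradiction (as happened in Lemmas \ref{lemma:char15h} and \ref{lemma:char15l}).

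The main obstacle, which is the closing step in the remaining subcases, is to examine how each non-member $2N-k$ for small $k$ is covered. Any cover $(x,y)$ of $2N-k$ with $x<y$ and $x,y\in A$ must have $y\in\{N+6,N+7\}$ or else $y$ in a narrow middle window of the form $N-O(1)$; the two large choices of $y$ determine $x=2y-(2N-k)$ explicitly, while the $3$-AP constraints modulo $2N$ against $0, N+6, N+7$ and the small elements already determined to be in $A$ eliminate most of the admissible middle values of $y$. Following the same bookkeeping as in Lemmas \ref{lemma:char15m}, \ref{lemma:char15n}, and \ref{lemma:char15o}, I expect some specific $2N-k$ with $k$ between $3$ and $12$ to admit no valid cover at all, completing the contradiction and hence the proof.
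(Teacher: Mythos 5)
Your overall strategy is the right one and is in fact the same as the paper's: the opening deductions are correct ($\max(A)=N+7$; $6,7,N\notin A$; $12,14$ mod-covered), and your enumeration of the possible mod-covers of $7$ --- namely $(1,4)$, $(3,5)$, $(5,N+6)$ --- correctly yields the dichotomy ``$5\in A$ or $\{1,4\}\subset A$,'' which is exactly the Case I / Case II split used in the paper. However, what you have written is a plan, not a proof: the entire content of this lemma lies in carrying out the propagation in each case and exhibiting the specific element that cannot be (mod-)covered, and your closing sentence (``I expect some specific $2N-k$ with $k$ between $3$ and $12$ to admit no valid cover'') simply asserts the conclusion you are supposed to verify. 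Nothing in your write-up rules out the possibility that every subcase propagates to a consistent modular set; that is precisely what must be checked, element by element.

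Concretely, the missing work is the following. In Case I ($5\in A$, so $9,10$ are mod-covered) one must split further on $4\in A$, $3\in A$, or $3,4\notin A$: if $4\in A$ one is forced to take $11\in A$ to mod-cover $3$, and then $2$ has no mod-cover; if $3\in A$ one is forced to $13\in A$, then $2\in A$, pinning down $S(A)=S(0,2,3,5,13,15)$ via Lemma \ref{lemma:restriction}, and the contradiction is that $2N-5$ cannot be covered; if $3,4\notin A$ one is forced to $8\in A$ to mod-cover $6$, and then $3$ has no mod-cover. In Case II ($1,4\in A$) one is forced to $3\in A$ to mod-cover $6$, giving $S(A)=S(0,1,3,4,15)$; then covering $2N-8$ and $2N-9$ forces $N-2,N-3\in A$, and the contradiction is that $2N-14$ cannot be covered. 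Note also that two of the four terminal contradictions concern small elements ($2$ and $3$) rather than elements of the form $2N-k$, and none of the subcases here terminates by recognizing a known sequence of different character, so your description of where the contradictions will come from is only partially accurate. Until you perform this casework explicitly (or give a genuinely different argument that avoids it), the lemma is not established.
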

\begin{proof}
Suppose that such a modular set $A$ exists. Observe that $12,14$ are mod-covered and $6,7,N-7,N-6\not\in A$. We break our proof up into cases where either (Case I) $5\in A$ or (Case II) $5\not\in A$.

\medskip

Case I: Since $5\in A$ then $7,9,10$ are mod-covered. We then break this case up into the subcases where (Case I.1) $4\in A$, (Case I.2) $3\in A$, or (Case I.3) $3,4\not\in A$.

Case I.1: Since $4\in A$ then $6,8$ are mod-covered and $2,3\not\in A$. We need $11\in A$ to mod-cover $3$ which implies $1$ is mod-covered. This is a contradiction since there is no way to mod-cover $2$.

Case I.2: Since $3\in A$ then $6,11$ are mod-covered and $1,4,N-1\not\in A$. We need $13\in A$ to mod-cover $1$ which then implies that $8\not\in A$. We require $2\in A$ to mod-cover $4$ and $8$ which then implies $N-2\not\in A$. Therefore, $S(A)=S(0,2,3,5,13,15)$. This is a contradiction since there is no way to cover $2N-5$.

Case I.3: Since $3,4\not\in A$, we require $8\in A$ to mod-cover $6$. Therefore, $4,11$ are also mod-covered and $2\not\in A$. This is a contradiction because there is no way to mod-cover $3$.

\medskip

Case II: Since $5\not\in A$, we require $1,4\in A$ to mod-cover $7$. Therefore, $2,8,10,11,13$ are also mod-covered and $N-5,N-4\not\in A$. One needs $3\in A$ to mod-cover $6$ which implies that $5,9$ are also mod-covered and $N-1\not\in A$. Therefore, $S(A)=S(0,1,3,4,15)$. We need $N-2\in A$ to cover $2N-8$ and $N-3\in A$ to cover $2N-9$. This is a contradiction since there is no way to cover $2N-14$.

\medskip

Therefore, there does not exist a modular set $A$ modulo $2N$ with $\lambda(A)=15$ such that $N+6\in A$ and $N+1,N+2,N+3,N+4,N+5\not\in A$.
\end{proof}

Observe that Lemmas \ref{lemma:char15o} and \ref{lemma:char15p}, along with previous results, imply that $N+6\not\in A$ for a modular set $A$ modulo $2N$ with character $\lambda(A)=15$.

\begin{lemma}\label{lemma:char15q}
There does not exist a modular set $A$ modulo $2N$ with $\lambda(A)=15$ with $N+4\in A$ and $N+1,N+2,N+3,N+5,N+6\not\in A$.
\end{lemma}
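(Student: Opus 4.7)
The plan is to follow the same case-analysis template as the preceding lemmas in this subsection. Since $\lambda(A)=15$ gives $\max(A)=N+7$, I start with $0,N+4,N+7\in A$ and, by Remark \ref{remark:mod2N}, $4,7\not\in A$; the mod-APs $(0,N+4,8)$, $(0,N+7,14)$, and $(N+4,N+7,N+10)$ immediately force $8,14,N+10\not\in A$. The argument then turns on how $7$ must be mod-covered: enumerating pairs $(x,y)\in A^2$ with $x<y\le N+7$ and $2y-x\equiv 7\pmod{2N}$, the hypothesis $N+1,N+2,N+3,N+5,N+6\not\in A$ leaves only $(x,y)\in\{(3,5),(5,6),(1,N+4)\}$.

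The first two cases should dispatch quickly. If $3,5\in A$, then $\{3,5,N+4\}$ is a mod-AP via $2(N+4)-3\equiv 5\pmod{2N}$, contradicting 3-freeness. If $5,6\in A$ with $1,3\not\in A$ (here $2\not\in A$ via the mod-AP $(2,N+4,6)$ and $3\not\in A$ via $(3,N+4,5)$), then mod-covering $1$ leaves only the pair $(13,N+7)$, forcing $13\in A$; subsequently mod-covering $4$ leaves only $(10,N+7)$, forcing $10\in A$; but $\{0,5,10\}$ is then a mod-AP, a contradiction.

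The substantive work lies in the case $1\in A$. I intend to push the following chain of forcings: $2\not\in A$ (AP $0,1,2$), $13\not\in A$ (mod-AP $(13,N+7,1)$), then $3\in A$ (the only surviving way to mod-cover $6$, since $2$ and $8$ are out), whence $5,6,11\not\in A$ (the APs/mod-APs with $1,3,N+7$), then $9,10,12\in A$ (each otherwise unmod-coverable), and finally $16\in A$ (no middle element of $A$ in $(12,N+4)$ can mod-cover $16$ — the candidates $13,14,15$ are forbidden, and $\omega(A)<15$ rules out leaving $16$ uncovered). This pins down $A\cap\{0,1,\dots,16\}=\{0,1,3,9,10,12,16\}$. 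The key new observation is that $\{3,N-1,N+7\}$ is a mod-AP, since $2\cdot 3-(N-1)=7-N\equiv N+7\pmod{2N}$; together with $3,N+7\in A$, this rules out $N-1\in A$.

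The closing step is to show $2N-3$ cannot be covered. A pair $(x,y)\in A^2$ with $2y-x=2N-3$ and $x<y$ must have $y\in A\cap[N-1,N+7]$. By hypothesis $N+1,N+2,N+3,N+5,N+6\not\in A$; I have just shown $N-1\not\in A$; and $N\not\in A$ by the mod-AP $(0,N,0)$. The remaining candidates are $y=N+4$ (requiring $x=11$, but $\{9,10,11\}$ is an AP, so $11\not\in A$) and $y=N+7$ (requiring $x=17$, but $\{1,9,17\}$ is an AP, so $17\not\in A$). Hence $2N-3$ has no cover, contradicting $\omega(A)<15<2N-3$. The main obstacle will be executing the forcing chain in the $1\in A$ case without missing a mod-cover possibility — particularly at the $16\in A$ step, where one must rule out every intermediate $y\in A\cap(12,N+4)$ — and then spotting the mod-AP $\{3,N-1,N+7\}$ that closes the coverage argument.
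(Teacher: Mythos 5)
Your setup, the enumeration of the mod-covers of $7$ as $(3,5)$, $(5,6)$, $(1,N+4)$, the quick disposal of the first two possibilities when $1\not\in A$, and the closing argument (the mod-AP $\{N-1,3,N+7\}$ ruling out $N-1$, then the impossibility of covering $2N-3$ once $A\cap\{0,\dots,16\}=\{0,1,3,9,10,12,16\}$ is pinned down) are all correct and essentially coincide with the paper's proof, which also splits on $1\in A$ versus $1\not\in A$ and ends in the same set and the same $2N-3$ contradiction. However, there is a genuine gap in your forcing chain for the case $1\in A$: you conclude ``$3\in A$ (the only surviving way to mod-cover $6$),'' but this inference is only valid once you know $6\not\in A$, and at that point in your chain you have not excluded $6$ from $A$; you only derive $6\not\in A$ afterwards, from $3\in A$ via the AP $0,3,6$, which makes the step circular. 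The correct dichotomy is ``either $6\in A$ or $6$ is mod-covered,'' so the subcase $1,6\in A$ (with $3\not\in A$) must be handled separately, and as written your proof silently skips it.

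The omission is repairable. If $1,6\in A$, then $3,11,12\not\in A$ (APs $0,3,6$; $1,6,11$; $0,6,12$), the only mod-cover of $4$ is $(10,N+7)$, forcing $10\in A$, whence $5\not\in A$ by the AP $0,5,10$; but then $3$ is neither in $A$ nor mod-coverable, since the only candidate pairs $(1,2)$, $(5,N+4)$, $(11,N+7)$ are all unavailable --- a contradiction. Alternatively, you can avoid the issue entirely by reordering the forcings as the paper does: first force $10\in A$ as the unique mod-cover of $4$ (here $4\not\in A$ is already known from Remark \ref{remark:mod2N}, so the dichotomy is legitimate), then $5\not\in A$ from $0,5,10$, then $9\in A$ and $3\in A$ because neither can be mod-covered, after which $6$ is mod-covered by $0,3$ and the rest of your chain ($12,16\in A$, $N-1\not\in A$, no cover of $2N-3$) goes through unchanged.
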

\begin{proof}
Suppose such a set $A$ exists. Observe that $8,14$ are mod-covered and $4,7\not\in A$. We break our proof up into the cases where either (Case I) $1\in A$ or (Case II) $1\not\in A$.

\medskip

Case I: Since $1\in A$, we see that $2,7,13$ are mod-covered. We need $10\in A$ to mod-cover $4$ which implies that $5\not\in A$. We see $9\in A$ since it cannot be mod-covered which implies $5,11$ are mod-covered. We have $3\in A$ since it cannot be mod-covered which implies $6$ is mod-covered and $N-1\not\in A$. Lastly, $12\in A$ since it cannot be mod-covered. Therefore, $S(A)=S(0,1,3,9,10,12,16)$. This is a contradiction since one cannot cover $2N-3$.

\medskip

Case II: Since $1\not\in A$, we need $13\in A$ to mod-cover $1$. We need $5\in A$ to mod-cover $7$ which implies $3,9,10$ are mod-covered. Therefore we need $6\in A$ to mod-cover $7$ which implies $2,12$ are also mod-covered. This is a contradiction since one cannot mod-cover $4$.

\medskip

Therefore, there does not exist a modular set $A$ modulo $2N$ with $\lambda(A)=15$ such that $N+4\in A$ and $N+1,N+2,N+3,N+5,N+6\not\in A$.
\end{proof}

\begin{lemma}\label{lemma:char15r}
There does not exist a modular set $A$ modulo $2N$ with $\lambda(A)=15$ with $N+1,N+2,N+3,N+4,N+5,N+6\not\in A$.
\end{lemma}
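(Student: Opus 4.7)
The approach is to mirror the structure of Lemma \ref{lemma:char11f}: first pin down the forced elements of $A$, then case-split on the remaining freedom, compute $S(A)$ explicitly in each case via Lemma \ref{lemma:restriction}, and finally derive a contradiction either from the impossibility of covering some $2N-k$ or from $S(A)$ turning out to be a known Stanley sequence with character $\neq 15$.

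First I would observe that $\max(A) = N+7 \in A$, that $N \notin A$ by Remark \ref{remark:mod2N}, and hence $A \subseteq \{0, 1, \ldots, N-1\} \cup \{N+7\}$, while $14$ is mod-covered by $(0, N+7)$. I would next force $15 \in A$: since $2N-1 > \omega(A)$ it must be actually covered by some $x < y$ in $A$ with $2y - x = 2N - 1$, and the only choice consistent with the above is $y = N+7$, $x = 15$. The same pair $(0, N+7)$ forces $N-7 \notin A$ via the mod-AP $N-7, 0, N+7$.

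Next I would case-split on how $7$ is mod-covered: either $7 \in A$, or one of the pairs $\{1,4\}, \{3,5\}, \{5,6\}$ is contained in $A$, these being the only solutions of $2y-x = 7$ with $0 \le x < y \le 6$ (the alternative $2y - x = 2N+7$ with $y \le N+7$ requires $y = N+7, x = 7$, which already puts $7 \in A$). Inside each branch I would run the usual propagation: for $1 \le j \le 13$ the only mod-cover of $j$ using $N+7$ is via the pair $(14-j, N+7)$, so mod-covering $j$ either places $j$ itself in $A$, places $14-j$ in $A$, or uses a pair from $\{1, \ldots, 13\}$. These constraints interlock and rapidly pin down the small elements.

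Once enough small elements are fixed, Lemma \ref{lemma:restriction} writes $S(A) = S(0, a_1, \ldots, a_m)$ with $a_m \le 15$, after which I would examine the covers of $2N-1, 2N-2, \ldots$. Each such cover requires either an element of $A$ equal to $15 + k$ or some $N - j \in A$ with $1 \le j \le 6$ (since $N-7 \notin A$), and including any $N - j$ in $A$ must be cross-checked against mod-APs it forms with the already-pinned small elements, with $0$, and with $N+7$. The main obstacle is simply the breadth of the case analysis: because only $N+7$ sits near the top of $A$, coverage near $2N$ leans heavily on elements near $N$, each of which creates its own forbidden configurations, making this the longest casework in the paper even though each individual step is a routine application of the techniques developed in the earlier lemmas.
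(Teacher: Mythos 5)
Your proposal lays out the right strategy --- it is essentially the paper's own (force a few small elements, split into cases on how $7$ is mod-covered, pin down $A\cap\{0,\dots,15\}$, invoke Lemma \ref{lemma:restriction}, then derive a contradiction from covering $2N-1,2N-2,\dots$) --- and your preliminary deductions are correct: since $N,N+1,\dots,N+6\not\in A$ and $\max(A)=N+7$, covering $2N-1$ does force $15\in A$, the pair $0,N+7$ does exclude $N-7$, and the only small pairs with $2y-x=7$ are $\{1,4\},\{3,5\},\{5,6\}$ (your extra case $7\in A$ is vacuous, since $N+7\in A$ and Remark \ref{remark:mod2N} give $7\not\in A$ immediately). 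But the proposal stops exactly where the lemma's actual content begins. For this statement the strategy is boilerplate shared with the ten preceding lemmas; the proof \emph{is} the completed propagation and the verification that every branch dies. You never carry a single branch to a contradiction: phrases like ``these constraints interlock and rapidly pin down the small elements'' and ``I would examine the covers of $2N-1,2N-2,\ldots$'' assert the outcome rather than establish it, and it is not a priori clear that each branch terminates in a contradiction rather than in a consistent modular set (indeed, in the paper several branches terminate not with an uncoverable $2N-k$ but by identifying $S(A)$ with a known sequence of character $24$, a possibility your sketch mentions only in passing and never checks).

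Concretely, the paper splits on whether $5\in A$, then subdivides on $3$, $4$, $6$, and in each of the resulting cases ($S(0,1,4,6,9,12,16)$, $S(0,2,3,5,13,15)$, the case $6\in A$, $3\not\in A$, and $S(0,1,4,5,11,12,15)$, among others) it exhibits a specific element near $2N$ --- e.g.\ $2N-1$, $2N-3$, or $2N-10$ after forcing $N-1,N-2\in A$ and $N-3\not\in A$ --- that cannot be covered, or shows the sequence has character $\ne 15$. Those terminal verifications, including the auxiliary arguments about which $N-j$ may be adjoined without creating mod-APs with the already-fixed small elements, are the substance of the lemma, and they are absent from your write-up. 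As it stands the proposal is a correct plan of attack, not a proof; to be acceptable it must execute the case tree to completion (or replace it with some genuinely different argument that avoids the casework, which you do not offer).
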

\begin{proof}
Suppose such a set $A$ exists. Then $14$ is mod-covered and $7\not\in A$. We break our argument into the case where either (Case I) $5\not\in A$ or (Case II) $5\in A$.

\medskip

Case I: If $5\not\in A$ then $1,4\in A$ in order to cover $7$. Therefore, $2,7,8,10,13$ are mod-covered by $A$. We now break this case up into the subcases where (Case I.1) $3\in A$ and (Case I.2) $3\not\in A$.

Case I.1: If $3\in A$, then $5,6,11$ are mod-covered by $A$. We see that $9,12\in A$ since they cannot be mod-covered. Therefore, $S(A)=S(0,1,4,6,9,12,16)$. This is a contradiction since there is no way to cover $2N-1$.

Case I.2: If $3\not\in A$ then we need $11\in A$ to mod-cover $3$ which implies $6\not\in A$. This is a contradiction because there is no way to mod-cover $6$.
 
 \medskip
 
Case II: Suppose $5\in A$. Then $9,10$ are mod-covered by $A$. We break this case up into the subcases where (Case II.1) $3\in A$, (Case II.2) $6\in A$, $3\not\in A$, and (Case II.3) $3,6\not\in A$.

Case II.1: Suppose $3\in A$. Then $6,7,11$ are mod-covered and $1,4,N-1\not\in A$. We need $13\in A$ to mod-cover $1$. We see that $2\in A$ in order to mod-cover $4$ and $8,12$ are mod-covered as well. Therefore, $S(A)=S(0,2,3,5,13,15)$. This is a contradiction since there is no way to cover $2N-3$.

Case II.2: Suppose $6\in A$ and $3\not\in A$. Then $7,8,12$ are mod-covered and $4\not\in A$. We see that $2\in A$ in order to cover $4$ and therefore $1\not\in A$. Thus, $11\in A$ in order to mod-cover $3$ and $13\in A$ in order to mod-cover $1$. Hence, $S(A)=S(0,3,5,6,11,13,18)$. This is a contradiction since there is no way to mod-cover $2N-1$.

Case II.3: Suppose $3,6\not\in A$. Therefore, $1,4\in A$ in order to cover $7$. Thus, $2,6,7,8,13$ are mod-covered and $N-5\not\in A$. We see $11\in A$ in order to mod-cover $3$ and $12\in A$ since it cannot be mod-covered. Thus, $S(A)=S(0,1,4,5,11,12,15)$. We need $N-1\in A$ to cover $2N-3$ and $N-2\in A$ to cover $2N-4$. Therefore, $N-3\not\in A$. This is a contradiction because there is no way to cover $2N-10$.

\medskip

Therefore, there does not exist a modular set $A$ modulo $2N$ with $\lambda(A)=15$ such that $N+1,N+2,N+3,N+4,N+5,N+6\not\in A$.
\end{proof}

\section{Future Directions}
Though Theorem \ref{thm:main} shows that $\lambda(A)\not\in \{1,3,5,9,11,15\}$ for all independent Stanley sequences $S(A)$, it does not show that every character value $\lambda\in \N_0\backslash\{1,3,5,9,11,15 \}$ is achieved by an independent Stanley sequence. In order to prove Conjecture \ref{conj:char}, one still needs to show that an independent Stanley sequences with character $\lambda$ exists for every $\lambda\in\N_0\backslash\{1,3,5,9,11,15 \}$. Sawhney \cite{S17} has recently shown a large subset of even numbers are characters of independent Stanley sequences. However, the case of odd character is still completely open.

\section{Acknowledgements}
The author would like to thank Joe Gallian and David Rolnick for their helpful comments on preliminary drafts of this paper.

\bibliography{oddcharacterspaper} 
\bibliographystyle{plain}

\end{document}